\let\SavedRightarrow=\Rightarrow
\let\Rightarrow=\SavedRightarrow
\newenvironment{itemizz}{\begin{itemize}\setlength{\itemsep}{-1mm}}
{\end{itemize}}
\newenvironment{enum}{\begin{enumerate}\setlength{\itemsep}{-1mm}}
{\end{enumerate}}
\newtheorem{theorem}{Theorem}[section]
\newtheorem{definition}[theorem]{Definition}
\newtheorem{lemma}[theorem]{Lemma}
\newtheorem{example}[theorem]{Example}
\newtheorem{proposition}[theorem]{Proposition}
\newtheorem{question}[theorem]{Question}
\newcommand\RRR{\mathbb {R}}
\newcommand\CCC{\mathbb {C}}
\newcommand\PPP{\mathbb {P}}
\newcommand\ZZZ{\mathbb {Z}}
\newcommand\QQQ{\mathbb {Q}}
\newcommand\PP{\mathcal {P}}
\newcommand\FF{\mathcal {F}}
\newcommand\HH{\mathcal {H}}
\newcommand\NN{\mathcal {N}}
\newcommand\WW{\mathcal {W}}
\newcommand\MM{\mathcal {M}}
\newcommand\Ups{\Upsilon}
\newcommand\twist{\mathrm{twist}}  
\newcommand\tw{\mathrm{tw}}  
\newcommand\dom{\mathrm{dom}}  
\newcommand\cl{\mathrm{cl}}  
\newcommand\ran{\mathrm{ran}}   
\newcommand\MA{\mathrm{MA}}  
\newcommand\PFA{\mathrm{PFA}}  
\newcommand\ZFC{\mathrm{ZFC}}  
\newcommand\CH{\mathrm{CH}}  
\newcommand\GCH{\mathrm{GCH}}  
\newcommand\hgt{\mathrm{ht}}   
\newcommand\SO{\mathrm{SO}}   
\newcommand\cccc{\mathfrak{c}}
\newcommand{\pref}[1]{(\ref{#1})}
\newcommand\res{\mathord {\upharpoonright}}  
\newcommand\iv{^{-1}} 
\newcommand\one{\mathbbm{1}} 
\newcommand\compat{\mathrel{\not\perp}}  
\newcommand\eop{{\Large \Coffeecup}}  
\newenvironment{proof}{{\bf Proof.}}{\eop\medskip}
\newenvironment{proofof}[1]{\medskip \textbf{Proof of #1.}}{\eop\medskip}
\begin{document}

\title{Homeomorphisms with Small Twist%
\footnote{
2010 Mathematics Subject Classification:
Primary  03E35.
Key Words and Phrases: 
PFA, homeomorphism, absolutely continuous.
}}

\author{Kenneth Kunen\footnote{University of Wisconsin,  Madison, WI  53706, U.S.A.,
\ \ kunen@math.wisc.edu} }

\maketitle

\begin{abstract}
We extend Baumgartner's result on isomorphisms of $\aleph_1$--dense
subsets of $\RRR$ in two ways:  First, the function can be made
to be absolutely continuous.  Second, one can replace $\RRR$ by $\RRR^n$.
\end{abstract}

\section{Introduction}
\begin{definition}
For any topological space $X$, $\HH(X)$ denotes the set of
all homeomorphisms from $X$ onto $X$,
and a subset
$A \subseteq X$ is \emph{$\kappa$--dense} \textup(in $X$\textup)
iff $|A \cap U| = \kappa$
for all non-empty open $U \subseteq X$.
\end{definition}

Then, for $X = \RRR$, we have

\begin{theorem}
\label{thm-baum-cant}
{$  $}
\begin{itemizz}
\item[a.] If $D,E $ are $\aleph_0$--dense in $\RRR$,
then there is an $f \in \HH(\RRR)$ such that $f(D)  = E$.
\item[b.] Assuming $\PFA$, if $D,E $ are $\aleph_1$--dense in $\RRR$,
then there is an $f \in \HH(\RRR)$ such that $f(D)  = E$.
\end{itemizz}
\end{theorem}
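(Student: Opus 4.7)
Part (a) is a classical back-and-forth argument. Enumerating $D = \{d_n : n \in \omega\}$ and $E = \{e_n : n \in \omega\}$, I would inductively build a chain $f_0 \subseteq f_1 \subseteq \cdots$ of finite strictly increasing partial bijections from $D$ to $E$. At stage $2n$ I would extend the domain to include $d_n$, and at stage $2n+1$ I would extend the range to include $e_n$; the density of $D$ and $E$ in $\RRR$ guarantees that the required image (or preimage) can always be chosen inside the open interval forced by order-preservation. The union $f = \bigcup_n f_n$ is an order-isomorphism $D \to E$, and the formula $\bar f(x) = \sup\{f(d) : d \in D,\, d < x\}$ extends $f$ to a strictly increasing continuous bijection $\RRR \to \RRR$, i.e., an element of $\HH(\RRR)$ with $\bar f(D) = E$.

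For part (b), a naive transfinite back-and-forth of length $\omega_1$ breaks down at limit stages (the ``what remains to match up'' is no longer a nice pair of $\aleph_1$-dense sets), so I would pass to forcing. Let $P$ be the poset whose conditions are finite strictly increasing partial maps $p : D \to E$, ordered by reverse inclusion. For each $d \in D$ the set $\Delta_d = \{p : d \in \dom(p)\}$ is dense in $P$ (use the $\aleph_1$-density of $E$ inside the open interval prescribed by $p$ together with $d$), and symmetrically each $\Delta^e = \{p : e \in \ran(p)\}$ is dense. A filter $G \subseteq P$ meeting all $\aleph_1$ of these dense sets yields $\bigcup G$, an order-isomorphism $D \to E$, which extends to a homeomorphism of $\RRR$ exactly as in part (a).

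The main obstacle is thus showing that $P$ is proper, so that $\PFA$ may be applied to this family of dense sets. Given a countable elementary submodel $M \prec H(\theta)$ with $P,D,E \in M$ and a condition $p \in P \cap M$, one must construct an $(M,P)$-generic extension $q \leq p$; equivalently, for every maximal antichain $A \in M$ of $P$, the condition $q$ is compatible with some element of $A \cap M$. Baumgartner's combinatorial heart is to place new pairs from $(D \times E) \setminus M$ into the ``gaps'' of $p$ in such a way that the resulting extension still fits into densely many conditions of $A$ visible to $M$; the $\aleph_1$-density of both $D$ and $E$ supplies the flexibility to find such pairs, while the finiteness of conditions keeps the compatibility check manageable. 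I expect this properness step to absorb nearly all the work; once it is completed, $\PFA$ delivers the generic filter, and the extension procedure from (a) finishes the proof.
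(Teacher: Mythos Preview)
The paper does not prove this theorem; it is quoted as background, with (a) attributed to Cantor and (b) to Baumgartner, together with a one-sentence description of Baumgartner's method. Your treatment of (a) is the standard back-and-forth and is fine.

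For (b), your plan diverges from what the paper (and Baumgartner) actually does, and the divergence is exactly where your sketch goes thin. The paper does \emph{not} try to show that the naive poset of finite increasing partial bijections is proper. Instead it uses the ``collapse the continuum trick'': first collapse $\cccc$ to $\aleph_1$ by a proper forcing, so that $\CH$ holds in the intermediate model; then force with a \emph{modified} poset (the paper's $\PPP_0$, Definition~\ref{def-poset-approx}) carrying height side conditions (P3)(P4) tying each pair $(d,e)$ to a fixed continuous chain of countable elementary submodels. Under $\CH$ this modified poset is shown to be ccc (Lemma~\ref{lemma-elem-ccc}), so the two-step iteration is proper and $\PFA$ delivers the filter. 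The height conditions are not cosmetic: the ccc argument in Lemma~\ref{lemma-elem-ccc} uses them in an essential way to locate the ``free'' coordinate at each stage of the induction.

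Your alternative --- direct properness of the bare poset --- is not actually argued. The sentence about ``placing new pairs from $(D\times E)\setminus M$ into the gaps so the extension still fits into densely many conditions of $A$ visible to $M$'' is a description of what a master condition must accomplish, not a construction of one; you have given no mechanism for why such pairs can be found compatibly across all antichains in $M$. Without a device like the height stratification above (or models-as-side-conditions in the Todor\v{c}evi\'c style), there is no evident reason this succeeds, and Baumgartner himself did not proceed this way. As written, the properness step is a genuine gap rather than a different but complete route.
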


Here, (a) is a classical result of Cantor, while (b)
is due to Baumgartner \cite{Ba1,Ba2}.
In both cases, the proof obtains an order isomorphism $h$ from
$D$ onto $E$, which must then extend to a unique $f \in \HH(\RRR)$.
In (b), Baumgartner's original proof \cite{Ba1} predates $\PFA$;
he simply showed that the result of the theorem,
together with $\MA + \cccc = \aleph_2$, can be
obtained by iterated ccc forcing over any model of $\ZFC + \GCH$.
Using his forcing, the $\PFA$ result is immediate by
the ``collapse the continuum trick'' (see \cite{Ba2}) ; similar remarks hold
for our uses of $\PFA$ in this paper.

By Avraham and  Shelah \cite{AS},
the result in (b) does not follow from $\MA + \cccc=\aleph_2$ alone.

In this paper, we assume $\PFA$ and prove two extensions of (b).
First, we show that both $f$ and $f\iv$ can be made to be 
absolutely continuous (AC).  Absolute continuity for real-valued
functions is discussed below, and in many analysis texts,
such as Rudin \cite{Rud}.
It is easily seen (Example \ref{ex-not-ac} below)
that Baumgartner's forcing
yields an $f$ such that neither $f$ nor $f\iv$ is AC.
If $f$ is Lipschitz 
(\,$\forall x,z \, [\, |f(x) - f(z)| \le C |x - z |\, ]$\,),
then $f$ must be AC, but one cannot improve (b) to make
$f$ and $f\iv$ Lipschitz; a $\ZFC$
counter-example is described in \cite{Kunen2}, although this example is
implicit in the earlier \cite{ARS}.
Note that in (a), it is easy to make $f$ and $f\iv$ Lipschitz, and
also real-analytic; this seems to have been done first 
by Franklin \cite{Franklin} in 1925.

Our second extension of (b) replaces $\RRR$ by $\RRR^n$.
One such extension is already known, and is due to
Stepr\=ans and Watson \cite{SW}:

\begin{theorem}
\label{thm-st-wat}
For any infinite $\kappa$ and any finite $n \ge 2$,
$\MA(\kappa)$ implies that
if $D,E $ are $\kappa$--dense in $\RRR^n$,
then there is an $f \in \HH(\RRR^n)$ such that $f(D)  = E$.
\end{theorem}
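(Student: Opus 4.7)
My plan is to build a ccc forcing poset $\PP$ whose generic filter produces a piecewise-linear homeomorphism $f \in \HH(\RRR^n)$ with $f(D) = E$, and then apply $\MA(\kappa)$. A condition is a pair $(p, h)$ where $h\colon K \to L$ is a piecewise-linear homeomorphism between two compact $n$-dimensional PL balls $K, L \subseteq \RRR^n$, and $p$ is a finite partial injection from $D$ into $E$ with $\dom(p) \subseteq K$, $\ran(p) \subseteq L$, and $h \res \dom(p) = p$ (viewing $p$ as a function). A condition $(p', h')$ extends $(p, h)$ iff $p' \supseteq p$, $K \subseteq K'$, and $h' \res K = h$.

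I would then verify three families of dense sets. (i) For each $d \in D$, the conditions with $d \in \dom(p)$ are dense: given $(p,h)$, first PL-enlarge $K$ to place $d$ in its interior (extending $h$ accordingly into a PL ball of $\RRR^n$), then pick $e \in E \setminus \ran(p)$ arbitrarily close to $h(d)$ using the $\kappa$-density of $E$, and finally adjust $h$ inside a small ball around $d$ so that $h(d) = e$. This last step uses crucially that in $\RRR^n$ with $n \ge 2$ a PL homeomorphism supported on a small open ball can move one interior point to any other nearby interior point, something impossible on $\RRR$. (ii) Symmetrically, for each $e \in E$, the conditions with $e \in \ran(p)$ are dense. (iii) For each $m \in \NNN$, the conditions with the closed ball of radius $m$ contained in both $K$ and $L$ are dense. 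Altogether this is $\kappa + \aleph_0 = \kappa$ dense sets, and family (iii) ensures $\dom(f) = \ran(f) = \RRR^n$ in the limit.

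The ccc property is the main work. A $\Delta$-system argument on the finite data (the pairs in $p$ together with the vertices of PL triangulations of $K$ and $L$) reduces the task to showing that any two conditions $(p_1, h_1)$ and $(p_2, h_2)$ sharing a common root $(p_0, h_0\colon K_0 \to L_0)$ admit a common extension. Here $n \ge 2$ is essential: outside $K_0$ and $L_0$ there is enough room in $\RRR^n$ that two PL extensions can be jointly realized, by first using a PL general-position argument to isotope the new polyhedral pieces of $h_1$ off those of $h_2$ (both in the domain and the range) and then gluing the resulting pieces along $h_0$. The analogous amalgamation fails outright in dimension one, which is precisely why Baumgartner's theorem needs forcing beyond $\MA$.

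The main obstacle I expect is making the PL amalgamation in the previous paragraph precise: formally gluing two PL homeomorphisms that agree on a common subpolyhedron into a single PL homeomorphism on a larger polyhedron, while remaining consistent with the finite point-data on both sides. I would handle this by induction on the combinatorial complexity of the symmetric difference of the two triangulations, using standard PL topology moves available in codimension $\ge 1$ (and hence in every dimension $n \ge 2$). Once ccc and density are established, $\MA(\kappa)$ yields a filter whose union provides the desired $f \in \HH(\RRR^n)$ with $f(D) = E$.
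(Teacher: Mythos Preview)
The paper does not prove this theorem; it is stated with attribution to Stepr\=ans and Watson \cite{SW} and used only as background, with no argument given. So there is nothing in the paper to compare against.

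That said, your proposal has a real gap in the density step. You require $h' \res K = h$ for an extension, so once $h$ is fixed on $K$ it is frozen there. But $D$ is $\kappa$--dense, so there are $\kappa$ many $d \in D$ in the interior of $K$, and for each such $d$ the value $h(d)$ is already determined; for a generic PL map $h$ this value will not lie in $E$. For any such $d$ there is \emph{no} extension $(p', h')$ with $d \in \dom(p')$, since that would force $p'(d) = h'(d) = h(d) \notin E$. Your argument silently assumes $d \notin K$: the ``enlarge $K$'' step is vacuous when $d$ is already inside, and the ``adjust $h$ in a small ball around $d$'' step is forbidden by your extension relation. The same obstruction kills family (ii). A standard cure is to carry only a finite simplicial skeleton of $h$ --- a simplicial isomorphism between complexes whose vertices lie in $D$ on the domain side and in $E$ on the range side --- so that nothing is frozen at points of $D$ that are not yet vertices.

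The ccc sketch also has a gap. A $\Delta$-system on vertex sets does not make $K_1 \cap K_2 = K_0$ as point sets: simplices with disjoint vertex sets can still overlap. Your proposed fix, isotoping $K_2 \setminus K_0$ off $K_1 \setminus K_0$, will move the points of $\dom(p_2) \setminus \dom(p_0)$; these must remain in $D$, and an ambient PL isotopy has no reason to send $D$ to $D$ (or $E$ to $E$ on the range side). With skeleton-style conditions as above the amalgamation becomes a combinatorial problem about extending two simplicial isomorphisms that agree on a common subcomplex, and that is where $n \ge 2$ is genuinely used --- but that is a different argument from the one you outlined.
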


This makes it appear that the result for  $\RRR^n$, for $n \ge 2$,
is ``easier'' than for $\RRR$.  When $\kappa = \aleph_1$,
we only need $\MA + \cccc=\aleph_2$, not $\PFA$.
When $\kappa = \aleph_2$ and $n=1$,
it is a well-known open question whether the result of Theorem \ref{thm-st-wat}
is even \emph{consistent} with $\cccc\ge\aleph_2$.

The ``easiness'' of $\RRR^n$ for $n \ge 2$ is explained by the
fact that $\RRR^n$ has ``more'' homeomorphisms than $\RRR$.
For example, every permutation of a finite
subset of $\RRR^n$ extends to some $f \in \HH(\RRR^n)$, while
this is clearly false for $n = 1$, since
every $f \in \HH(\RRR)$ is monotonic (either order-preserving
or order-reversing); in fact, the proofs of (a) and (b) 
in Theorem \ref{thm-baum-cant} produce order-preserving functions.
Now,  if we set $\kappa = \aleph_1$ and
demand that our $f$ in Theorem \ref{thm-st-wat} be 
``order-preserving'' (\emph{suitably defined}), then we do 
get a harder result that follows from $\PFA$ but not from
$\MA(\aleph_1)$.  As with the $n = 1$ results, we do not
know if there is any consistent version of our results
with $\kappa > \aleph_1$.

 But, what is the right definition of
``order-preserving''?  One possibility might be order-preserving on
each coordinate; i.e., for each $\vec x, \vec z \in \RRR^n$,
and each coordinate $i = 0,\ldots, n-1$:
$x_i < y_i$ iff $ f(x_i) < f(y_i)$ for all $i$.
But this is ``wrong'', in that there is a $\ZFC$ counter-example
in $\RRR^2$ (Example  \ref{ex-no-op}).
A ``correct'' definition, which leads to a $\PFA$ theorem,
involves the notion of \emph{twist}:

\begin{definition}
\label{def-angle}
For $\vec v, \vec w  \in \RRR^n \backslash \{ \vec 0 \}$:
\[
\angle(\vec v, \vec w) =
\arccos(\, (\vec v \cdot \vec w) / (\|\vec v\| \|\vec w\|)\,)
\in [0,\pi] \ \ .
\]
\end{definition}

So, we are thinking of $\vec v, \vec w$
as arrows pointing from the origin $\vec 0$,
and we are measuring the angle between them in the usual way.

\begin{definition}
\label{def-twist}
If $F \subseteq \RRR^n \times \RRR^n$, let 
\[
\twist(F) = \{ \angle( d_1 - d_0, e_1 - e_0 ) :
(d_0,e_0), (d_1,e_1) \in F
 \wedge
d_0 \ne d_1
 \wedge
e_0 \ne e_1
\} \ \ .
\]
Then, let $\tw(F) = \sup(\twist(F))$.
\end{definition}

In our applications, $F$ will usually be the graph of a bijection,
although $\dom(F)$ and $\ran(F)$ may be proper subsets of $\RRR^n$.

\begin{lemma}
\label{lemma-cl-tw}
For any $F \subseteq \RRR^n \times \RRR^n$:
$\twist(F) \subseteq [0,\pi]$, and
$\tw(F) \in [0,\pi]$, and
$\twist(\overline F) \subseteq \cl( \twist(F)) $, and
$\tw(\overline F) =  \tw(F) $.
\end{lemma}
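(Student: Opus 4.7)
The first two assertions are immediate. Since $\arccos$ maps $[-1,1]$ onto $[0,\pi]$ and the Cauchy--Schwarz inequality guarantees that $(\vec v \cdot \vec w)/(\|\vec v\|\|\vec w\|) \in [-1,1]$ whenever $\vec v, \vec w \ne \vec 0$, we have $\twist(F) \subseteq [0,\pi]$, and therefore $\tw(F) = \sup(\twist(F)) \in [0,\pi]$ (using the convention $\sup \emptyset = 0$, for the trivial case).

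The substantive claim is the inclusion $\twist(\overline F) \subseteq \cl(\twist(F))$. My plan is to fix $(d_0,e_0), (d_1,e_1) \in \overline F$ with $d_0 \ne d_1$ and $e_0 \ne e_1$, pick sequences $(d_0^{(k)}, e_0^{(k)}), (d_1^{(k)}, e_1^{(k)}) \in F$ converging (in $\RRR^n \times \RRR^n$) to $(d_0,e_0)$ and $(d_1,e_1)$ respectively, and argue that the angles computed from the approximating pairs lie in $\twist(F)$ and converge to $\angle(d_1 - d_0, e_1 - e_0)$. The two key observations here are: (i) the conditions $d_0 \ne d_1$ and $e_0 \ne e_1$ are open, so they persist for all sufficiently large $k$, meaning that $\angle(d_1^{(k)} - d_0^{(k)}, e_1^{(k)} - e_0^{(k)})$ is defined and belongs to $\twist(F)$; and (ii) on the set of pairs of nonzero vectors, the function $(\vec v,\vec w) \mapsto \angle(\vec v,\vec w)$ is continuous, because the map $(\vec v,\vec w) \mapsto (\vec v \cdot \vec w)/(\|\vec v\|\|\vec w\|)$ is continuous away from the zero vectors and $\arccos$ is continuous on $[-1,1]$. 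Hence the limiting value $\angle(d_1 - d_0, e_1 - e_0)$ lies in $\cl(\twist(F))$.

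Finally, the equality $\tw(\overline F) = \tw(F)$ splits into two inequalities. One direction is immediate from $F \subseteq \overline F$, which gives $\twist(F) \subseteq \twist(\overline F)$ and therefore $\tw(F) \le \tw(\overline F)$. The reverse inequality follows from the previous paragraph, together with the elementary fact that $\sup(S) = \sup(\cl(S))$ for any bounded $S \subseteq \RRR$: indeed, $\tw(\overline F) = \sup(\twist(\overline F)) \le \sup(\cl(\twist(F))) = \sup(\twist(F)) = \tw(F)$.

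The only step that requires real care is the continuity argument in the second paragraph, and that care amounts just to noting that the hypotheses $d_0 \ne d_1$, $e_0 \ne e_1$ in Definition \ref{def-twist} guarantee the approximating difference vectors are eventually nonzero, so the potential singularity of $\angle$ at the origin never actually comes into play.
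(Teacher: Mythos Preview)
Your proof is correct. The paper itself states this lemma without proof, treating it as an easy observation; your argument supplies exactly the routine verification one would expect, via continuity of $(\vec v,\vec w)\mapsto\angle(\vec v,\vec w)$ on pairs of nonzero vectors and the fact that $\sup S=\sup\cl(S)$ for bounded $S\subseteq\RRR$.
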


When $n = 1$, $\twist(F) \subseteq \{0,\pi\}$, and a bijection
$F$ is strictly increasing (i.e., order-preserving) iff $\tw(F) = 0$.

Then we shall prove

\begin{proposition}
\label{prop-intro}
Assume $\PFA$.  Fix $\theta > \pi/2$ and $\aleph_1$--dense
$D,E \subset \RRR^n$.  Then there
is an $f \in \HH(\RRR^n)$
such that $f(D) = E$ and
$\tw(f) \le \theta$.
\end{proposition}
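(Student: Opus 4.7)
The plan is to adapt Baumgartner's $\PFA$ argument by designing a proper forcing poset $\mathbb P$ whose generic object is a bijection $D \to E$ with controlled twist. Fix an intermediate $\theta_0$ with $\pi/2 < \theta_0 < \theta$. A condition $p \in \mathbb P$ would consist of a finite partial bijection $p_0 \subset D \times E$ together with a piecewise linear homeomorphism $h_p \in \HH(\RRR^n)$ extending $p_0$, subject to $\tw(h_p) \le \theta_0$ and some uniform bi-Lipschitz control on bounded regions, with $q \le p$ iff $p_0 \subseteq q_0$ and $h_q$ refines $h_p$ on the simplicial complex witnessing $h_p$. The metric control is built in to ensure that the generic $f_0 = \bigcup \{p_0 : p \in G\}$ is uniformly continuous on bounded sets, and so extends to a genuine $f \in \HH(\RRR^n)$; by Lemma \ref{lemma-cl-tw}, this $f$ then satisfies $\tw(f) = \tw(f_0) \le \theta_0 \le \theta$.

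For each $d \in D$ and each $e \in E$, the sets $\mathcal D_d = \{p : d \in \dom(p_0)\}$ and $\mathcal E_e = \{p : e \in \ran(p_0)\}$ are dense in $\mathbb P$, giving $\aleph_1$ dense sets in total. Assuming $\mathbb P$ is proper, $\PFA$ produces a filter $G$ meeting all of them, and we extract $f$ from $G$ as above.

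The two technical tasks are: (i) a geometric extension lemma asserting that every finite partial bijection $p_0 \subset \RRR^n \times \RRR^n$ with $\tw(p_0) < \theta_0$ extends to a PL homeomorphism of $\RRR^n$ with $\tw \le \theta_0$; and (ii) properness of $\mathbb P$. For (i), one would argue inductively: when $\theta_0 > \pi/2$, each existing pair in $p_0$ constrains the image of a new domain point to lie, directionally, in a spherical cap of angular radius $>\pi/2$ around a given displacement vector, and finitely many such caps still have open nonempty common intersection, so the set of admissible images is open and meets $E$ by $\aleph_1$-density. For (ii), one uses the standard countable elementary submodel argument: given $M \prec H(\kappa)$ with $\mathbb P \in M$ and $p \in \mathbb P \cap M$, build an $(M,\mathbb P)$-generic $q \le p$ by, for each dense open $\mathcal A \in M$, locating an extension in $\mathcal A$ whose finite ``twist type'' matches one already embedded in $q$, and amalgamating via a $\Delta$-system / type-isomorphism step, with the slack $\theta - \theta_0$ absorbing any small angular error introduced in the amalgamation.

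The main obstacle I expect is the amalgamation step in the properness proof. In Baumgartner's one-dimensional setting, interleaving two finite order-isomorphic partial bijections of $\RRR$ is combinatorially automatic, but in $\RRR^n$ one must amalgamate two PL extensions of matching twist and distortion type, and it is not obvious that a common PL refinement admits an ambient extension still obeying $\tw \le \theta_0$. The hypothesis $\theta > \pi/2$ is decisive both here and in step (i), since $\pi/2$ is the threshold at which the ``legal direction'' caps around each displacement vector fill strictly more than a hemisphere, guaranteeing that finitely many of them share an open common region; for $\theta \le \pi/2$ this freedom is lost and the amalgamations can fail.
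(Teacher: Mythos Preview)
Your architecture matches the paper's: conditions pair a finite partial bijection $\sigma_p \subset D \times E$ with an ambient homeomorphism $h_p \supseteq \sigma_p$ of small twist, and the closure of the generic bijection is the desired $f$. The paper uses $C^1$ diffeomorphisms (its class $\FF_\theta$, Definition~\ref{def-FF}) rather than PL maps, adds a metric ``promise'' $\varkappa_p$ so that the $h_p$ form a $d$-Cauchy net, works under CH and proves the poset is ccc rather than proper, and then derives the PFA statement via the collapse-the-continuum trick.

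Two steps in your sketch are genuine gaps. Your density argument (i) rests on the claim that finitely many spherical caps in $S^{n-1}$, each of angular radius $>\pi/2$, must have nonempty common intersection; this is false already in $S^1$ (three arcs of angular radius slightly above $\pi/2$, centred at the cube roots of unity, have empty common intersection). The paper in fact leaves it \emph{open} whether the side-condition-free poset $\PPP^\theta_0$ has the required dense sets (see the Question in Section~\ref{sec-examples}); density is obtained instead from the $h_p$ side condition, by choosing $e \in E$ near $h_p(d)$ and perturbing $h_p$ locally to hit $e$ while keeping $\tw < \theta$ --- this is Lemma~\ref{lemma-dense-de}, and it takes real work (Lemma~\ref{lemma-small-change}).

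Your properness sketch also misses the decisive combinatorial idea. The paper's ccc for $\PPP^\theta_0$ (Lemma~\ref{lemma-elem-ccc}) is an induction on $t = |\sigma|$: at the inductive step, an elementary-submodel argument produces, for each $\alpha$, \emph{two} distinct candidate values $\widehat e_\alpha, \widetilde e_\alpha$ for the top-height coordinate. The point of $\theta > \pi/2$ is that for any given vector $d_\beta^t - d_\alpha^t$, at least one of the two resulting displacement directions makes angle $\le \pi/2$ with it, and then the slack $\varepsilon = \theta - \pi/2$ absorbs the approximation error. A single ``matching twist type plus slack $\theta - \theta_0$'' does not supply this two-option dichotomy, and without it the amalgamation need not succeed. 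The lift of ccc to the full poset with $h$-side-conditions (Section~\ref{sec-ccc}) then needs an averaging-and-local-modification argument producing $h_q \supseteq \sigma_\alpha \cup \sigma_\beta$ with $\tw(h_q) < \theta$; this is where most of the geometric effort lives (Lemmas~\ref{lemma-morph}, \ref{lemma-step-1}, \ref{lemma-step-2}, \ref{lemma-step-1-2}), and your PL framework would need analogues of all of these.
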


The ``$\PFA$'' is needed here, since it is consistent with
$\MA + \cccc = \aleph_2$ that the proposition fails for all 
$n \ge 1$ and all $\theta < \pi$ (Example \ref{ex-entangled}).

The ``$\theta > \pi/2$'' is needed here, since for 
$\theta \le \pi/2$ and $n \ge 2$, there is a $\ZFC$ counter-example
(Example \ref{ex-twistlimit}).
Of course, when $n = 1$, this is just Baumgartner's result,
and $\tw(f)$ can be $0$.

But now, we wish to add into
Proposition \ref{prop-intro} the claim that $f$ is AC.
Since for $n \ge 2$, AC is not quite a standard notion,
we shall define what we mean here:

\begin{definition}
\label{def-BAC}
Let $X$ be a Polish space with a $\sigma$-finite Borel measure $\mu$,
and fix $f \in \HH(X)$.
Then $f$ is \emph{absolutely continuous}
\textup(with respect to $\mu$\textup)
iff for all
$\varepsilon > 0$ there is a $\delta > 0$ such that 
for all open $U$, 
$\mu(U)  < \delta \to \mu(f(U)) < \varepsilon$.
$f$ is \emph{bi-absolutely continuous} \textup(BAC\textup) iff
$f$ and $f\iv$ are both absolutely continuous.
When discussing $\RRR^n$, $\mu$ always refers to Lebesgue measure.
\end{definition}

When $X = \RRR$, $f$ is a monotonic function, and this definition
coincides with the usual definition of absolute continuity
for real-valued functions.
For general $X$ and $f$: If $f$ is BAC, then the induced measures are
absolutely continuous 
($\mu \ll \mu f \ll \mu f\iv \ll \mu$;
that is, $\mu(B) = 0 \leftrightarrow
\mu(f(B)) = 0 \leftrightarrow \mu(f\iv(B)) = 0 $
for all Borel $B \subseteq X$). 
This implication is an equivalence when $\mu(X) < \infty$, but
not in general; the map $x \mapsto x^3$ on $\RRR$ is a
counter-example.

We can now combine our two extensions of Baumgartner's result:

\begin{theorem}
\label{thm-main}
Assume $\PFA$.  Fix $\theta > \pi/2$ and $\aleph_1$--dense
$D,E \subset \RRR^n$.  Then there
is an $f \in \HH(\RRR^n)$
such that $f(D) = E$ and $\tw(f) \le \theta$ and $f$ is BAC.
\end{theorem}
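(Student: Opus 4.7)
The plan is to strengthen the $\PFA$ forcing behind Proposition~\ref{prop-intro} by adding finitary side conditions that force BAC. Fix $\theta' \in (\pi/2, \theta)$, giving a strict buffer between the generic twist bound and $\theta$. Let $\PP$ be the poset whose conditions are triples $p = (F_p, \alpha_p, \beta_p)$, where $F_p \subseteq D \times E$ is a finite partial bijection with $\tw(F_p) \le \theta'$, and $\alpha_p, \beta_p$ are finite partial functions from $\QQQ^+$ to $\QQQ^+$ playing the role of modulus-of-AC commitments for $f$ and $f\iv$ respectively. A pair $(\varepsilon,\delta) \in \alpha_p$ carries the commitment that any $q \le p$ satisfies a combinatorial packing constraint on $F_q$ which in the limit translates to: open sets of Lebesgue measure $< \delta$ map to sets of measure $< \varepsilon$ under the generic homeomorphism; symmetrically, $\beta_p$ commits the inverse direction.

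Under $\PFA$, three families of dense sets must be met. First, $\{p : d \in \dom F_p\}$ is dense for each $d \in D$ (dually for $e \in E$): given $p$ and $d$, the image $F_p(d) \in E$ can always be chosen to be twist-compatible with $F_p$ (as in Proposition~\ref{prop-intro}) and also sparsely enough placed to respect the finitely many active promises in $\alpha_p, \beta_p$; the $\aleph_1$-density of $E$ provides the needed freedom. Second, $\{p : \varepsilon \in \dom \alpha_p\}$ is dense for each $\varepsilon \in \QQQ^+$ (dually for $\beta_p$): given $\varepsilon$, one exhibits a workable $\delta$ by analyzing the minimum spacing inside $F_p$ so that the packing constraint is already satisfied by $F_p$ and will be preserved by admissible extensions. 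Third, $\PP$ is proper, verified via the standard Baumgartner-style argument with a countable elementary submodel $M \prec H(\lambda)$ containing all parameters. Granted these, $\PFA$ produces a filter $G$ meeting all such dense sets, yielding $F_G := \bigcup\{F_p : p \in G\}$ as an $\aleph_1$-dense bijection $D \to E$ with $\tw(F_G) \le \theta'$ and full cumulative moduli $\alpha_G, \beta_G$. By Lemma~\ref{lemma-cl-tw} the closure $f := \overline{F_G}$ is a bijection of $\RRR^n$ with $\tw(f) \le \theta$, and routine arguments show $f \in \HH(\RRR^n)$ with $f, f\iv$ both absolutely continuous, witnessed by $\alpha_G$ and $\beta_G$.

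The main obstacle is properness, specifically the amalgamation step: given $q_0 \in M$ and $q_1$ outside $M$, both extending some $p \in M$, one must produce a common extension respecting both the twist bound and the measure promises of $q_0$ and $q_1$. Twist amalgamation uses the buffer $\theta' < \theta$ as in Proposition~\ref{prop-intro}. Measure amalgamation is more delicate: a pair of $F_{q_1}$ outside $M$ could in principle violate a promise of $q_0$. The key design requirement is that the combinatorial form of the packing constraints be sufficiently \emph{local}, so that any new pair placed beyond an explicit scale -- determined by the thresholds of the active promises -- from $\dom F_p \cup \ran F_p$ automatically respects them. Calibrating this locality to be strong enough to make the density of new AC promises achievable, yet weak enough to be preserved under amalgamation, is the heart of the proof; once it is established, extracting the generic, taking the closure, and verifying BAC proceeds routinely.
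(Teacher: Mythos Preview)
Your sketch has the right overall shape (add side conditions controlling AC to a Baumgartner-type poset, then apply $\PFA$), but two related gaps make it unworkable as stated.

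First, the ``combinatorial packing constraint'' is never defined, and for $n \ge 2$ it cannot be a condition on the finite set $F_p$ alone. Absolute continuity is governed by the Jacobian determinant of the eventual map; a finite set of point-pairs has no Jacobian. In dimension $1$ one can canonically interpolate $F_p$ piecewise-linearly and speak of slopes (this is exactly what the paper does in the $n=1$ case, Lemma~\ref{lemma-main-1}), but in higher dimensions there is no canonical interpolant with controlled twist. The paper's solution is to make the interpolant part of the condition: a condition is a quadruple $(\sigma_p, h_p, \varkappa_p, \Ups_p)$ where $h_p \in \FF_\theta$ is an actual $C^1$ diffeomorphism extending $\sigma_p$, $\varkappa_p$ is a radius forcing future $h_q$ to lie in $B(h_p,\varkappa_p)$, and $\Ups_p$ bounds the measures $\mu(Z^{h_p}_\ell)$ of level sets of $\det J_{h_p}$. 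The AC bound then follows from Lemmas~\ref{lemma-equi-ac}--\ref{lemma-psi-bound} applied uniformly to the sequence $(h_p)_{p\in G}$, not from any combinatorial property of the finite $\sigma_p$'s.

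Second, your claim that ``routine arguments show $f \in \HH(\RRR^n)$'' fails for $n \ge 2$. In dimension $1$ the closure of an order-preserving dense bijection is automatically a homeomorphism, but there is no analogous fact for twist-bounded finite bijections in higher dimensions; nothing prevents $\overline{F_G}$ from being multi-valued. The paper gets $f \in \HH(\RRR^n)$ precisely because the side conditions $h_p,\varkappa_p$ force $f$ and $f\iv$ to be \emph{uniform} limits of the $h_p$ and $h_p\iv$, which are themselves homeomorphisms.

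Carrying $h_p$ in the condition has a real cost: proving density (Lemma~\ref{lemma-dense-de}) and ccc (Lemma~\ref{lemma-ccc}) now requires constructing new $C^1$ diffeomorphisms with controlled twist and Jacobian close to given ones, which is the substantive content of Sections~\ref{sec-mat} and~\ref{sec-ccc}. Your outline does not supply, or suggest a route to, this construction.
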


Proposition \ref{prop-intro} is obvious from this.
Theorem \ref{thm-main} is proved at the end of Section \ref{sec-ac}.
We shall prove the $n = 1$ case first (Lemma \ref{lemma-main-1});
here, the ``$\tw(f) \le \theta$'' is trivial, making the proof
quite a bit simpler; we shall then use the notation in that proof
to motivate the terminology in the general proof.
Actually, our proof for the $n > 1$ case uses some properties
of our forcing poset that are not proved until Sections
\ref{sec-mat} and \ref{sec-ccc}.

\section{The Basic Poset}
\label{sec-basic}
We describe here a natural modification of 
Baumgartner's poset, obtained by replacing $\RRR$ by $\RRR^n$
and replacing ``order preserving'' by
a restriction on twists, and we shall prove
that our poset is ccc.
Since we plan to use $\PFA$ with the
``collapse the continuum trick''
(or else just do an iterated forcing argument over a model of $\GCH$),
it is sufficient to assume $\CH$, fix $\theta,D,E$,
and produce a ccc poset $\PPP$ that forces an appropriate $f$.  
For constructing ccc posets in our forcing arguments,
we use the standard setup with elementary submodels, following
approximately the terminology in \cite{Kunen1}:

\begin{definition}
\label{def-elem-submod-chain}
Let $D,E \subseteq \RRR^n$ be $\aleph_1$--dense.
Fix $\kappa$,  a suitably large regular cardinal.
Let $\langle M_\xi : 0 < \xi < \omega_1 \rangle$ be a continuous 
chain of countable elementary submodels of $H(\kappa)$,
with $D,E \in M_1$ and each $M_\xi \in M_{\xi + 1}$.
Let $M_0 = \emptyset$.
For $x \in \bigcup_\xi M_\xi$, let $\hgt(x)$, the \emph{height} of $x$,
be the $\xi$ such that $x \in M_{\xi+1} \backslash M_\xi$.
\end{definition}

By setting $M_0 = \emptyset$, we ensure that under $\CH$,
$\hgt(x)$ is defined whenever $x \in \RRR^n$ or 
$x$ is a Borel subset of $\RRR^n$.
Observe that $\{d \in D : \hgt(d) = \xi\}$ and
$\{e \in E : \hgt(e) = \xi\}$ are both countable and dense
for each $\xi < \omega_1$.
Note that $\hgt(\, (x,y) \,) = \max( \hgt(x), \hgt(y) )$.

\begin{definition}
\label{def-poset-approx}
Fix $\theta\in (0,\pi)$ and $\aleph_1$--dense
$D,E \subset \RRR^n$. 
Assume $\CH$ and
use the notation from Definition \ref{def-elem-submod-chain} for the elementary 
submodels.  Then, let $\PPP^\theta_0$ be the set of all 
$p$ satisfying:
\begin{itemizz}  
\item[P1.] $p \in
[ D  \times E ]^{<\omega}$ is a bijection from $\dom(p)$ onto
$\ran(p)$.
\item[P2.] $\tw(p) < \theta$.
\item[P3.] For each $(d,e) \in p$,
$\hgt(d), \hgt(e)$ differ by a finite non-zero ordinal.
\item[P4.]  $(d_0,e_0) \in p \wedge (d_1,e_1) \in p
\wedge (d_0,e_0)\ne(d_1,e_1) \Rightarrow
\hgt(\, (d_0,e_0)\,) \ne \hgt (\, (d_1,e_1)\,) $.
\end{itemizz}
Define $q \le p$ iff $q \supseteq p$; so $\one = \emptyset$.
When $n=1$,  $\PPP_0 =  \PPP^\theta_0$ for
some \textup(any\textup) $\theta \in (0,\pi)$.
\end{definition}

Consider the one-dimensional version of this, so
in the ground model $V$, $D, E$ are $\aleph_1$--dense subsets of $\RRR$.
It is easy to see that the sets
$\{p : d \in \dom(p)\}$ and $\{p : e \in \ran(p)\}$ are dense for
all $d \in D$ and $e \in E$, so in $V[G]$, $\bigcup G$
is an order-preserving bijection from $D$ onto $E$.
Viewing $\bigcup G$ as a subset of $\RRR \times \RRR$,
let $f = \cl(\bigcup G)$.  Then, in $V[G]$ we have
$f \in \HH(\RRR)$ and $f(D) = E$.

Since the definition of $\PPP_0$ contains nothing relevant to absolute
continuity, this cannot suffice
to prove Theorem \ref{thm-main}:

\begin{example}
\label{ex-not-ac}
With $f$ as above, neither $f$ nor $f\iv$
is absolutely continuous. 
\end{example}
\begin{proof}
First, for 
$p \in \PPP_0$, let $h_p \in \HH(\RRR)$ be the natural
piecewise linear extension
of $p$ obtained by linear interpolation, giving it a slope of $1$
outside of 
$[\min(\dom(p)), \max(\dom(p))]$.
Let $h_\one(x) = x$.
Note that $(h_p)\iv = h_{p\iv}$.
When $p \ne \one$, let
$d^0_p = \min(\dom(p))$ and $d^1_p = \max(\dom(p))$ and
$e^0_p = p(d^0_p) = \min(\ran(p))$ and
$e^1_p = p(d^1_p) = \max(\ran(p))$.
For each $n > 0$,
let $\Delta_n$ be the set of all $p$ such that
$d^0_p, e^0_p \le -n$ and $d^1_p, e^1_p\ge n$ and
$\forall x \in [(d^0_p, d^1_p) \backslash \dom(p) ] \;
[h_p'(x) \in (0, 2^{-n}) \cup (2^n, \infty) ]  $.
Note that all the $\Delta_n$ are dense.  Using these, and setting
$f = \cl(\bigcup G)$, we see that both $f$ and $f\iv$
map a null set onto the complement of a null set.
\end{proof}

Also, both $f'$ and $(f\iv)'$ are differentiable almost everywhere,
with derivative $0$ almost everywhere.

We shall eventually modify $\PPP^\theta_0$ by adding some side conditions, 
obtaining a proof of Theorem \ref{thm-main},
but we shall conclude this section by proving
that $\PPP^\theta_0$ is ccc.
This is a straightforward variant of Baumgartner's argument:

\begin{lemma}
\label{lemma-elem-ccc}
Fix $\theta > \pi/2$ and $t \in \omega$, and assume that:
\begin{itemizz}
\item[1.]
$p_\alpha = \{ (d_\alpha^0, e_\alpha^0 ), \ldots,
(d_\alpha^{t-1} , e_\alpha^{t-1} ) \} $ satisfies $(P1)(P3)(P4)$ above
for each $\alpha < \omega_1$.
\item[2.]
$ d_\alpha^i \ne \ d_\beta^j$ and
$ e_\alpha^i \ne \ e_\beta^j$ unless
$\alpha = \beta$ and $i = j$.
\end{itemizz}
Then there are $\alpha \ne \beta$ such that 
$\angle( d_\beta^i - d_\alpha^i, e_\beta^i - e_\alpha^i ) < \theta$
for all $i < t$.
Hence, $\PPP^\theta_0$ is ccc.
\end{lemma}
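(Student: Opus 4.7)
The plan is to follow Baumgartner's template: apply pigeonhole and compactness thinnings to the uncountable family $\{p_\alpha\}$, then use a geometric argument based on $\theta > \pi/2$ and the height separation from (P3) to locate a compatible pair via an elementary-submodel argument.

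First, I thin to an uncountable sub-sequence so that: (i) by pigeonhole on (P3), the signed height difference $\hgt(d_\alpha^i) - \hgt(e_\alpha^i) = k_i$ is a fixed nonzero integer for each $i<t$; (ii) for $\alpha < \beta$, every height appearing in $p_\alpha$ strictly precedes every height in $p_\beta$; (iii) the relative ordering of heights within each $p_\alpha$ is the same for all $\alpha$; (iv) by compactness in the one-point compactification of $\RRR^n$, $d_\alpha^i \to d_*^i$ and $e_\alpha^i \to e_*^i$; (v) by compactness in $S^{2n-1}$, the normalized displacement $(d_\alpha^i - d_*^i,\, e_\alpha^i - e_*^i)/\|\cdot\|$ converges to some $(a^i,b^i) \in S^{2n-1}$. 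Each step preserves uncountability and may be carried out inside a countable elementary submodel $M \prec H(\kappa)$ containing the sequence and the parameters $D,E,t,\theta$; let $\delta = M \cap \omega_1$.

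A direct limit computation then gives $\angle(d_\beta^i - d_\alpha^i,\, e_\beta^i - e_\alpha^i) \to \angle(a^i, b^i)$ as $\alpha < \beta$ grow within the thinned family, so it suffices to re-thin inside $M$ to make $\angle(a^i, b^i) < \theta$ for every $i < t$. The only way one could be stuck is if the accumulation set of normalized displacements in $S^{2n-1}$ is forced to lie on the antipodal arc $\{(a,-a)\}$, giving $\angle = \pi$. But this would require an exact affine relation $e_\alpha^i = C^i - d_\alpha^i$ with the constant $C^i$ available at a stage strictly below $\hgt(d_\alpha^i)$, which (together with the symmetric case) forces $\hgt(e_\alpha^i) = \hgt(d_\alpha^i)$ and contradicts (P3) since $k_i \ne 0$. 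Because $\theta > \pi/2$, the open target $\{(a,b) \in S^{2n-1} : \angle(a,b) < \theta\}$ is strictly larger than a half-sphere, so once the antipodal pattern is excluded, elementarity of $M$ together with $\aleph_1$-density of $D$ and $E$ permits a re-thinning that lands in this target for every $i$ simultaneously, yielding the desired $\alpha \ne \beta$.

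For the cccness conclusion, given any uncountable subfamily of $\PPP^\theta_0$, (P2) provides $\tw(p_\gamma) < \theta$ for each $\gamma$ for free, and running the same thinning-plus-elementarity argument with the $t^2$ pair-labels $(i,j)$ (to cover the cross-index angles $\angle(d_\beta^j - d_\alpha^i,\, e_\beta^j - e_\alpha^i)$ with $i \ne j$) produces $\alpha \ne \beta$ with $\tw(p_\alpha \cup p_\beta) < \theta$, i.e.\ compatibility in $\PPP^\theta_0$. The main obstacle is the geometric--model-theoretic step of the third paragraph: making precise that (P3) excludes the antipodal-locked configuration so that the slack $\theta > \pi/2$ can be used to land the re-thinning in the admissible cone; this is where the elementary-submodel chain from Definition \ref{def-elem-submod-chain} enters essentially.
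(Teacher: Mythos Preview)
Your proposal has the right instincts---use the elementary-submodel chain, exploit (P3), and use the slack $\theta>\pi/2$---but the core geometric step (your third paragraph) has a genuine gap. The assertion that ``the only way one could be stuck is if the accumulation set lies on the antipodal arc $\{(a,-a)\}$'' is false: being stuck means every accumulation point $(a^i,b^i)$ has $\angle(a^i,b^i)\ge\theta$, which for $\theta<\pi$ is a region of positive measure on $S^{2n-1}$, not just the antipodal set. Even granting the antipodal case, your deduction of an \emph{exact} affine relation $e_\alpha^i=C^i-d_\alpha^i$ does not follow from directional convergence to $(a,-a)$; that only gives an asymptotic ratio, so no contradiction with (P3) emerges. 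The final sentence (``elementarity of $M$ together with $\aleph_1$-density permits a re-thinning that lands in the target'') is not an argument: the pairs $(d_\alpha^i,e_\alpha^i)$ are \emph{given}, so re-thinning can only select a subfamily, not steer the accumulation direction into a prescribed cone. You have not exhibited any mechanism that forces some accumulation point into $\{\angle<\theta\}$. (There is also a secondary issue: your limit claim $\angle(d_\beta^i-d_\alpha^i,e_\beta^i-e_\alpha^i)\to\angle(a^i,b^i)$ fails when $a^i=0$ or $b^i=0$, and even otherwise needs the $r_\alpha$ well-separated relative to the $o(r_\alpha)$ error.)

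The paper supplies exactly the missing mechanism, and it is where (P3) and $\theta>\pi/2$ genuinely do work. One inducts on $t$. At the inductive step, arrange (by permuting and thinning) that $\hgt(e_\alpha^t)=\hgt(p_\alpha)$ strictly exceeds the height of every other coordinate of $p_\alpha$. Let $K=\overline{\{p_\gamma:\gamma<\omega_1\}}\subseteq(\RRR^n)^{2t+2}$ and $K_\alpha=\{y:p_\alpha/y\in K\}$ (replace $e_\alpha^t$ by $y$). Then $K_\alpha\in M_{\hgt(p_\alpha)}$ but $e_\alpha^t\in K_\alpha\setminus M_{\hgt(p_\alpha)}$, so $K_\alpha$ is uncountable. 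Pick \emph{two} distinct points $\widehat e_\alpha,\widetilde e_\alpha\in K_\alpha$ with small disjoint neighborhoods $U,V$ (angular diameter $<(\theta-\pi/2)/2$); thin so $U,V$ are constant. Apply the inductive hypothesis to get $\alpha\ne\beta$ good for $i<t$. Now the key dichotomy: for any $x\in U$, $y\in V$, since $\angle(v,y-x)+\angle(v,x-y)=\pi$, one of these angles with $v=d_\beta^t-d_\alpha^t$ is $\le\pi/2$. Whichever it is, approximate the corresponding configuration (using $\widehat e_\alpha\in K_\alpha$, $\widetilde e_\beta\in K_\beta$, or the swap) by actual $p_\mu,p_\nu$ from the family with $e_\mu^t,e_\nu^t$ in $U,V$; a triangle-inequality estimate then gives $\angle(d_\nu^t-d_\mu^t,e_\nu^t-e_\mu^t)<\pi/2+(\theta-\pi/2)=\theta$. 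The ``two points in $K_\alpha$'' device is what your outline lacks.

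A minor correction on the ccc deduction: you do \emph{not} rerun the main argument for the $t^2$ cross-index angles. After a delta-system and condensation-point thinning, the off-diagonal differences $d_\beta^j-d_\alpha^i$ (for $i\ne j$) are close to the fixed nonzero vector $d^j-d^i$, so those angles are controlled by $\tw$ of a single condition; only the diagonal $i=j$ angles need the lemma.
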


\begin{proof}
The ccc follows from the rest of the lemma by a standard delta system argument.

Now, induct on $t$.  The case $t =0$ is trivial, so assume the
result for $t$, and we shall prove it for $t+1$; so now $p_\alpha =
\{ (d_\alpha^0, e_\alpha^0 ), \ldots, (d_\alpha^t , e_\alpha^t ) \} $.
Permuting and thinning the sequence if necessary, we may assume that each
$\hgt(p_\alpha) = \hgt(e_\alpha^t) > \hgt(d_\alpha^t)$,
and that $\alpha < \beta \to \hgt(p_\alpha) < \hgt(p_\beta)$.
Note that
$\hgt(p_\alpha) > \hgt(d_\alpha^{i})$ and
$\hgt(p_\alpha) > \hgt(e_\alpha^{i})$ for all $i < t$.

Identify each $p_\alpha$ with a point in $(\RRR^n)^{2t+2}$,
and let $K = \cl\{ p_\alpha : \alpha < \omega_1\} \subseteq (\RRR^n)^{2t+2}$.
For each $\alpha$  and each $y \in \RRR^n$, obtain
$p_\alpha/y \in (\RRR^n)^{2t+2}$
by replacing the $e_\alpha^{t}$ by $y$ in $p_\alpha$.
Let $K_\alpha = \{y \in \RRR^n : p_\alpha/y \in K\}$.
Applying $\CH$, fix $\zeta$ such that $K \in M_\zeta$.

For $\alpha \ge \zeta$: $K_\alpha$ is uncountable because
$K_\alpha \in M_{\hgt(p_\alpha)}$,
$e_\alpha^{t} \in K_\alpha$, and $e_\alpha^t \notin M_{\hgt(p_\alpha)}$.
Fix $\widehat e_\alpha \ne \widetilde e_\alpha$  in $ K_\alpha$;
we may assume that these are different from all the $ e_\alpha^i $.
Since $\theta > \pi/2$,
$\varepsilon := \theta - \pi/2 > 0$.
Now, fix disjoint basic open neighborhoods $U, V$ of
$\widehat e_\alpha, \widetilde e_\alpha$ respectively
so that
$\angle ( x_1 -  y_1,\, x_2 - y_2) < \varepsilon/2$
for all $ x_1, x_2 \in U$ and all
$ y_1, y_2 \in V$.

Of course, $U, V$ depend on $\alpha$, but we may fix an
uncountable $S \subseteq \omega_1 \backslash \zeta$
such that they have the same values for all $\alpha \in S$.
Then, applying induction, fix $\alpha \ne \beta$ in $S$ such that
$\angle( d_\beta^i - d_\alpha^i, e_\beta^i - e_\alpha^i ) < \theta$
for all $i < t$.  Then, fix any $x \in U$ and any $y \in V$.
Then either $\angle( d_\beta^t - d_\alpha^t, y-x) \le \pi/2$ or
$\angle( d_\beta^t - d_\alpha^t, x-y) \le \pi/2$, since the
sum of the two angles is $\pi$.
In any case, $x, \widehat e_\alpha, \widehat e_\beta \in U$ and
$y, \widetilde e_\alpha, \widetilde e_\beta \in V$.

If  $\angle( d_\beta^t - d_\alpha^t, y-x) \le \pi/2$, use
$\widehat e_\alpha \in K_\alpha$ and $\widetilde e_\beta \in K_\beta$;

\begin{tabular}{rlcl}
 \; approximate 
&$\{ (d_\alpha^0, e_\alpha^0 ), \ldots, (d_\alpha^t , \widehat e_\alpha ) \} $ 
&and
&$\{ (d_\beta^0, e_\beta^0 ), \ldots, (d_\beta^t , \widetilde e_\beta ) \} $ \\
by
&$\{ (d_\mu^0, e_\mu^0 ), \ldots, (d_\mu^t , e_\mu^t ) \} $ 
&and
&$\{ (d_\nu^0, e_\nu^0 ), \ldots, (d_\nu^t , e_\nu^t ) \} $ 
\end{tabular}

\noindent
Fix $\mu,\nu$
such that $e_\mu^t \in U$ and $e_\nu^t \in V$ and
$\angle( d_\nu^i - d_\mu^i, e_\nu^i - e_\mu^i ) < \theta$
for all $i < t$
and $\angle( d_\nu^t - d_\mu^t, d_\beta^t - d_\alpha^t ) < \varepsilon/2$.
Then $\angle( d_\nu^t - d_\mu^t, e_\nu^t - e_\mu^t ) \le   
\angle( d_\beta^t - d_\alpha^t, y-x)   +
\angle ( d_\nu^t - d_\mu^t, d_\beta^t - d_\alpha^t )+
\angle(  e_\nu^t - e_\mu^t , y-x) < \theta$.

If $\angle( d_\beta^t - d_\alpha^t, x-y) \le \pi/2$, use
$\widetilde e_\alpha \in K_\alpha$ and $\widehat e_\beta \in K_\beta$;

\begin{tabular}{rlcl}
 \; approximate
&$\{ (d_\alpha^0, e_\alpha^0 ), \ldots, (d_\alpha^t , \widetilde e_\alpha ) \} $
&and
&$\{ (d_\beta^0, e_\beta^0 ), \ldots, (d_\beta^t , \widehat e_\beta ) \} $ \\
by
&$\{ (d_\mu^0, e_\mu^0 ), \ldots, (d_\mu^t , e_\mu^t ) \} $
&and
&$\{ (d_\nu^0, e_\nu^0 ), \ldots, (d_\nu^t , e_\nu^t ) \} $
\end{tabular}

\noindent
Fix $\mu,\nu$
such that $e_\nu^t \in U$ and $e_\mu^t \in V$ and
$\angle( d_\nu^i - d_\mu^i, e_\nu^i - e_\mu^i ) < \theta$
for all $i < t$
and $\angle( d_\nu^t - d_\mu^t, d_\beta^t - d_\alpha^t ) < \varepsilon/2$.
Then $\angle( d_\nu^t - d_\mu^t, e_\nu^t - e_\mu^t ) \le   
\angle( d_\beta^t - d_\alpha^t, x-y)   +
\angle ( d_\nu^t - d_\mu^t, d_\beta^t - d_\alpha^t )+
\angle(  e_\nu^t - e_\mu^t , x-y) < \theta$.
\end{proof}

Proposition \ref{prop-intro} is false when $\theta \le \pi/2$ and $n \ge 2$;
see Example \ref{ex-twistlimit}.
For an easy counter-example to the lemma in $\RRR^2$, for suitable $D,E$:
For $\alpha < \omega_1$, let $p_\alpha = \{ (d_\alpha, e_\alpha ) \}$, where
the $d_\alpha$ are distinct points on the $x$-axis and
the $e_\alpha$ are distinct points on the $y$-axis with
$\hgt(e_\alpha) = \hgt(d_\alpha) + 1$.
Then $\{ p_\alpha : \alpha < \omega_1 \}$ is an antichain in $\PPP^\theta_0$.

\section{On Absolute Continuity}
\label{sec-ac}
Here, we make some further remarks on absolute continuity
and give a proof of the $n = 1$ case of Theorem \ref{thm-main}.

Our forcing arguments
will obtain ``generic'' functions as limits of
absolutely continuous functions.  But such limits
are not in general absolutely continuous; for example,
in $\RRR$, \emph{every} continuous function on $[0,1]$
is a uniform limit of polynomials
(which are clearly absolutely continuous).
We shall prove absolute continuity by applying Lemma \ref{lemma-equi-ac}.

\begin{lemma}
If $f_j \to f$ pointwise, all $f_j$ are measurable functions,
$U \subseteq X$  is open,
and $\mu(f_j\iv(U)) \le \varepsilon$ for all $j$, 
then $\mu(f\iv(U)) \le \varepsilon$.
\end{lemma}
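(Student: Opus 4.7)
\medskip\noindent\textbf{Proof proposal.}
The plan is to exploit the openness of $U$ to push $f^{-1}(U)$ inside a $\liminf$ of the sets $f_j^{-1}(U)$, and then apply continuity from below of the measure $\mu$.

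First I would observe the pointwise inclusion
\[
f^{-1}(U) \ \subseteq \ \liminf_{j} f_j^{-1}(U)
\ = \ \bigcup_{k} \bigcap_{j \ge k} f_j^{-1}(U)\ .
\]
Indeed, if $f(x) \in U$, then since $U$ is open it contains an open neighborhood of $f(x)$; because $f_j(x) \to f(x)$, there is a $k$ with $f_j(x) \in U$ for all $j \ge k$, i.e.\ $x \in \bigcap_{j \ge k} f_j^{-1}(U)$. This is the only step where openness of $U$ is used, and it is also the only substantive point in the argument: the result obviously fails for general Borel $U$ (take $U$ a countable set and $f_j$ an enumeration of it).

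Next, set $B_k = \bigcap_{j \ge k} f_j^{-1}(U)$. Each $B_k$ is measurable since the $f_j$ are, and the sequence $\{B_k\}$ is increasing in $k$. Moreover $B_k \subseteq f_k^{-1}(U)$, so $\mu(B_k) \le \varepsilon$ for every $k$. By continuity from below of $\mu$ (which holds for any measure, so the $\sigma$-finiteness of $\mu$ is irrelevant here),
\[
\mu\Bigl(\bigcup_k B_k\Bigr) \ = \ \lim_{k\to\infty} \mu(B_k) \ \le \ \varepsilon\ .
\]
Combining with the inclusion from the first step,
\[
\mu(f^{-1}(U)) \ \le \ \mu\Bigl(\bigcup_k B_k\Bigr) \ \le \ \varepsilon\ ,
\]
which is what we wanted. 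There is no real obstacle; the lemma is essentially the Fatou inequality $\mu(\liminf A_j) \le \liminf \mu(A_j)$ specialized to $A_j = f_j^{-1}(U)$, together with the observation that openness of $U$ is exactly the hypothesis needed to put $f^{-1}(U)$ inside this $\liminf$.
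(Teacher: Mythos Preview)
Your proof is correct and follows exactly the paper's approach: the paper's entire proof is the single line ``By pointwise convergence, $f^{-1}(U) \subseteq \bigcup_{m \in \omega} \bigcap_{j \ge m} f_j^{-1}(U)$,'' which is precisely your first displayed inclusion, with the measure-theoretic conclusion left implicit. Your version simply fills in the (routine) details the paper omits.
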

\begin{proof}
By pointwise convergence,
$f\iv(U) \subseteq \bigcup_{m \in \omega}  \bigcap_{j \ge m} f_j\iv(U) $.
\end{proof}

Applying this to $f\iv$:

\begin{lemma}
\label{lemma-equi-ac}
Assume that $f_j \in \HH(X)$ for all $j \in \omega$ and 
$f_j\iv \to f\iv$ pointwise, where $f \in \HH(X)$.
Assume also that for all
$\varepsilon > 0$ there is a $\delta > 0$ such that 
for all open $U$ and \emph{all} $j$,
$\mu(U)  < \delta \to \mu(f_j(U)) < \varepsilon$.
Then $f$ is absolutely continuous.
\end{lemma}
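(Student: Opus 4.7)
The plan is to reduce this immediately to the preceding lemma, applied to the sequence $g_j := f_j\iv$, which converges pointwise to $g := f\iv$ by assumption. Note that $g_j\iv = f_j$ and $g\iv = f$, so for any set $U$ we have $g_j\iv(U) = f_j(U)$ and $g\iv(U) = f(U)$. Since all $f_j$ are homeomorphisms, the $g_j$ are continuous and hence Borel measurable, so the prior lemma does apply to this sequence.

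Fix $\varepsilon > 0$; we must produce a $\delta > 0$ such that $\mu(U) < \delta$ implies $\mu(f(U)) < \varepsilon$ for every open $U$. Apply the equi-continuity hypothesis of the present lemma with $\varepsilon/2$ in place of $\varepsilon$ to obtain a $\delta > 0$ such that, for every open $U$ and every $j \in \omega$, $\mu(U) < \delta$ implies $\mu(f_j(U)) < \varepsilon/2$, and in particular $\mu(f_j(U)) \le \varepsilon/2$. For any such $U$, the preceding lemma (applied to $g_j \to g$, the open set $U$, and the bound $\varepsilon/2$) then yields
\[
\mu(f(U)) \;=\; \mu(g\iv(U)) \;\le\; \varepsilon/2 \;<\; \varepsilon,
\]
which is exactly what absolute continuity of $f$ requires.

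There is essentially no obstacle beyond choosing the initial parameter correctly: the preceding lemma only delivers a non-strict bound, so one must pre-shrink $\varepsilon$ (for example to $\varepsilon/2$) in order to recover the strict inequality appearing in Definition \ref{def-BAC}. Everything else is bookkeeping about which homeomorphism is being inverted when one feeds the sequence into the preceding lemma.
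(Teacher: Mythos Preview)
Your proof is correct and follows exactly the paper's approach: the paper's entire proof is the one-line remark ``Applying this to $f\iv$,'' i.e., feed the sequence $g_j = f_j\iv \to f\iv$ into the preceding lemma so that $g_j\iv(U) = f_j(U)$ and $g\iv(U) = f(U)$. Your extra care with $\varepsilon/2$ to recover the strict inequality in Definition~\ref{def-BAC} is a detail the paper leaves implicit, but it is the right thing to do.
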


When $X = \RRR$, one way to obtain the hypotheses of this lemma is to 
bound uniformly the derivatives of the $f_j$.
For general $\RRR^n$, we use the Jacobian.
We review here some standard notation:

If $f: \RRR^n \to \RRR^n$, then $\partial_i f$ (where $i < n$)
denotes the partial derivative of $f$ with respect to the $i^{\mathrm{th}}$
variable.  Then $\partial_i f: \RRR^n \to \RRR^n$, assuming that this
derivative exists everywhere.
As usual $C^1(\RRR^n, \RRR^n)$ denotes the set of all
$f: \RRR^n \to \RRR^n$ such that each $\partial_i f$ exists everywhere
and is continuous.

As usual, $J_f$ denotes the Jacobian matrix; so 
$J_f : \RRR^n \to \RRR^{n^2} $, and $J_f(x)$ is an $n\times n$
matrix whose $j^{\mathrm{th}}$ column is $\partial_j f(x)$
(viewed as a column vector).
Recall that if $f$ and $f\iv$ are $C^1$ bijections,
then $J_{f\iv}(f(x)) = (J_f(x))\iv$.

Also, if $f$ is 1-1 and $C^1$ on $U$, then
$\mu(f(U)) = \int_U | \det J_f(x) |$.
Thus we could obtain the hypotheses of Lemma \ref{lemma-equi-ac}
if we had a uniform bound to all the $|\det J_{f_j}(x)|$.  However,
in our forcing argument, this turns out to be impossible for the
same reason that we cannot get $f$ and $f\iv$ to be Lipschitz in
Theorem \ref{thm-main}.
We shall get a somewhat weaker condition on $f$; 
$|\det J_{f}(x)| < 2$ will hold ``most of the time'',
that is,
$\mu (f( \{x : |\det J_{f}(x)| \ge 2 \} ) )$ will be finite.
We plan to apply Lemma \ref{lemma-bound-jac} below to each $f_j$.
We state it so that it applies both to
$C^1$ functions on $\RRR^n$ and 
to piecewise linear functions on $\RRR$.

\begin{definition}
\label{def-WZ}
For $f : \RRR^n \to \RRR^n$ and $\ell \in (0,\infty)$, let
$W^f_\ell = \{x \in \RRR^n : \ell \le  | \det J_f(x) |  \}$ and
$Z^f_\ell = \{x \in \RRR^n : \ell - 1  \le  |\det J_f(x) | \le \ell \}$.
\end{definition}

\begin{lemma}
\label{lemma-bound-jac}
Fix $f \in \HH(\RRR^n)$, and assume that $f$ is $C^1$
except on some finite set.
Assume also that 
$\int_{W^f_2} | \det J_f(x) | \, dx < \infty$.
Fix $\varepsilon > 0$.  Then choose $k \ge 2$ so that
$\int_{W^f_k} | \det J_f(x) | \, dx < \varepsilon/2$.
Let $\delta = \varepsilon/(2k)$.  Then
for all Borel sets $U$, $\mu(U)  < \delta \to \mu(f(U)) < \varepsilon$.
\end{lemma}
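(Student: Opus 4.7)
The plan is to apply the change of variables formula $\mu(f(B)) = \int_B |\det J_f(x)|\, dx$ (given earlier in the section for $C^1$ bijections) and then split $U$ along the threshold set $W^f_k$. Since $f$ is a homeomorphism that fails to be $C^1$ on only a finite set $F$, both $F$ and $f(F)$ are finite and hence $\mu$-null; removing $F$ from $U$ therefore changes neither $\mu(U)$ nor $\mu(f(U))$, and on $U \setminus F$, which lies in the open set $\RRR^n \setminus F$ where $f$ is a $C^1$ bijection onto its image, the change of variables formula applies directly.

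Decompose $U \setminus F$ as the disjoint union $(U \cap W^f_k) \cup (U \setminus (W^f_k \cup F))$. For the first piece, monotonicity of the integral combined with the choice of $k$ gives
\[
\mu\bigl(f(U \cap W^f_k)\bigr) \;=\; \int_{U \cap W^f_k} |\det J_f(x)|\, dx \;\le\; \int_{W^f_k} |\det J_f(x)|\, dx \;<\; \varepsilon/2.
\]
For the second piece, the definition of $W^f_k$ gives $|\det J_f(x)| < k$ pointwise off $W^f_k$, so
\[
\mu\bigl(f(U \setminus W^f_k)\bigr) \;=\; \int_{U \setminus W^f_k} |\det J_f(x)|\, dx \;\le\; k\,\mu(U \setminus W^f_k) \;\le\; k\,\mu(U) \;<\; k\delta \;=\; \varepsilon/2.
\]
Summing the two bounds yields $\mu(f(U)) < \varepsilon$, as required.

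There is really no substantial obstacle here; once the decomposition $U = (U \cap W^f_k) \cup (U \setminus W^f_k)$ is in place, the estimate is immediate from the change of variables formula. The only minor point demanding attention is the applicability of that formula despite $f$ failing to be $C^1$ on a finite set, which is why one first discards the singular points; this is harmless because a finite set has Lebesgue measure zero and its image under a homeomorphism is again finite. The hypothesis $\int_{W^f_2}|\det J_f|\, dx < \infty$ enters only to guarantee, via dominated convergence applied to $|\det J_f|\cdot\mathbbm{1}_{W^f_k}$ as $k \to \infty$, that the tail integrals $\int_{W^f_k}|\det J_f|\, dx$ can be made smaller than $\varepsilon/2$, so that the choice of $k$ in the statement is legitimate.
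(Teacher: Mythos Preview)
Your proof is correct and follows essentially the same approach as the paper: split $U$ into $U\cap W^f_k$ and $U\setminus W^f_k$, bound the image of the first piece by $\int_{W^f_k}|\det J_f|<\varepsilon/2$ via change of variables, and bound the image of the second by $k\,\mu(U)<\varepsilon/2$ using $|\det J_f|<k$ off $W^f_k$. Your additional remarks about discarding the finite singular set and about why the choice of $k$ is possible are sound but not spelled out in the paper, which leaves them implicit.
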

\begin{proof}
Let
$U = A \cup B$, where $A = U \backslash W^f_k$ and $B = U  \cap W^f_k$.
Then $\mu(f(A)) \le k \mu(A) < k\delta = \varepsilon/2$ and 
$\mu(f(B)) \le \mu(f( W^f_k)) =
\int_{W^f_k} | \det J_f(x) | \, dx < \varepsilon/2$,
so $\mu(f(A \cup B))  \le \varepsilon$.
\end{proof}

Our generic $f$ will not be differentiable, but it will be a limit
of functions $f_j$ to which Lemma \ref{lemma-bound-jac} will apply.
To make the lemma apply \emph{uniformly}, so that we can use
Lemma \ref{lemma-equi-ac}, we shall have a uniform bound 
$\Ups(\ell)$ to each $\mu( Z^{f_j}_\ell)$, and apply:

\begin{lemma}
\label{lemma-psi-bound}
Fix $f \in \HH(\RRR^n)$, and assume that $f$ is $C^1$
except on some finite set.  Then for all $k \ge 2$:
{\arraycolsep=1pt
\[
\begin{array}{rl}
\frac13 \sum_{\ell > k} \ell \mu(Z^f_\ell) \le
\mu(f(W^f_k)) =
\int_{W^f_k} | \det J_f(x)|\, dx& \le \\
\sum_{\ell > k}  \int_{Z^f_\ell} | \det J_f(x)|\, dx& \le
\sum_{\ell > k} \ell \mu(Z^f_\ell) \ \ .
\end{array}
\]
}
\end{lemma}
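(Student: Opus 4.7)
The plan is to establish the four expressions from the middle outward. The central equality $\mu(f(W^f_k)) = \int_{W^f_k} |\det J_f(x)|\,dx$ is immediate from the change-of-variables formula: since $f$ is a homeomorphism, it is 1-1, and since $f$ is $C^1$ off a finite (hence null) set, the standard formula applies on $W^f_k$. The right-most inequality $\int_{Z^f_\ell}|\det J_f| \le \ell\,\mu(Z^f_\ell)$ is immediate from the defining upper bound $|\det J_f(x)| \le \ell$ on $Z^f_\ell$.

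Next I would handle the right-inside inequality $\int_{W^f_k}|\det J_f| \le \sum_{\ell>k}\int_{Z^f_\ell}|\det J_f|$. For this I observe that for every $x\in W^f_k$, setting $\ell = \max(k+1,\lceil |\det J_f(x)|\rceil)$ yields $\ell - 1 \le |\det J_f(x)| \le \ell$, so $x \in Z^f_\ell$. Thus $W^f_k \subseteq \bigcup_{\ell>k}Z^f_\ell$ and subadditivity of the integral gives the claim.

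The delicate step is the left-most inequality $\frac{1}{3}\sum_{\ell>k}\ell\,\mu(Z^f_\ell) \le \int_{W^f_k}|\det J_f|$. The key observation is that the sets $Z^f_\ell$ overlap in a very controlled way: a point $x$ lies in $Z^f_\ell$ iff $\ell\in[\,|\det J_f(x)|,\,|\det J_f(x)|+1\,]$, so $x$ lies in at most two of the $Z^f_\ell$'s (exactly two only when $|\det J_f(x)|$ is an integer). Consequently $\sum_{\ell>k}\mathbf{1}_{Z^f_\ell}(x) \le 2$ for every $x\in W^f_k$. Combining this with the lower pointwise bound $|\det J_f|\ge \ell-1$ on $Z^f_\ell$ gives
\[
\sum_{\ell>k}(\ell-1)\mu(Z^f_\ell) \;\le\; \sum_{\ell>k}\int_{Z^f_\ell}|\det J_f| \;=\; \int_{W^f_k}\Bigl(\sum_{\ell>k}\mathbf{1}_{Z^f_\ell}\Bigr)|\det J_f| \;\le\; 2\int_{W^f_k}|\det J_f|.
\]
Finally, the hypothesis $k \ge 2$ forces every integer $\ell > k$ to satisfy $\ell \ge 3$, hence $\ell - 1 \ge \frac{2}{3}\ell$. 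This upgrades the left-hand side to $\frac{2}{3}\sum_{\ell>k}\ell\,\mu(Z^f_\ell)$, and dividing by $2$ yields the desired $\frac{1}{3}$ factor.

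The main obstacle is precisely this left-most bound: the naive inequality $|\det J_f|\ge\ell-1$ on $Z^f_\ell$ gives a sum over $\ell$, but the sets $Z^f_\ell$ are not disjoint, so a covering argument alone loses a factor of $2$; the factor of $1/3$ in the statement is the combined cost of this overlap together with the gap between $\ell-1$ and $\ell$, which is worst at the smallest admissible $\ell=3$ — and this is exactly why the hypothesis $k\ge 2$ appears.
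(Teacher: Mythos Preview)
Your proof is correct and follows essentially the same approach as the paper's: the change-of-variables formula for the equality, the covering $W^f_k = \bigcup_{\ell>k} Z^f_\ell$ together with the pointwise bounds $\ell-1 \le |\det J_f| \le \ell$ on $Z^f_\ell$ for the two right-hand inequalities, and for the left-most inequality the observation that each point lies in at most two $Z^f_\ell$'s combined with $\ell-1 \ge \tfrac{2}{3}\ell$ for $\ell \ge 3$. The paper writes the last step as $\tfrac12(\ell-1) \ge \tfrac13\ell$, but this is the same arithmetic.
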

\begin{proof}
The ``$=$'' holds by the change-of-variables formula, the
second ``$\le$'' holds because $W^f_k = \bigcup_{\ell > k} Z^f_\ell$,
and the third ``$\le$'' holds because $ | \det J_f(x)| \le \ell$
for all $x \in Z^f_\ell$.
For the first ``$\le$'':  note that each point $x$ is in no more
than two different $Z^f_\ell$, and $ | \det J_f(x)| \ge \ell - 1$
for all $x \in Z^f_\ell$, so that
\[
\begin{array}{l}
\int_{W^f_k} | \det J_f(x)|\, dx \ge
\frac12 \sum_{\ell > k}  \int_{Z^f_\ell} | \det J_f(x)|\, dx \ge
\frac12 \sum_{\ell > k}  (\ell-1) \mu(Z^f_\ell) \ \ ,
\end{array}
\]
and now use $\frac12  (\ell-1) \ge \frac13 \ell$, which holds because
$\ell \ge k+1 \ge 3$.
\end{proof}

It might seem more elegant to let
$Z^f_\ell = \{x \in \RRR^n : \ell - 1  \le  |\det J_f(x) | < \ell \}$.
Then, the $Z^f_\ell$ would partition  $W^f_k$, and the
$\frac13$ in the lemma could be replaced by $\frac23$.
But, our forcing arguments (such as the proof of Lemma \ref{lemma-main-1})
will use the fact that since $[\ell -1 , \ell]$ is closed,
if $x \notin Z^f_\ell$, then also
$x \notin Z^g_\ell$ whenever the derivatives of $f,g$ are sufficiently
close to each other.

In the proof of Theorem \ref{thm-main}, we shall modify
the poset $\PPP^\theta_0$ to 
force an $f$ that is BAC.
To do this, each forcing condition $p$ will have a side condition
$\Ups_p$ that will enable us to apply
Lemma \ref{lemma-bound-jac} to $f$.
First, we describe the one-dimensional case,
where $\det J_f(x) $ is just $f'(x)$:

\begin{lemma}
\label{lemma-main-1}
Theorem \ref{thm-main} holds when $n = 1$.
\end{lemma}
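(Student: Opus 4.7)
The plan for Lemma \ref{lemma-main-1} is to enrich the basic poset $\PPP_0$ (with $n = 1$) by fixing, once and for all, a function $\Ups \colon \{\ell \in \NNN : \ell \ge 3\} \to (0,\infty)$ with $\sum_{\ell \ge 3} \ell \, \Ups(\ell) < \infty$, and declaring $p \in \PPP$ iff $p \in \PPP_0$ and both $\mu(Z^{h_p}_\ell) < \Ups(\ell)$ and $\mu(Z^{h_{p\iv}}_\ell) < \Ups(\ell)$ hold for every $\ell \ge 3$, with the usual extension ordering $q \le p \Leftrightarrow q \supseteq p$. Since the only slope of $h_\emptyset$ is $1$, both measures vanish at $\emptyset$, so $\emptyset \in \PPP$. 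A sufficiently generic filter $G$ will yield an order-preserving bijection $\bigcup G \colon D \to E$, so $f := \cl(\bigcup G) \in \HH(\RRR)$ with $f(D) = E$; the side condition is present precisely to force $f$ to be BAC via Lemmas \ref{lemma-bound-jac} and \ref{lemma-equi-ac}.

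Next, we would prove density of $\{p : d \in \dom(p)\}$ and the dual $\{p : e \in \ran(p)\}$ for each $d \in D$, $e \in E$. Given $p$ with $d \notin \dom(p)$, let $s$ be the slope of $h_p$ on the interval containing $d$, and let $\eta > 0$ be the distance from $s$ to the nearest positive integer. As $e$ ranges over the uncountable set of candidate $E$-points of appropriate height near $h_p(d)$, the two new slopes $s_1(e), s_2(e)$ of $h_{p \cup \{(d,e)\}}$ depend continuously on $e$ and both tend to $s$ as $e \to h_p(d)$. For $e$ close enough that $|s_i(e) - s| < \eta$, both $s_1, s_2$ lie in the same $Z_\ell$-intervals as $s$, so $\mu(Z^{h_{p \cup \{(d,e)\}}}_\ell) = \mu(Z^{h_p}_\ell) < \Ups(\ell)$ for every $\ell \ge 3$; dual reasoning on the inverse side is analogous. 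Handling the countably many $p$ for which $s$ happens to be a positive integer requires a small separate perturbation argument, but leaves uncountably many admissible $e$ available.

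The main obstacle is ccc. Lemma \ref{lemma-elem-ccc} already supplies the combinatorial amalgamation at the $\PPP_0$ level; the side-condition preservation requires a continuity-of-slopes estimate. We would thin the given $\aleph_1$-family $\{p_\alpha\}$ via a Delta-system and elementary-submodel argument so that their coordinates cluster, along an uncountable subfamily, near a common limit point $p^*$ that itself satisfies the strict inequalities. A continuity estimate then shows that $h_{p_\alpha \cup p_\beta}$ has slopes close to those of $h_{p^*}$ for nearby $\alpha, \beta$, so the strict inequalities $\mu(Z^{h_{\cdot}}_\ell) < \Ups(\ell)$ are inherited by the amalgamation. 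Since only finitely many $\ell$ contribute nontrivially to any given $p$ (as $h_p$ has finitely many distinct slopes), this reduces to a finite bookkeeping task; nevertheless, making the perturbation analysis simultaneously compatible across all relevant $\ell$ and for both $h_p$ and $h_{p\iv}$ is where the real work lies.

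Finally, we invoke $\PFA$ against the $\aleph_1$ dense sets $\{p : d \in \dom(p)\}$ and $\{p : e \in \ran(p)\}$ to obtain $G$, and pick an $\omega$-sequence $p_0 \le p_1 \le \cdots$ in $G$ exhausting $\bigcup G$ on each bounded interval, so that $h_{p_j} \to f$ and $h_{p_j\iv} \to f\iv$ pointwise. Since every $h_{p_j}$ satisfies $\mu(Z^{h_{p_j}}_\ell) < \Ups(\ell)$ with $\sum_\ell \ell \, \Ups(\ell) < \infty$, Lemma \ref{lemma-bound-jac} produces a single $\delta(\varepsilon)$ valid for every $j$; Lemma \ref{lemma-equi-ac} applied to $h_{p_j\iv} \to f\iv$ gives absolute continuity of $f$, and the symmetric argument gives absolute continuity of $f\iv$, completing BAC.
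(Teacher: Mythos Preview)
Your overall plan is sound, but the ccc argument contains a real gap, and it is not a detail that can be patched within your framework of a \emph{fixed} $\Ups$.

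The claim ``a continuity estimate then shows that $h_{p_\alpha \cup p_\beta}$ has slopes close to those of $h_{p^*}$'' is false. When you amalgamate $p_\alpha$ and $p_\beta$, consecutive points of $\sigma_\alpha \cup \sigma_\beta$ now include the pairs $(d_\alpha^i, e_\alpha^i)$ and $(d_\beta^i, e_\beta^i)$, and on that short interval the slope of $h_{p_\alpha \cup p_\beta}$ is
\[
s_i \;=\; \frac{e_\beta^i - e_\alpha^i}{d_\beta^i - d_\alpha^i}\,,
\]
a ratio of two quantities you have merely made \emph{small}, not comparable. Lemma~\ref{lemma-elem-ccc} gives you no control over these ratios, and you cannot thin the family to bound them uniformly: that would amount to making the map $d_\alpha^i \mapsto e_\alpha^i$ bi-Lipschitz on an uncountable set, which is exactly the strengthening that fails in $\ZFC$ (see the remarks following Theorem~\ref{thm-baum-cant}). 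So for each $i$ the amalgamated condition acquires a new interval of domain-length $|d_\beta^i - d_\alpha^i|$ whose slope can land in $[\ell-1,\ell]$ for \emph{arbitrarily large} $\ell$. To keep $\mu(Z^{h_q}_\ell) < \Ups(\ell)$ you would need $\Ups(\ell)$ to exceed this length, which is of order $\varepsilon/\ell$; but $\Ups(\ell) \ge c/\ell$ for a fixed $c>0$ and all large $\ell$ is incompatible with $\sum_\ell \ell\,\Ups(\ell) < \infty$.

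The paper's remedy is to make $\Ups$ part of the condition: $p = (\sigma_p, \Ups_p)$ with $\Ups_p$ a \emph{finite} sequence, together with a slope bound (item~(4)) tying $m_p = \dom(\Ups_p)$ to the slopes of $h_{\sigma_p}$. The ordering also requires $\Ups_q \supseteq \Ups_p$. Then in the ccc step one takes $\sigma_q = \sigma_\alpha \cup \sigma_\beta$ and \emph{extends} $\Ups$ to a longer $\hat\Ups$, assigning to each new $\ell \in [m, \hat m)$ a value of size roughly $c_\ell \varepsilon / (t\ell)$, where $c_\ell$ counts how many $i$ produce a new slope (or inverse slope) in $[\ell-1,\ell]$. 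Since each $i$ contributes to at most two such $\ell$, the added mass $\sum_{m \le \ell < \hat m} \ell\,\hat\Ups(\ell)$ is bounded by a fixed multiple of $\varepsilon$, preserving condition~(2). Your density argument, by contrast, survives essentially unchanged in this setup, since adding a single point does let you control the new slopes by continuity.
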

\begin{proof}
As remarked in Section \ref{sec-basic},
it is enough to assume $\CH$ and construct a ccc poset
and prove that $V[G]$ contains the required $f$. 
Let $\PPP$ be the set of all pairs $p = (\sigma_p, \Ups_p)$ such that
\begin{itemizz}
\item[1.]
$\sigma_p \in \PPP_0$ and
$\Ups_p \in (\QQQ \cap (0,\infty) )  ^{< \omega}$;
let $m_p = \dom(\Ups_p) $.
\item[2.]
$\sum \{ \ell   \Ups_p(\ell) : \ell \ge 3 \ \&\ \ell < m_p \} < 1$.
\item[3.]
Whenever $3 \le \ell  < m_p$:
$\mu(Z^{h_{\sigma_p}}_\ell) < \Ups_p(\ell)$ and
$\mu(Z^{h_{\sigma_p\iv}}_\ell) < \Ups_p(\ell)$.
\item[4.]
$1/\max(2, m_p-1) < h'_\sigma(x) < \max(2, m_p-1)$ for all
$x \notin \dom(\sigma)$.
\end{itemizz}
In (3), $h_\sigma$ is as defined in the proof of Example \ref{ex-not-ac}.
Define $q \le p$ iff $\sigma_q \le \sigma_p$ and $\Ups_q \le \Ups_p$,
so $\one = (\emptyset,\emptyset)$.

Working in $V[G]$, let $f = \cl(\bigcup \{ \sigma_p : p \in G\})$;
$f(D) = E$ because
$\{p : d \in \dom(\sigma_p)\}$ and
$\{p : e \in \ran(\sigma_p)\}$ are dense whenever $d \in D$ and $e \in E$.
Let $\Ups = \bigcup \{ \Ups_p : p \in G\}$; $\dom(\Ups) = \omega$ because,
by Condition (4), the sets $\{p : m_p > \ell\}$ are dense.
Note that $\sum_{\ell \ge 3} \ell   \Ups(\ell)  \le 1 $.
We next prove that $f$ is AC (the proof for $f\iv$ is similar):

First note that for all $p \in G$ and all $\ell \ge 3$,
$\mu(Z^{h_{\sigma_p}}_\ell) < \Ups(\ell)$:  For $\ell < m_p$,
this is clear by (3), while for $\ell \ge m_p$,
$Z^{h_{\sigma_p}}_\ell = \emptyset$ by (4).

For $\varepsilon > 0$, choose $\delta = \delta_\varepsilon$ as follows:
choose $k \ge 2$ so that $\sum_{\ell > k}
\Ups(\ell) < \varepsilon/2$; then let $\delta = \varepsilon/(2k)$. 
Now, for $p = (\sigma_p, \Ups_p) \in G$, if $h = h_{\sigma_p}$
and $k \ge 2$:
$
\int_{W^h_k} h'(x)\, dx \le \sum_{\ell > k} \ell \mu(Z^h_\ell)
\le \sum_{\ell > k} \ell \Ups(\ell) 
$
by Lemma \ref{lemma-psi-bound}.  By Lemma \ref{lemma-bound-jac},
$\mu(U)  < \delta_\varepsilon \to \mu(h(U)) < \varepsilon$
for all Borel $U$.

Next, choose $p_j \in G$ for $j\in\omega$ such that 
$ h_{\sigma_{p_j}} \to f$
and
$ h\iv_{\sigma_{p_j}} \to f\iv$ pointwise.  To do this, choose $p_j$
so that $\dom(\sigma_{p_j})$  and $\ran(\sigma_{p_j})$ both meet the interval
$[a 2^{-j}, (a+1) 2^{-j}]$ for all $a \in \ZZZ \cap [-2^{2j}, 2^{2j}]$.
Then, $f$ is AC by Lemma \ref{lemma-equi-ac}.

Back in $V$, we need to prove that $\PPP$ is ccc, so
fix $p_\alpha \in \PPP$ for $\alpha < \omega_1$; we shall find
$\alpha \ne \beta$ with $p_\alpha \compat p_\beta $.
WLOG, each $p_\alpha = (\sigma_\alpha, \Ups)$,
with $m = \dom(\Ups) \ge 3$.  We may also assume that
each $|\sigma_\alpha| = t \ge 1$,
and $\sigma_\alpha = \{(d_\alpha^i, e_\alpha^i) : i < t\}$.
Further, we may assume that
$d_\alpha^i  < d_\beta^j$ and $e_\alpha^i  < e_\beta^j$ holds
whenever $i < j$ and $\alpha,\beta < \omega_1$.

Now, since $\PPP_0$ is ccc,
fix $\alpha \ne \beta$ with $\sigma_\alpha \compat \sigma_\beta $;
we shall get a $q = ( \sigma_\alpha \cup \sigma_\beta, \hat \Ups)$
such that $q \le p_\alpha$ and $q \le p_\beta$.
So $\hat m = \dom(\hat \Ups) \ge m$ and $\hat \Ups \supseteq \Ups$.
Taking $\hat \Ups = \Ups$ need not work because then
$q$ may fail to be in $\PPP$ because (3) or (4) could fail.
To partly handle (3),
we assume that there is some fixed rational $\varepsilon > 0$ such that
$\mu(Z^{h_{\sigma_\alpha}}_\ell) < \Ups(\ell) - \varepsilon$ and
$\mu(Z^{h_{\sigma_\alpha\iv}}_\ell) < \Ups(\ell) - \varepsilon$
holds for each $\alpha$ whenever $3 \le \ell  < m_p$,
and that  $\sum \{ \ell \Ups(\ell) :
\ell \ge 3 \ \&\ \ell < m \} < 1 -  \varepsilon$,
and that the $\sigma_\alpha$ are close enough together 
that for each $\alpha,\beta$,
$| d_\alpha^i -  d_\beta^i | < \varepsilon /(4 t)$ and
$| e_\alpha^i -  e_\beta^i | < \varepsilon /(4 t)$.
Furthermore, assume that for each $i $ with $i + 1 < t$,
and each integer $\ell$,
if the slope
$ ( e_\alpha^{i+ 1} - e_\alpha^i) / ( d_\alpha^{i+ 1} - d_\alpha^i) 
\notin [\ell - 1, \ell]   $
holds for some $\alpha$, then
$ ( e_\beta^{i+ 1} - e_\alpha^i) / ( d_\beta^{i+ 1} - d_\alpha^i) 
\notin [\ell - 1, \ell]   $
holds for all $\alpha,\beta$;
and, likewise,
for the slope of the inverse, 
$ ( d_\alpha^{i+ 1} - d_\alpha^i) / ( e_\alpha^{i+ 1} - e_\alpha^i) $.
This cures the problem with (3) for $\ell < m$.

However, (4) might fail
for $q$ because there is no way to bound, below or above,
the slope between a pair of points
$(d_\alpha^i, e_\alpha^i)$ and $(d_\beta^i, e_\beta^i)$.  
Let $\hat m$ be the smallest number $\ge m$ that makes (4) hold.
If $\hat m = m$, we are done.  Otherwise:

Let $\sigma = \sigma_\alpha \cup \sigma_\beta$.
When $m \le \ell < \hat m$, let $c_\ell = |C_\ell|$,
where $C_\ell = C_\ell^A \cup C_\ell^B$ and
\[
\begin{array}{l} 
C_\ell^A = \{i < t :\; (d_\alpha^i, e_\alpha^i) \ne (d_\beta^i, e_\beta^i)
\,\wedge\,
( e_\beta^{i} - e_\alpha^i) / ( d_\beta^{i} - d_\alpha^i)
\in [\ell-1, \ell] \} \\
C_\ell^B = \{i < t :\; (d_\alpha^i, e_\alpha^i) \ne (d_\beta^i, e_\beta^i)
\,\wedge\,
( d_\beta^{i} - d_\alpha^i) / ( e_\beta^{i} - e_\alpha^i)
\in [\ell-1, \ell] \} \ \ .
\end{array} 
\]
Let $\hat\Ups(\ell) =  (c_\ell \varepsilon)/ (2 t \ell)  $.
Note that $ C_\ell^A \cap C_k^B = \emptyset$, so no $i$ lies
in more than two of the $C_\ell$, so 
$\sum_{m \le \ell < \hat m  } c_\ell \le 2t$, and hence
$\sum_{m \le \ell < \hat m  } \ell  \hat\Ups(\ell) \le \varepsilon$,
which gives us (2); that is,
$\sum \{ \ell \hat\Ups(\ell) : \ell \ge 3 \ \&\ \ell < \hat m \} < 1 $.
To verify (3) when $m \le \ell < \hat m$, note that,
using $| e_\alpha^i -  e_\beta^i | < \varepsilon /(4 t)$:\;
$\mu(Z^{h_{\sigma}}_\ell)  \le 
\sum_{i \in C_\ell^A} | d_\beta^i - d_\alpha^i| \le
c_\ell \cdot  \varepsilon /(4 t (\ell - 1))  <
c_\ell \cdot  \varepsilon /(2 t \ell) $;
to bound $\mu(Z^{h_{\sigma\iv}}_\ell)$, use $C_\ell^B$.
\end{proof}

In the higher dimensional case, we have no natural analog of 
$h_\sigma$; instead, our side conditions will include a function chosen
from $\FF_\theta$, defined below.   First,
a remark on norms; we use the Pythagorean norm on vectors in $\RRR^n$
and the operator norm on matrices:

\begin{definition}
\label{def-norms}
For $\vec v \in \RRR^n$, let
$\| \vec v \| =  (\sum_{i < n} (v_i)^2) ^ {1/2}$, and
when $Y$ is an $n\times n$ matrix,
let $\|Y\| =\sup \{ \| Y \vec v \| : \vec v \in S^{n-1} \}$.
\end{definition}

\begin{definition}
\label{def-FF}
When $\theta>0$, let $\FF_\theta = \FF^n_\theta$
denote the set of all $f$ such that:
\begin{itemizz}
\item[1.] $f$ is a bijection from $\RRR^n$ onto $\RRR^n$.
\item[2.] $f$ and $f\iv$ are $C^1$.
\item[3.] $\exists r \;
\exists \vec c \;
\forall \vec x\;
[\|\vec x\| \ge r \to f(\vec x) =  \vec c + \vec x]$.
\item[4.] $\tw(f) < \theta$.
\end{itemizz}
\end{definition}

Applying (2)(3),

\begin{lemma}
If $f \in \FF_\theta$,
then $f\iv \in \FF_\theta$, and $f$ and $f\iv$ are BAC.
\end{lemma}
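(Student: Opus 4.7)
\medskip
\noindent\textbf{Proof proposal.}
The plan is to verify the four clauses of Definition \ref{def-FF} for $f\iv$ and then deduce BAC from a straightforward Jacobian bound. The key structural observation is that because $f$ coincides with the translation $\vec x \mapsto \vec c + \vec x$ outside a ball of radius $r$, the Jacobian $J_f$ equals the identity matrix off a compact set; since $f$ is $C^1$ on $\RRR^n$, both $\|J_f\|$ and $|\det J_f|$ are bounded globally. The same will then apply to $f\iv$.

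For $f\iv \in \FF_\theta$: clause (1) is immediate, and (2) is just clause (2) for $f$ with the roles of $f$ and $f\iv$ swapped, using $(f\iv)\iv = f$. For clause (3), if $f(\vec x) = \vec c + \vec x$ for $\|\vec x\| \ge r$, then inverting gives $f\iv(\vec y) = \vec y - \vec c$ whenever $\vec y = \vec c + \vec x$ for some $\|\vec x\| \ge r$; by the triangle inequality this certainly holds for all $\|\vec y\| \ge r + \|\vec c\|$, so (3) holds for $f\iv$ with constant $-\vec c$. Clause (4) reduces to the identity $\tw(f\iv) = \tw(f)$: if we replace $F$ by $F\iv = \{(e,d) : (d,e) \in F\}$ in Definition \ref{def-twist}, then $\twist(F\iv) = \{\angle(e_1 - e_0, d_1 - d_0)\}$, and since $\angle(\vec v, \vec w) = \angle(\vec w, \vec v)$ this set equals $\twist(F)$.

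For BAC: by the observation above, $M := \sup_{\vec x \in \RRR^n} |\det J_f(\vec x)|$ is finite, since $|\det J_f|$ is continuous on the compact set $\overline{B_r(\vec 0)}$ and equals $1$ outside. Because $f$ is a $C^1$ diffeomorphism of $\RRR^n$, the change-of-variables formula gives $\mu(f(U)) = \int_U |\det J_f(\vec x)|\, d\vec x \le M \mu(U)$ for every Borel (in particular open) $U \subseteq \RRR^n$. Thus taking $\delta = \varepsilon/M$ in Definition \ref{def-BAC} witnesses that $f$ is absolutely continuous. Since we have already shown $f\iv \in \FF_\theta$, the same argument applied to $f\iv$ shows that $f\iv$ is absolutely continuous, so $f$ is BAC.

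There is no real obstacle here: every step is essentially a one-line check, with the only content being the symmetry of $\angle$ (which makes $\tw$ invariant under graph-inversion) and the boundedness of $|\det J_f|$ (which follows from the compact-support hypothesis (3) together with the $C^1$ hypothesis (2)).
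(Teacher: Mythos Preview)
Your proof is correct and is precisely the straightforward verification the paper alludes to with its terse ``Applying (2)(3)'': you have simply written out the details---the symmetry $\tw(f)=\tw(f\iv)$ for clause (4), the translation behavior for (3), and the global Jacobian bound coming from $C^1$ plus compact support to get BAC via change of variables. There is no difference in approach.
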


We remark that replacing ``bijection'' by ``injection'' in (1)
results in an equivalent definition:

\begin{lemma}
\label{lemma-inj-surj}
Assume that $f: \RRR^n \to \RRR^n$ is 1-1 and continuous and satisfies $(3)$
above.  Then $f$ is a bijection.
\end{lemma}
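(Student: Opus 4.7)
The plan is to prove surjectivity by combining Brouwer's invariance of domain theorem with the asymptotic-identity condition~(3). Setting $K := \RRR^n \setminus f(\RRR^n)$, the goal becomes $K = \emptyset$.

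First, since $f: \RRR^n \to \RRR^n$ is a continuous injection, invariance of domain gives that $f$ is an open map, so $f(\RRR^n)$ is open and $K$ is closed. Next, condition (3) supplies some $r$ and $\vec c$ such that $f(\vec x) = \vec c + \vec x$ whenever $\|\vec x\| \ge r$, so $f(\RRR^n) \supseteq \{\vec y : \|\vec y - \vec c\| \ge r\}$. Therefore $K$ is contained in the open ball of radius $r$ about $\vec c$; in particular $K$ is bounded, hence compact.

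Suppose for contradiction $K \ne \emptyset$. Since $\RRR^n$ is connected and both $f(\RRR^n)$ and $K$ are nonempty, $K$ cannot also be open, so there is a point $\vec y_0 \in K$ which is a limit of points $\vec y_k = f(\vec x_k) \in f(\RRR^n)$. I claim $\{\vec x_k\}$ is bounded: otherwise, along a subsequence one has $\|\vec x_k\| \ge r$, and then $\vec y_k = \vec c + \vec x_k$ would escape to infinity, contradicting $\vec y_k \to \vec y_0$. Passing to a convergent subsequence $\vec x_k \to \vec x_0$, continuity of $f$ yields $\vec y_0 = f(\vec x_0) \in f(\RRR^n)$, contradicting $\vec y_0 \in K$. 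Hence $K = \emptyset$ and $f$ is surjective.

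The only substantive input is invariance of domain; everything else is a standard bounded-complement argument. The subtlest step, and the one I would write out most carefully, is the extraction of the point $\vec y_0 \in K$ that is also a limit of image points: it rests on the connectedness of $\RRR^n$ together with the facts that $K$ is closed and nonempty and $f(\RRR^n)$ is nonempty and open. Note that this argument uses only continuity and injectivity of $f$ together with (3); the $C^1$ hypothesis from Definition \ref{def-FF} plays no role, matching the stated generality of the lemma.
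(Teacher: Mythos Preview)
Your proof is correct, but it takes a different route from the paper's. The paper argues by contradiction as well, assuming some $\vec d\notin\ran(f)$ and (after translating so $\vec d=\vec 0$) building the maps $h_t:S^{n-1}\to S^{n-1}$, $h_t(\vec v)=f(t\vec v)/\|f(t\vec v)\|$; then $h_0$ is constant while $h_t$ tends uniformly to the identity as $t\to\infty$, so the identity on $S^{n-1}$ would be nullhomotopic, which is impossible for $n\ge 2$ (the $n=1$ case being handled separately by the Intermediate Value Theorem).

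Your argument instead invokes invariance of domain to make $f(\RRR^n)$ open, and then a properness-type observation from condition~(3) to make $f(\RRR^n)$ closed, whence surjectivity by connectedness of $\RRR^n$. Both proofs ultimately rest on a nontrivial topological fact of comparable depth (non-contractibility of $S^{n-1}$ versus Brouwer's invariance of domain, which are closely related), so neither is really more elementary. Your version has the minor advantage of handling all $n\ge 1$ uniformly and of being a recognizable ``open, closed, nonempty in a connected space'' template; the paper's version is a bit more self-contained in that it spells out the homotopy explicitly rather than citing invariance of domain as a black box.
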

\begin{proof}
If $n = 1$, this is obvious by the Intermediate Value Theorem,
so assume that $n > 1$.  Now, assume that $\vec d \notin \ran(f)$.
Replacing $f$ by $\vec x \mapsto x - \vec d$, we may assume
that $\vec 0 \notin \ran(f)$.

Define $\rho(\vec y) = \vec y / \|\vec y\|$,
so $\rho$ is the natural retraction of 
$\RRR^n \backslash \{0\}$ onto  $S^{n-1}$.
For $t \in [0, \infty)$, define $h_t : S^{n-1} \to S^{n-1}$ by
$h_t(\vec v) = \rho(f(t \vec v))$.  Then $h_0$ is the constant map
$\vec v \mapsto \rho(f(\vec 0))$.
Fix $r, \vec c$ as in (3).  For $t \gg \max(r, \| \vec c \|)$,
$h_t(\vec v) = ( \vec c + t \vec v) / \| \vec c + t \vec v \| \approx
t \vec v / t = \vec v$, so $h_t$ converges uniformly to the identity
map as $t \to \infty$.  But then, the identity map on $S^n$ 
is homotopic to a constant map, which is impossible.
\end{proof}

Another simple remark:

\begin{lemma}
If $f \in \FF_\theta$, then $\det J_f(\vec x)  > 0$ for all $\vec x$.
\end{lemma}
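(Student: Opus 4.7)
The plan is to combine three simple observations: $\det J_f$ is continuous, it vanishes nowhere, and it equals $1$ far from the origin, so by connectedness of $\RRR^n$ it must be positive everywhere.

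First I would note that condition (2) of Definition \ref{def-FF} says that both $f$ and $f\iv$ are $C^1$. Applying the chain rule to the identity $f\iv \circ f = \mathrm{id}_{\RRR^n}$ gives $J_{f\iv}(f(\vec x)) \cdot J_f(\vec x) = I$ for every $\vec x \in \RRR^n$, so $J_f(\vec x)$ is invertible and hence $\det J_f(\vec x) \ne 0$ everywhere. In particular the continuous function $\vec x \mapsto \det J_f(\vec x)$ never vanishes on the connected space $\RRR^n$, so by the Intermediate Value Theorem it has constant sign.

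Next I would pin down that sign by using condition (3): fix $r$ and $\vec c$ such that $f(\vec x) = \vec c + \vec x$ whenever $\|\vec x\| \ge r$. For any such $\vec x$, each partial derivative $\partial_i f(\vec x)$ is the $i^{\mathrm{th}}$ standard basis vector, so $J_f(\vec x) = I$ and $\det J_f(\vec x) = 1 > 0$. Combined with the constant-sign conclusion, this forces $\det J_f(\vec x) > 0$ for all $\vec x \in \RRR^n$.

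There is no real obstacle here; the twist condition (4) plays no role, and the argument is just the standard fact that a $C^1$ diffeomorphism of $\RRR^n$ that is the identity (up to translation) at infinity is necessarily orientation-preserving.
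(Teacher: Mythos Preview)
Your proof is correct and matches the paper's argument exactly: nonvanishing of $\det J_f$ from (2), the value $1$ at infinity from (3), and connectedness of $\RRR^n$ to conclude. You have simply spelled out the chain-rule justification for invertibility that the paper leaves implicit.
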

\begin{proof}
$\det J_f(\vec x)  \ne 0$ for all $\vec x$ by (2), and
$\det J_f(\vec x)  = 1$ for large enough $\vec x$ by (3), so use the
fact that $\RRR^n$ is connected.
\end{proof}

Some more notation on norms:

\begin{definition}
\label{def-dist}
For $f: \RRR^n \to \RRR^n$, 
$\|f\| = \sup\{ \|f(x)\| : x \in \RRR^n \}$, and
$\|J_f\| = \sup\{ \|J_f(x)\| : x \in \RRR^n \}$.

For $f,g \in \FF_\theta$, let
$d(f,g) =
 \max( \| f - g \| , \| f\iv  - g\iv \| ) $.
Then, the ball $B(f,\varepsilon) = \{g \in \FF_\theta :
d( f , g ) < \varepsilon\}$.
\end{definition}

Of course $\|f\|$ and/or $\|J_f\|$ may be $\infty$,
and $\|J_f\|$ is only defined when $f$ is differentiable.
When $f,g \in \FF_\theta$, $\|f\| = \infty $, but
$d( f , g ) < \infty$ and $\|J_f\| < \infty$.

For forcing, it will be convenient to use 
the distance function $d$,
since it preserves the symmetry between $f$ and $f\iv$:

\begin{definition}
\label{def-poset}
Following the terminology of Definition \ref{def-poset-approx},
and assuming $\CH$,
let $\PPP^\theta$ be $\one$ together with the set of all 
quadruples $p = (\sigma_p, h_p, \varkappa_p, \Ups_p)$ such that:

\begin{itemizz}
\item[1.]
$\sigma_p \in \PPP^\theta_0$ and
$\Ups_p \in (\QQQ \cap (0,\infty) )  ^{< \omega}$;
let $m_p = \dom(\Ups_p) $.
\item[2.]
$\sum \{ \ell   \Ups_p(\ell) : \ell \ge 3 \ \&\ \ell < m_p \} < 1$.
\item[3.] $ h_p \in \FF_\theta$ and $h_p \supseteq \sigma_p$.
\item[4.] $ \varkappa_p$ is a positive rational number.
\item[5.]
Whenever $3 \le \ell  < m_p$:
$\mu(Z^{h_{p}}_\ell) < \Ups_p(\ell)$ and
$\mu(Z^{h_{p}\iv}_\ell) < \Ups_p(\ell)$.
\item[6.]
$1/\max(2, m_p -1) <   \det J_{h_p}(x) < \max(2, m_p -1)$ for all $x$.
\end{itemizz}
Define $q \le p$ iff $p = \one$ or $p,q$ are quadruples with 
$\sigma_q \supseteq \sigma_p$
and
$\Ups_q \supseteq \Ups_p$
and
$\varkappa_q \le \varkappa_p$
and
$B(h_q, \varkappa_q) \subseteq B(h_p, \varkappa_p)$.
\end{definition}

So, $h_p$ is an approximation to the $f$ that we are constructing,
and $\varkappa_p$ is a ``promise'' that this $f$ will
satisfy $d(f, h_p) \le \varkappa_p$.
There is no natural $\one$ in this poset, so we added one artificially,
on top of all the ``natural'' forcing conditions.
Note that $(\sigma, h, \varkappa', \Ups) \le (\sigma, h, \varkappa, \Ups)$
always holds whenever $  \varkappa' \le \varkappa$. 
Also, by (6):

\begin{lemma}
\label{lemma-dense-m}
$\{p : m_p > \ell\}$ is dense for each $\ell$.
\end{lemma}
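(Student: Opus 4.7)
The plan is to extend $p$ without altering $\sigma_p$, $h_p$, or $\varkappa_p$, by merely lengthening the finite sequence $\Ups_p$. First I dispatch the trivial case $p = \one$ by exhibiting any natural quadruple below $\one$ (e.g.\ $\sigma = \emptyset$, $h$ the identity on $\RRR^n$, $\varkappa = 1$, and a $\Ups$ of some length $\ge 3$ with tiny positive values); one then applies the main case to this quadruple.

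Now suppose $p = (\sigma_p, h_p, \varkappa_p, \Ups_p)$ is a quadruple. Set $\sigma_q = \sigma_p$, $h_q = h_p$, $\varkappa_q = \varkappa_p$, $m_q = \max(m_p, \ell + 1)$, and take $\Ups_q \supseteq \Ups_p$ with values on $\{m_p, \ldots, m_q - 1\}$ still to be specified. Condition (6) for $q$ follows immediately from (6) for $p$ since $\max(2, m_q - 1) \ge \max(2, m_p - 1)$, so the permitted range for $\det J_{h_p}$ only widens.

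The key observation is that (6) for $p$ gives $|\det J_{h_p}(x)| < \max(2, m_p - 1)$ for all $x$, and the analogous bound for $|\det J_{h_p\iv}|$ follows from $J_{h_p\iv}(h_p(x)) = J_{h_p}(x)\iv$ together with the lower bound in (6). Hence $Z^{h_p}_{\ell'} = Z^{h_p\iv}_{\ell'} = \emptyset$ whenever $\ell' \ge \max(3, m_p)$, so condition (5) for $q$ is automatic for \emph{any} positive rational choice of $\Ups_q(\ell')$ on the new indices (and for $m_p \le \ell' < 3$ condition (5) is vacuous). Finally, condition (2) for $p$ leaves a strictly positive slack $s := 1 - \sum\{\, \ell' \Ups_p(\ell') : 3 \le \ell' < m_p\,\}$, and the new $\Ups_q(\ell')$ are simply chosen as positive rationals small enough that $\sum_{m_p \le \ell' < m_q} \ell' \Ups_q(\ell') < s$; this is trivially possible since only finitely many new indices are involved. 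Then $q \in \PPP^\theta$, $q \le p$, and $m_q > \ell$, as required.

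There is no real obstacle here; the whole content of the argument is the observation that condition (6) forces $Z^{h_p}_{\ell'}$ and $Z^{h_p\iv}_{\ell'}$ to vanish for all sufficiently large $\ell'$, so extending $\Ups_p$ is free to inflate $m_p$ at will.
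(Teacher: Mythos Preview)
Your argument is correct and is exactly the elaboration the paper intends: the paper's entire proof is the phrase ``by (6)'', and you have spelled out precisely why condition (6) makes $Z^{h_p}_{\ell'}$ and $Z^{h_p\iv}_{\ell'}$ empty for $\ell' \ge \max(3,m_p)$, so that $\Ups_p$ can be extended freely.
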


Also, we note that we can make a ``small change'' in $h_p$ and obtain
an extension of $p$:

\begin{lemma}
\label{lemma-smallchange}
For each $p = (\sigma, h, \varkappa_p, \Ups_p) \in \PPP^\theta$,
there is a rational $\zeta = \zeta_p > 0$ such that for all $g \in \FF_\theta$:

\emph{If} $d(g,h) < \varkappa_p$, and $g \supseteq \sigma$, and
$\mu(\overline S), \mu(\overline T) \le \zeta$, where
$S = \{x : g(x) \ne h(x)\}$ and
$T =  \{y : g\iv(y) \ne h\iv(y)\}$, \emph{then}
there is a $q \le p$ of the form $q = (\sigma, g, \varkappa_q, \Ups_q) $.
\end{lemma}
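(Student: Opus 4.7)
The plan is to build $q = (\sigma_p, g, \varkappa_q, \Ups_q)$ one piece at a time, and choose $\zeta_p$ so the construction succeeds uniformly in $g$. Take $\varkappa_q$ to be any positive rational $\le \varkappa_p - d(g,h_p)$, which is nonempty since $d(g,h_p) < \varkappa_p$; the triangle inequality then gives $B(g,\varkappa_q) \subseteq B(h_p, \varkappa_p)$. Choose $m_q \ge m_p$ large enough to secure clause (6) for $g$; this is possible because $g \in \FF_\theta$ forces $\det J_g$ to be continuous, strictly positive, and identically $1$ outside a compact set. Set $\Ups_q(\ell) = \Ups_p(\ell)$ for $\ell < m_p$, leaving the values of $\Ups_q(\ell)$ on $m_p \le \ell < m_q$ to be chosen rationally below.

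The key observation is that $g = h_p$ off the open set $S$, so $J_g = J_{h_p}$ on $\RRR^n \setminus \overline S$; analogously $J_{g\iv} = J_{h_p\iv}$ off $\overline T$. Consequently
\[
\mu(Z^g_\ell) \le \mu(Z^{h_p}_\ell) + \mu(\overline S) \le \mu(Z^{h_p}_\ell) + \zeta,
\]
and similarly for $Z^{g\iv}_\ell$. Hence clause (5) of $q$ on $3 \le \ell < m_p$ follows as long as
\[
\zeta < \varepsilon := \min_{3 \le \ell < m_p}\min\bigl\{\Ups_p(\ell) - \mu(Z^{h_p}_\ell),\; \Ups_p(\ell) - \mu(Z^{h_p\iv}_\ell)\bigr\},
\]
a positive minimum of finitely many positive quantities.

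For the tail $\ell \ge \max(m_p,3)$ put $k := \max(m_p - 1, 2)$. Clause (6) for $p$ gives $|\det J_{h_p}| < k$ everywhere, so Jacobian agreement off $\overline S$ forces $W^g_k \subseteq \overline S$; the dual argument via $|\det J_{g\iv}(y)| = 1/|\det J_{h_p}(h_p\iv(y))| > 1/k$ for $y$ off $\overline T$ yields $g(W^g_k) \subseteq \overline T$. Lemma \ref{lemma-psi-bound} then delivers
\[
\sum_{\ell > k} \ell\,\mu(Z^g_\ell) \le 3\mu(g(W^g_k)) \le 3\zeta,\qquad
\sum_{\ell > k} \ell\,\mu(Z^{g\iv}_\ell) \le 3\zeta.
\]
Choose each rational $\Ups_q(\ell)$ just above $\max(\mu(Z^g_\ell), \mu(Z^{g\iv}_\ell))$, with upward slack $\tau_\ell > 0$ small enough that $\sum_{m_p \le \ell < m_q} \ell\,\tau_\ell \le \eta/2$, where $\eta := 1 - \sum_{3 \le \ell < m_p} \ell\,\Ups_p(\ell) > 0$. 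Clause (5) for $q$ on this range is then automatic, and clause (2) for $q$ totals at most $(1-\eta) + 6\zeta + \eta/2$, which is $<1$ whenever $\zeta < \eta/12$. So $\zeta_p$ may be taken to be any positive rational below $\min(\varepsilon,\eta/12)$.

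The main obstacle is this tail estimate: $m_q$ depends on the unknown $g$ and may be arbitrarily large, ruling out any uniform pointwise bound on $\det J_g$. The fix is Lemma \ref{lemma-psi-bound} paired with $g(W^g_k) \subseteq \overline T$: the ``small measure'' hypothesis $\mu(\overline T) \le \zeta$ catches all of the large-Jacobian mass of $g$ and converts it into a uniform weighted sum bound, making $\zeta_p$ selectable from $p$ alone.
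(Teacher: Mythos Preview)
Your proof is correct and follows essentially the same approach as the paper's: choose $\zeta_p$ small relative to both the slack in clause (5) for $\ell < m_p$ and the slack in clause (2), then control the tail $\ell \ge \max(m_p,3)$ via Lemma~\ref{lemma-psi-bound} together with the containment $g(W^g_k) \subseteq \overline T$ (and its dual). The only cosmetic differences are the constants ($\eta/12$ versus the paper's $\eta/4$), your choice of $\Ups_q(\ell)$ just above the $\max$ rather than the sum, and your route to $g(W^g_k) \subseteq \overline T$ via $\det J_{g\iv}(y) \le 1/k$ rather than via $W^g_k \subseteq \overline S$ and $g(\overline S) = \overline T$.
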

\begin{proof}
Choose $\zeta $ so that:
(A)  $\zeta < \Ups_p(\ell) - \mu(Z^{h}_\ell) $ and
$\zeta < \Ups_p(\ell) - \mu(Z^{h\iv}_\ell)$ for all $\ell < m_p$,
and
(B) $4 \zeta < 1 -
\sum \{ \ell   \Ups_p(\ell) : \ell \ge 3 \ \&\ \ell < m_p \}$.

For $q \le p$:  We need $\varkappa_q \le \varkappa_p$
and $B(g, \varkappa_q) \subseteq B(h, \varkappa_p)$,
and these are satisfied if we just choose
$\varkappa_q < \varkappa_p - d(g,h)$.

But we also need
$\Ups_q \supseteq \Ups_p$ (so $m_q \ge m_p$),
and we must be careful to define $q$ to satisfy $(1 - 6)$.
For (6), choose any $m_q \ge \max(2, m_p)$ such that
$1/ (m_q -1) <   \det J_{g}(x) < ( m_q -1)$ for all $x$.

For (5): (A) implies
that (5) (for $\ell < m_p$) continues to hold with $g$ replacing $h$.
If $m_q = m_p$, we are now done, so assume that $m_q > m_p$.
Also, assume that $m_q \ge 4$, since otherwise (5) and (2) are vacuous.

To ensure (5) when $\max(3, m_p) \le \ell < m_q$: choose rational
$\Ups_q(\ell)$ such that
$ \mu(Z^{g}_\ell) + \mu(Z^{g\iv}_\ell) < \Ups_q(\ell) <
 \mu(Z^{g}_\ell) + \mu(Z^{g\iv}_\ell)  + \zeta / m_q$.
But now for (2):  We've added
$\sum\{\ell \Ups_q(\ell) : \max(3, m_p) \le \ell < m_q \}$ 
to the $\sum$ in (2).
This amount is bounded above by $\zeta$ (from the $\zeta / m_q$ terms) plus
\[
\sum_{\ell > k} [ \ell \mu(Z^{g}_\ell) + \ell \mu(Z^{g\iv}_\ell)  ] \le
3 [ \mu(g( W^{g}_k)) + \mu(g\iv( W^{g\iv}_k)) ] \le
3 [ \mu(\overline T) + \mu(\overline S) ] \le 3 \zeta \, ,
\]
where $k = \max(3, m_p) - 1$ (see Lemma \ref{lemma-psi-bound}),
so we are done by (B).

To verify the second ``$\le$'' above, use
$g( W^{g}_k) \subseteq \overline T$ and
$g\iv( W^{g\iv}_k) \subseteq \overline S$.
To verify 
$g( W^{g}_k) \subseteq \overline T$, fix $x \in W^{g}_k$.
Then $\det J_g(x) \ge \max(3, m_p) - 1$.
But also $\det J_{h}(x) < \max(2, m_p -1)$,
so $J_g(x) \ne J_h(x)$, and hence $x \in \cl(S)$
so $g(x) \in \cl(g(S))$; but $g(S) = h(S) = T$ because
$g$ and $h$ are bijections.
\end{proof}

We now need the following two lemmas,
whose proofs are a bit more complex than the corresponding results
used in the proof of Lemma \ref{lemma-main-1}:

\begin{lemma}
\label{lemma-dense-de}
For $\vec d \in D$ and $\vec e \in E$, both sets
$\{p :  \vec d \in \dom(\sigma_p) \}$ and
$\{p :  \vec e \in \ran(\sigma_p) \}$  are dense in $\PPP^\theta$.
\end{lemma}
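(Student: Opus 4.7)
The plan is to show density of $\{p : \vec d \in \dom(\sigma_p)\}$; the other case is symmetric, obtained by swapping the roles of $D,E$ and $h_p, h_p\iv$. Fix $p = (\sigma_p, h_p, \varkappa_p, \Ups_p) \in \PPP^\theta$ with $\vec d \notin \dom(\sigma_p)$ (else $q := p$ works), and let $\vec y_0 := h_p(\vec d)$; since $h_p$ is a bijection, $\vec y_0 \notin \ran(\sigma_p)$. Only finitely many positive integers $k$ are \emph{bad}, in that $\hgt(\vec d) + k$ coincides with some $\hgt(\vec d_i)$, $\hgt(\vec e_i)$, or pair height appearing in $\sigma_p$; fix a good $k$ and set $\xi = \hgt(\vec d) + k$. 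Since $\{e \in E : \hgt(e) = \xi\}$ is dense in $\RRR^n$, choose $\vec e'$ in this set with $\vec \epsilon := \vec e' - \vec y_0$ of arbitrarily small positive norm. The target is $q = (\sigma_p \cup \{(\vec d, \vec e')\}, h_q, \varkappa_q, \Ups_q) \le p$ with $h_q \in \FF_\theta$ differing from $h_p$ only in a tiny neighborhood and sending $\vec d$ to $\vec e'$.

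To build $h_q$, fix $\rho > 0$ so small that $\overline{B(\vec y_0, \rho)} \cap \ran(\sigma_p) = \emptyset$ and a $C^\infty$ bump $\phi : \RRR^n \to [0,1]$ equal to $1$ in a neighborhood of $\vec y_0$ and supported in $B(\vec y_0, \rho)$. Set $\psi(\vec y) := \vec y + \phi(\vec y)\vec \epsilon$ and $h_q := \psi \circ h_p$. For $\|\vec \epsilon\| \cdot \|D\phi\|$ sufficiently small, $\det J_\psi$ stays positive and $\psi$ is an injective $C^1$ map equal to the identity outside $B(\vec y_0, \rho)$; Lemma \ref{lemma-inj-surj} makes $\psi$ bijective and hence a $C^1$ diffeomorphism. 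Consequently $h_q \in \FF_\theta$, $h_q$ agrees with $h_p$ on $\dom(\sigma_p)$ (since $\phi$ vanishes on $h_p(\dom(\sigma_p)) = \ran(\sigma_p)$), and $h_q(\vec d) = \psi(\vec y_0) = \vec e'$. For the twist: for any $\vec x_0 \ne \vec x_1$, writing $\vec u = h_p(\vec x_1) - h_p(\vec x_0)$, we have $h_q(\vec x_1) - h_q(\vec x_0) = \vec u + \delta$ with $\|\delta\| \le \min(1, \|D\phi\|\cdot\|\vec u\|)\|\vec \epsilon\|$, hence $\|\delta\|/\|\vec u\| \le \|D\phi\|\cdot\|\vec \epsilon\|$ uniformly. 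The angle between $\vec u$ and $\vec u + \delta$ is therefore $O(\|D\phi\|\cdot\|\vec \epsilon\|)$, and choosing $\|\vec \epsilon\|$ small relative to $\theta - \tw(h_p) > 0$ forces $\tw(h_q) < \theta$.

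Now shrink $\|\vec \epsilon\|$ and $\rho$ further so that $d(h_q, h_p) < \varkappa_p$ (immediate since $\|\psi - \mathrm{id}\|, \|\psi\iv - \mathrm{id}\| \le \|\vec \epsilon\|$) and so that the exceptional sets $S = \{x : h_q(x) \ne h_p(x)\}$ and $T = \{y : h_q\iv(y) \ne h_p\iv(y)\}$ of Lemma \ref{lemma-smallchange} have $\mu(\overline S), \mu(\overline T) \le \zeta_p$ (both are contained in sets of measure $O(\rho^n)$). Lemma \ref{lemma-smallchange} then produces $q' = (\sigma_p, h_q, \varkappa_{q'}, \Ups_{q'}) \le p$, and the final condition is $q := (\sigma_p \cup \{(\vec d, \vec e')\}, h_q, \varkappa_{q'}, \Ups_{q'}) \le q'$: conditions (3)--(6) of Definition \ref{def-poset} are inherited from $q'$, while $\sigma_q \in \PPP^\theta_0$ follows because (P3)(P4) were arranged by the choice of $k$ and (P1)(P2) follow from $\sigma_q \subseteq h_q$ together with $\tw(h_q) < \theta$. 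The main obstacle is the quantitative twist bookkeeping in the construction of $h_q$: $\|\vec \epsilon\|$ must be calibrated both to $1/\|D\phi\| \sim \rho$ (to keep $\psi$ a diffeomorphism) and to $\theta - \tw(h_p)$ (to preserve the twist bound), so the argument requires actual estimates rather than a topological limiting argument.
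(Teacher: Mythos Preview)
Your argument is correct and follows essentially the same strategy as the paper's proof: pick a height $\xi=\hgt(\vec d)+k$ avoiding the heights in $\sigma_p$, choose $\vec e'\in E$ at that height very close to $h_p(\vec d)$, perturb $h_p$ by a compactly supported bump to hit $\vec e'$, bound the change in twist by the ratio $\|\delta\|/\|\vec u\|$, and then invoke Lemma~\ref{lemma-smallchange} to produce $\varkappa_q,\Ups_q$.  The only cosmetic difference is that the paper writes the perturbation additively in the \emph{domain}, $h_q(\vec d+\vec v)=h(\vec d+\vec v)+\psi(\|\vec v\|)\vec a$, and packages the twist estimate as an application of Lemma~\ref{lemma-small-change}, whereas you compose with a near-identity map $\psi$ in the \emph{range} and do the twist bound by hand; your $h_q=\psi\circ h_p$ is just $h_p+(\phi\circ h_p)\vec\epsilon$, so the two constructions coincide up to reparametrization of the bump.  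One small point: your claim $d(h_q,h_p)\le\|\vec\epsilon\|$ is not quite right for the $\|h_q\iv-h_p\iv\|$ half, since $h_q\iv-h_p\iv=h_p\iv\circ\psi\iv-h_p\iv$ picks up a factor of $\|J_{h_p\iv}\|$; this is harmless (shrink $\|\vec\epsilon\|$ further) and the paper has the same factor via Lemma~\ref{lemma-small-change}.
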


\begin{lemma}
\label{lemma-ccc}
$\PPP^\theta$ is ccc whenever $\theta > \pi/2$.
\end{lemma}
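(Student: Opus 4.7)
The plan is to derive the ccc of $\PPP^\theta$ from that of $\PPP^\theta_0$ (Lemma \ref{lemma-elem-ccc}, which needs $\theta > \pi/2$), using Lemma \ref{lemma-smallchange} to manufacture a common $h_q \in \FF_\theta$. Given an uncountable family $\{p_\alpha = (\sigma_\alpha, h_\alpha, \varkappa_\alpha, \Ups_\alpha) : \alpha < \omega_1\}$, I first thin combinatorially by $\Delta$-system and pigeonhole to an uncountable $S_0$ on which $\varkappa_\alpha = \varkappa$, $\Ups_\alpha = \Ups$, $|\sigma_\alpha| = t$, the $\sigma_\alpha$ form a $\Delta$-system with (WLOG) empty root, and the heights of the pairs in $\sigma_\alpha$ are arranged as required by the proof of Lemma \ref{lemma-elem-ccc}; I also thin so that some fixed $\varepsilon > 0$ satisfies $\tw(h_\alpha) \le \theta - \varepsilon$ for all $\alpha \in S_0$ (possible by pigeonhole, since each $\tw(h_\alpha) < \theta$). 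Second, I thin topologically: by clause (3) of Definition \ref{def-FF} each $h_\alpha$ equals $\vec c_\alpha + \vec x$ outside $B(\vec 0, r_\alpha)$; after further pigeonhole fixing $r_\alpha \le r$ and $\vec c_\alpha$ in a bounded region, the restrictions $h_\alpha \res \overline{B(\vec 0, r)}$ and $h_\alpha\iv \res \overline{B(\vec 0, r')}$ lie in separable Polish spaces, so a cover-by-small-balls argument yields uncountable $S_2 \subseteq S_0$ on which $d(h_\alpha, h_\beta) < \eta$ for any prescribed $\eta > 0$.

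Third, I apply the proof of Lemma \ref{lemma-elem-ccc} to $\langle \sigma_\alpha : \alpha \in S_2 \rangle$ to find $\alpha < \beta$ in $S_2$ with $\sigma := \sigma_\alpha \cup \sigma_\beta \in \PPP^\theta_0$, and then build a common extension. I construct $g \in \FF_\theta$ by perturbing $h_\alpha$ with a smooth bump around each new domain point $d_\beta^i \notin \dom(\sigma_\alpha)$ that shifts $h_\alpha(d_\beta^i)$ to $e_\beta^i = h_\beta(d_\beta^i)$. Since the displacement has norm at most $d(h_\alpha, h_\beta) < \eta$, choosing the bump supports pairwise disjoint, disjoint from $\dom(\sigma_\alpha)$, and of radius large compared to $\eta$ yields a $C^1$ map $g$ with $\tw(g) < \theta$ (using the $\varepsilon$-slack), with $\det J_g$ still within the bounds of clause (6), and with $g$ injective, hence bijective by Lemma \ref{lemma-inj-surj}; moreover the modification sets $S = \{x : g(x) \ne h_\alpha(x)\}$ and $T$ can be forced to have Lebesgue measure below any prescribed $\zeta$. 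Lemma \ref{lemma-smallchange} then supplies $\Ups_q \supseteq \Ups$ and a rational $\varkappa_q$, which I shrink so that $d(g, h_\gamma) + \varkappa_q < \varkappa$ for both $\gamma \in \{\alpha, \beta\}$, giving $q_0 = (\sigma_\alpha, g, \varkappa_q, \Ups_q) \le p_\alpha$. Replacing $\sigma_\alpha$ by $\sigma$ in the first coordinate yields $q := (\sigma, g, \varkappa_q, \Ups_q)$, still a condition of $\PPP^\theta$ since $\sigma \in \PPP^\theta_0$ and $g \supseteq \sigma$; clearly $q \le q_0 \le p_\alpha$, and by the choice of $\varkappa_q$ also $q \le p_\beta$, witnessing compatibility.

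The main obstacle is the bump construction: ensuring $g \in \FF_\theta$ demands simultaneous control of the strict twist inequality $\tw(g) < \theta$, global bijectivity, and the Jacobian bounds of clause (6), all while interpolating through up to $t$ new points. The available slack comes from the $\varepsilon > 0$ of Step 1, the open margins $\Ups(\ell) - \mu(Z^{h_\alpha}_\ell)$ built into Lemma \ref{lemma-smallchange}, and the freedom from Step 2 to shrink $\eta$ as much as needed; the quantitative work lies in calibrating bump radii against these budgets so that each bump is small enough in $C^1$-norm to preserve all three bounds at once.
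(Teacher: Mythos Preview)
Your approach has a genuine gap at the bump construction. You want the bump around each $d_\beta^i$ to have support disjoint from $\dom(\sigma_\alpha)$, which forces the bump radius $\rho_i$ to satisfy $\rho_i < \|d_\beta^i - d_\alpha^i\|$. At the same time you want $\rho_i$ large compared to the displacement $\|e_\beta^i - h_\alpha(d_\beta^i)\|$, so that the bump's $C^1$ norm (of order displacement$/\rho_i$) is small enough to preserve the twist and Jacobian bounds. The problem is that Lemma~\ref{lemma-elem-ccc} gives you \emph{no lower bound} on $\|d_\beta^i - d_\alpha^i\|$: for any $\eta>0$ you fix in Step~2, the compatible pair $\alpha,\beta$ it returns may well have $\|d_\beta^i - d_\alpha^i\|\ll\eta$, while the displacement can still be of order $\eta$ (take $h_\alpha(d_\beta^i)\approx e_\alpha^i$ by Lipschitz, and note that $\|e_\beta^i - e_\alpha^i\|$ is only bounded by $L\|d_\beta^i-d_\alpha^i\|+\eta$, so can be $\approx\eta$). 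In that regime the bump gradient blows up and you lose both $\tw(g)<\theta$ and the determinant bounds. Shrinking $\eta$ further does not help, since the pair produced by the ccc lemma changes with it and the bad ratio can recur. This is exactly the ``no a~priori upper bound to $K=\|e\|/\|d\|$'' obstruction the paper flags.

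The paper's proof avoids this by a different mechanism. It first passes to \emph{nice} conditions (Lemma~\ref{lemma-nice-are-dense}), so each $h_\alpha$ is a pure translation on a ball of \emph{fixed} radius $r$ around each $d_\alpha^i$; then it thins in the stronger metric $\Delta$ so that the average $\hat h=(h_\alpha+h_\beta)/2$ lies in $\FF_{\hat\theta}$ (Lemma~\ref{lemma-average}) and is again a translation near the midpoint $\hat d^i$. The modification from $\hat h$ to $h_q$ is then carried out inside the fixed ball $B(\hat d^i,r/2)$ by the rotation--expansion construction of Lemma~\ref{lemma-step-1-2}, which is specifically engineered to send $d$ to $e$ with $\tw<\theta$ for \emph{arbitrary} ratio $\|e\|/\|d\|$. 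That lemma, not a translation bump, is what absorbs the uncontrolled ratio; a simple bump cannot. If you want to salvage your outline, you would need to replace the bump step by a device with this property, which is essentially the content of Section~\ref{sec-ccc}.
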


These lemmas will be proved in Sections \ref{sec-mat} and \ref{sec-ccc}, after
we prove some more facts about twists and Jacobians.

\begin{proofof}{Theorem \ref{thm-main}}
As in the proof of Lemma \ref{lemma-main-1}, it is enough to
assume $\CH$, construct $\PPP^\theta$ (which is ccc by Lemma \ref{lemma-ccc}),
and show that $V[G]$ contains the required $f$.  We again have
$f = \cl(\bigcup \{ \sigma_p : p \in G\})$
and $\Ups = \bigcup \{ \Ups_p : p \in G\}$.
Since $f$ and $f\iv$ are uniform limits of continuous bijections,
$f$ is a continuous bijection of $\RRR^n$ onto $\RRR^n$.
$\tw(f) \le \theta$ by Lemma \ref{lemma-cl-tw}.
Also, $f(D) = E$ by Lemma \ref{lemma-dense-de},
and absolute continuity for $f$ and $f\iv$
is proved as in Lemma \ref{lemma-main-1}.
\end{proofof}

\section{Twists and Jacobians}
\label{sec-mat}
\begin{definition}
\label{def-nice}
$p = (\sigma, h, \varkappa,\Ups) \in \PPP^\theta$ is \emph{nice}
iff for all $(\vec d, \vec e) \in \sigma$,
$h(\vec x) =  \vec x + \vec e - \vec d$
holds in some neighborhood of $\vec d$.
\end{definition}

\begin{lemma}
\label{lemma-nice-are-dense}
The set of all nice $p$ is dense in $\PPP^\theta$.
\end{lemma}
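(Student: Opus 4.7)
The plan is to apply Lemma~\ref{lemma-smallchange}. Given $p = (\sigma, h, \varkappa, \Ups) \in \PPP^\theta$ with its associated $\zeta_p > 0$, it suffices to exhibit a $g \in \FF_\theta$ satisfying $g \supseteq \sigma$, $d(g, h) < \varkappa_p$, and $\mu(\overline{\{\vec x : g(\vec x) \ne h(\vec x)\}}), \mu(\overline{\{\vec y : g\iv(\vec y) \ne h\iv(\vec y)\}}) \le \zeta_p$, with $g$ moreover a pure translation in a neighborhood of each $\vec d \in \dom(\sigma)$. The lemma will then deliver some $q = (\sigma, g, \varkappa_q, \Ups_q) \le p$, and by construction $q$ is nice.

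To build $g$, I would fix $r > 0$ small so that the balls $B(\vec d, r)$ for $\vec d \in \dom(\sigma)$ are pairwise disjoint and miss $\dom(\sigma)\backslash\{\vec d\}$, with $\mu(\bigcup B(\vec d, r)) \le \zeta_p$ and $\mu(\bigcup h(B(\vec d, r))) \le \zeta_p$. Choose smooth radial bumps $\alpha_{\vec d}$ supported in $B(\vec d, r)$ with $\alpha_{\vec d} \equiv 1$ on $B(\vec d, r/2)$, and set
\[
g(\vec x) \;=\; h(\vec x) \;+\; \sum_{(\vec d, \vec e) \in \sigma} \alpha_{\vec d}(\vec x)\bigl((\vec x + \vec e - \vec d) - h(\vec x)\bigr).
\]
Then $g \supseteq \sigma$, $g(\vec x) = \vec x + \vec e - \vec d$ on each $B(\vec d, r/2)$, $g = h$ off $\bigcup B(\vec d, r)$, and $g$ inherits the linear-at-infinity behaviour of $h$. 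Since $\|g - h\|_\infty = O(r)$ on each ball, $d(g,h) \to 0$ and the measure conditions follow for small enough $r$.

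The core task is to verify $g \in \FF_\theta$, i.e.\ that $g$ is a $C^1$ diffeomorphism of $\RRR^n$ with $\tw(g) < \theta$. A direct computation gives $J_g = (1 - \alpha_{\vec d}) J_h + \alpha_{\vec d} I + ((\vec x + \vec e - \vec d) - h(\vec x))(\nabla \alpha_{\vec d})^T$ on each annulus $B(\vec d, r) \backslash B(\vec d, r/2)$. The rank-one correction has norm $O(\|I - J_h(\vec d)\|)$, which in general does \emph{not} vanish with $r$; this is the main obstacle. To overcome it, I would carry out a preliminary step of the same flavour, replacing $h$ by an $h' \in \FF_\theta$ with $d(h, h')$ small, $h' = h$ off thin shells, and $J_{h'}(\vec d) = I$ at every anchor. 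This is possible because $\tw(h) < \theta < \pi$ forces $\angle(\vec v, J_h(\vec d)\vec v) < \theta$ for every nonzero $\vec v$ (by Lemma~\ref{lemma-cl-tw} applied to limits of twist pairs tending to $(\vec d, \vec d)$), so $J_h(\vec d)$ lies in the arc-component of $I$ in $GL_n^+$ and admits a smooth isotopy $J_t$ from $J_h(\vec d)$ to $I$ along which $\angle(\vec v, J_t \vec v) < \theta$ for all $\vec v$ and all $t$; this isotopy is then realised on a thin radial shell around each anchor.

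Once $J_{h'}(\vec d) = I$ is in force, the rank-one correction in $J_g$ becomes $o(1)$ as $r \to 0$, so $J_g$ is uniformly close to $I$ on the annulus and in particular has positive determinant; bijectivity follows from Lemma~\ref{lemma-inj-surj} together with the asymptotic identity behaviour of $g$. For the twist bound, I would split into cases: pairs $(\vec x_0, \vec x_1)$ lying off $\bigcup B(\vec d, r)$ contribute angles bounded by $\tw(h') < \theta$; pairs with at least one point in some $B(\vec d, r/2)$ and the other far away yield angles differing from an angle realised by $h'$ with endpoint $\vec d$ by an error $O(r / \|\vec x_1 - \vec x_0\|)$ tending to $0$; and pairs confined to a single $B(\vec d, r)$ yield angles close to those of the identity action, hence $o(1)$. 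Choosing $r$ small enough that every case loses at most $(\theta - \tw(h))/2$ gives $\tw(g) < \theta$, completing the verification that $g \in \FF_\theta$ and hence, via Lemma~\ref{lemma-smallchange}, that a nice $q \le p$ exists.
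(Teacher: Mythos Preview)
Your overall shape matches the paper's proof (iterate Lemma~\ref{lemma-transl-near-0} over the anchors, then apply Lemma~\ref{lemma-smallchange}), but there is a real gap in your preliminary step. You assert that $J_h(\vec d)$ ``admits a smooth isotopy $J_t$ from $J_h(\vec d)$ to $I$ along which $\angle(\vec v, J_t\vec v) < \theta$ for all $\vec v$ and all $t$'', justified only by the observation that $\tw(J_h(\vec d)) < \theta$. That observation places $J_h(\vec d)$ in $\NN^n_\theta := \{A : \det A > 0,\ \tw(A) < \theta\}$, but it does \emph{not} show that $J_h(\vec d)$ lies in the path-component of $I$ inside $\NN^n_\theta$; being in the arc-component of $I$ in $GL_n^+$ is trivial (that space is connected) and beside the point. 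Whether $\NN^n_\theta$ itself is connected is explicitly left open in the paper. What rescues the argument is that $J_h(\vec d)$ is not an arbitrary element of $\NN^n_\theta$ but the Jacobian of some $h \in \FF_\theta$: since $h$ is a translation outside a bounded set, $t \mapsto J_h\bigl(t\vec d + (1-t)\vec a\bigr)$ for a far-away $\vec a$ is a path in $\NN^n_\theta$ from $I$ to $J_h(\vec d)$. This is exactly Lemma~\ref{lemma-path}, and your argument needs it or an equivalent.

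A secondary issue: ``realising the isotopy on a thin radial shell'' while keeping the result in $\FF_\theta$ is itself delicate. A thin shell forces a large radial derivative of $r \mapsto A(r)$, and the map $\vec x \mapsto A(\|\vec x\|)\,\vec x$ can then fail to be injective or can acquire twist well above $\theta$. The paper controls this via the slow-variation hypothesis $(\ast)$ of Lemma~\ref{lemma-morph}, which is supplied by the logarithmic bump of Lemma~\ref{lemma-smooth-log}; the ratio of outer to inner shell radius must be bounded below in terms of $\theta - \tw(h)$ and cannot be taken close to $1$. Your sketch elides this, as well as the companion step of first making $h$ genuinely linear (not merely $C^1$ with prescribed Jacobian at a point) near each anchor, which the paper handles separately in Lemma~\ref{lemma-near-point}.
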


This will be used in the proof of ccc (Lemma \ref{lemma-ccc}).
That proof will use the same basic idea as the ccc proof from
Lemma \ref{lemma-main-1}, which relied on establishing
``$\sigma_\alpha \compat \sigma_\beta \to p_\alpha \compat p_\beta $''.
In the proof of Lemma \ref{lemma-ccc}, we can now say
WLOG that all the $p_\alpha$ are nice.  The fact that $h_\alpha$
and $h_\beta$ are just translations near the various
$(\vec d, \vec e) \in \sigma_\alpha \cup \sigma_\beta$
will aid in the proof of $p_\alpha \compat p_\beta$.

We shall prove Lemma \ref{lemma-nice-are-dense}
later in this section, after some preliminaries.

Because we are using the operator norm on the Jacobian,
there is a Lipschitz condition in terms of $\|J_f\|$ when
$\|J_f\| < \infty$:

\begin{lemma}
\label{lemma-lip}
If $f \in C^1(\RRR^n, \RRR^n)$ then
$\|f(c) - f(a)\| \le \|J_f\|\,  \|c - a\|$ for all $c,a \in \RRR^n$.
\end{lemma}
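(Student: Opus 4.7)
The plan is to reduce the multivariable bound to the one-dimensional fundamental theorem of calculus along the straight segment joining $a$ to $c$, combined with the submultiplicativity of the operator norm.

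First I would dispose of trivial cases: if $a=c$ the inequality is $0 \le 0$, and if $\|J_f\|=\infty$ there is nothing to show. So assume $a\ne c$ and $\|J_f\|<\infty$, and parametrize the segment by $\gamma:[0,1]\to\RRR^n$, $\gamma(t) = a + t(c-a)$. Since $f\in C^1(\RRR^n,\RRR^n)$, the composition $f\circ\gamma:[0,1]\to\RRR^n$ is $C^1$, and the chain rule gives $(f\circ\gamma)'(t) = J_f(\gamma(t))\,(c-a)$.

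Next I would apply the fundamental theorem of calculus componentwise to get the vector-valued identity
\[
f(c) - f(a) \;=\; \int_0^1 J_f(\gamma(t))\,(c-a)\,dt.
\]
Taking norms and using the standard triangle inequality for vector-valued Riemann integrals of continuous integrands on $[0,1]$ yields
\[
\|f(c)-f(a)\| \;\le\; \int_0^1 \bigl\|J_f(\gamma(t))\,(c-a)\bigr\|\,dt.
\]
Now I would apply the definition of the operator norm (Definition \ref{def-norms}): for each $t$, $\|J_f(\gamma(t))\,(c-a)\| \le \|J_f(\gamma(t))\|\,\|c-a\| \le \|J_f\|\,\|c-a\|$. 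Substituting and integrating the constant bound over $[0,1]$ gives $\|f(c)-f(a)\|\le \|J_f\|\,\|c-a\|$, as required.

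There is no serious obstacle here; the only minor point of care is the triangle inequality for vector integrals, but since $t\mapsto J_f(\gamma(t))(c-a)$ is continuous on the compact interval $[0,1]$, this is a standard Riemann-integral fact (or can be obtained by pairing with unit vectors and applying the scalar triangle inequality). The lemma is essentially the multivariable mean value inequality packaged in terms of the operator norm used throughout the paper.
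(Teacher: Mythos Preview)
Your proof is correct and follows essentially the same approach as the paper: both parametrize the segment $t\mapsto a+t(c-a)$, apply the chain rule to get $J_f(\gamma(t))(c-a)$, and bound by $\|J_f\|\,\|c-a\|$ inside the integral. The only cosmetic difference is that the paper phrases the bound $\|f(c)-f(a)\|\le\int_0^1\|(f\circ\gamma)'(t)\|\,dt$ geometrically (distance between endpoints is at most the arc length of the image curve), whereas you obtain it via the fundamental theorem of calculus and the triangle inequality for vector integrals; these are equivalent standard justifications.
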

\begin{proof}
Let $b = c-a$.  
$\|f(c) - f(a)\| $ is no more than the length of the path
from $f(a)$ to $f(c)$ defined by $t \mapsto f(a + tb)$ for $t \in [0,1]$.
This length equals $\int_0^1 \|\frac{d}{dt} f(a + tb)\| \, dt =
\int_0^1 \|J_f(a + tb)\, b \| \, dt \le
\int_0^1 \| J_f \| \|b \| \, dt  = \| J_f \| \| b\|$.
\end{proof}

Using $J_f$, we can compute a ``local twist'':

\begin{definition}
If $Y$ is a non-singular matrix, let
\[
\twist(Y) = \{ \angle( \vec v , Y \vec v ) : \vec v \in S^{n-1} \}  =
\{ \angle( \vec v , Y \vec v ) : \vec v \in \RRR^n \backslash \{\vec 0\} \} 
\ \ .
\]
Then, let $\tw(Y) = \sup(\twist(Y)) \in [0,\pi]$.
\end{definition}

Observe that for
$f \in \FF_\theta$,
$\tw(J_f(x)) < \theta $ for all $x$.
Also, note that $\twist(Y) = \twist(Y\iv)$.
Also, if $f$ is the function $\vec v \mapsto Y \vec v$,
then $\twist(Y) = \twist(f)$ and $\tw(Y) = \tw(f)$ .

Next, a remark on elementary geometry.
Let $v$ be the center of the Earth and $x$ a point on its surface,
and
let $w$ be the center of the Moon and $y$ a point on its surface.
Then the lines $\overrightarrow{vw}$ and 
$\overrightarrow{xy}$ point in ``almost'' the same direction,
and the following lemma gives a crude upper bound to the angle between them:

\begin{lemma}
\label{lemma-ball}
In $\RRR^n$:  say 
$\| w - v \| = T$ \textup(the \emph{distance}\textup),
and $\|x - v\| =  r$ and $\|y - w\| =  s$
\textup(the \emph{two radii}\textup), and assume
that $T \ge r + s$.  Let $\beta = \angle( w - v, y - x)$.
Then $\beta \le \pi(r+s) / (2T)$.
\end{lemma}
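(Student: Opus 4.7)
The plan is to write $y - x$ as $\vec a + \vec e$ where $\vec a = w - v$ is the ``main'' direction and $\vec e = (y - w) - (x - v)$ is a small perturbation, then use the law of sines and Jordan's inequality. Note $\|\vec a\| = T$ and $\|\vec e\| \le \|y - w\| + \|x - v\| = s + r \le T$ by the triangle inequality and the hypothesis. (If $y = x$, then $\beta$ is undefined; we assume $y \ne x$ so $\vec a + \vec e \ne \vec 0$.)

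First I would show $\beta \in [0, \pi/2]$ so that Jordan's inequality is applicable. Compute
\[
\vec a \cdot (\vec a + \vec e) \;=\; T^2 + \vec a \cdot \vec e \;\ge\; T^2 - T \|\vec e\| \;=\; T(T - \|\vec e\|) \;\ge\; 0,
\]
using Cauchy--Schwarz and $\|\vec e\| \le r + s \le T$. Hence $\cos \beta \ge 0$.

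Next I would bound $\sin\beta$ using the triangle with vertices $O = \vec 0$, $A = \vec a$, $B = \vec a + \vec e$, which has side lengths $|OA| = T$, $|AB| = \|\vec e\|$, $|OB| = \|\vec a + \vec e\|$, with $\beta$ the angle at $O$. By the law of sines,
\[
\frac{\sin \beta}{|AB|} \;=\; \frac{\sin (\angle OBA)}{|OA|},
\]
so $T \sin \beta = \|\vec e\| \sin(\angle OBA) \le \|\vec e\| \le r + s$. (Geometrically: the perpendicular distance from $A$ to the ray $OB$ is at most $|AB|$, and equals $T \sin \beta$.)

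Finally I would apply Jordan's inequality $\sin \beta \ge (2/\pi)\beta$ for $\beta \in [0, \pi/2]$, yielding
\[
\beta \;\le\; \frac{\pi}{2} \sin \beta \;\le\; \frac{\pi (r+s)}{2 T}.
\]
There is no real obstacle here; the argument is elementary planar trigonometry that transfers to $\RRR^n$ since the whole configuration lies in the affine plane through $v$, $w$, $x$, $y$.
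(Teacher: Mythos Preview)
Your proof is correct and follows essentially the same route as the paper: translate so that the relevant triangle has vertices $\vec 0$, $w-v$, and $y-x$, use the law of sines to get $\sin\beta \le (r+s)/T$, and then apply Jordan's inequality $\sin\beta \ge (2/\pi)\beta$ on $[0,\pi/2]$. The only cosmetic difference is in how $\beta \le \pi/2$ is obtained: the paper argues that the side opposite $\beta$ is no longer than the side opposite the adjacent angle, hence $\beta$ is not the largest angle of the triangle; you instead compute $\vec a \cdot (\vec a + \vec e) \ge 0$ directly via Cauchy--Schwarz, which is arguably cleaner.
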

\begin{proof}
$\beta = \angle( w - v, (y + v - x) - v)$.  Consider 
$\triangle ABC$, where $A,B,C$ are the points 
$y + v - x$,  $v$, $w$, respectively.
Let $a,b$ be the lengths of the sides opposite $A,B$ respectively,
and let $\alpha$ be the angle at $A$; 
$\beta$ is the angle at $B$. 
Note that $b = \| y + v - x - w\| \le r + s \le T = a$.

By the ``law of sines'',
$b / \sin(\beta) = a / \sin(\alpha)$, so
$\sin(\beta) = (b/a) \sin(\alpha) \le b/a$.
Also, $\beta < \pi/2$ because $b \le a$, and 
$ 0 \le x \le \pi/2 \to \sin(x) \ge (2/\pi)\, x$,
so $\beta \le  (\pi/2)\,(b/a) \le  (\pi/2)\,( (r + s) /a )$.
\end{proof}

In many (but not all) of our applications,
one of $r,s$ will be $0$.
We remark that a precise upper bound is
$\beta \le \arcsin((r+s)  /T )$, but the one in the lemma
is simpler and will suffice in all our arguments.

We shall eventually prove the following, which is the 
``pure $\FF_\theta$'' analog of Lemma \ref{lemma-nice-are-dense}.

\begin{lemma}
\label{lemma-transl-near-0}
Assume that $f\in \FF_\theta$ and $f(\vec d) = \vec e$ and
$\varepsilon > 0$.  Then there
exists a $g \in \FF_\theta$ such that 
$d(f,g)  < \varepsilon$,
and
$g(\vec d) = \vec e$,
and
$g(\vec x) = f(\vec x)$ whenever $\|\vec x - \vec d\| \ge \varepsilon$
or $\|f(\vec x) - \vec e \| \ge \varepsilon$,
and 
$g(\vec x) = \vec x - \vec d + \vec e$ holds in some neighborhood of $\vec x$.
\end{lemma}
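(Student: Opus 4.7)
The plan is to translate coordinates so that $\vec d = \vec e = \vec 0$; thus $f(\vec 0) = \vec 0$ and the goal becomes finding $g \in \FF_\theta$ that equals the identity in a neighborhood of $\vec 0$, agrees with $f$ outside $B(\vec 0, \varepsilon)$, and satisfies $d(f,g) < \varepsilon$. Let $M = J_f(\vec 0)$. The difference quotients $\lambda\iv(f(\lambda \vec v) - f(\vec 0))$ converge to $M \vec v$ as $\lambda \to 0$, so the corresponding angles $\angle(\vec v, M\vec v)$ lie in $\cl(\twist(f))$, and by Lemma \ref{lemma-cl-tw}, $\tw(M) \le \tw(f) < \theta$. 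I would then carry out a two-stage construction using smooth radial cutoffs.

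In Stage~1, I would replace $f$ near $\vec 0$ by its linearization. Writing $\psi(\vec x) = f(\vec x) - M\vec x$, the $C^1$-smoothness of $f$ gives $\psi(\vec 0) = \vec 0$ and $J_\psi(\vec 0) = 0$, so for any prescribed $\eta > 0$ one can fix $\delta_1 > 0$ such that $\|\psi(\vec x)\| \le \eta \|\vec x\|$ and $\|J_\psi(\vec x)\| \le \eta$ on $B(\vec 0, 2\delta_1)$. Define $f_1(\vec x) = M\vec x + \phi_1(\|\vec x\|)\psi(\vec x)$, where $\phi_1$ is a smooth cutoff vanishing on $[0,\delta_1]$ and equal to $1$ on $[2\delta_1,\infty)$. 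A direct computation shows that the extra rank-one term in $J_{f_1}$ contributed by $\phi_1'$ is of size $\phi_1'(\|\vec x\|)\|\psi(\vec x)\| = O(\eta)$, so $J_{f_1}$ stays $O(\eta)$-close to $M$ on all of $\RRR^n$; for $\eta$ small enough, $J_{f_1}$ is nonsingular with twist $< \theta$, and by Lemma \ref{lemma-inj-surj} (using that $f_1$ agrees with $f$ outside $B(\vec 0, 2\delta_1)$), $f_1 \in \FF_\theta$ with $d(f,f_1) < \varepsilon/2$ and $f_1(\vec x) = M\vec x$ on $B(\vec 0, \delta_1)$. In Stage~2, I would deform the linear map to the identity: since $\tw(M) < \theta$ and $\det M > 0$, choose a $C^1$ path $t \mapsto M_t$ in $GL^+(n)$ from $M_0 = I$ to $M_1 = M$ with $\tw(M_t) < \theta$ along the path (via polar decomposition $M = QP$ with $Q \in \SO(n)$ and $P$ symmetric positive definite, both realized as matrix exponentials, then interpolate via $M_t = \exp(tA)\exp(tS)$). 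For $\delta_2 \ll \delta_1$ and a smooth cutoff $\phi_2$ vanishing on $[0, \delta_2/2]$ and equal to $1$ on $[\delta_2, \infty)$, set $g(\vec x) = M_{\phi_2(\|\vec x\|)} \vec x$ for $\|\vec x\| \le \delta_2$ and $g(\vec x) = f_1(\vec x)$ otherwise. Then $g$ is the identity on $B(\vec 0, \delta_2/2)$, the two pieces agree at $\|\vec x\| = \delta_2$ (both equal $M\vec x$), and $g = f$ outside $B(\vec 0, 2\delta_1) \subseteq B(\vec 0, \varepsilon)$.

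Verifying that $g \in \FF_\theta$ is the main obstacle. Global two-point twists split into cases: pairs inside the identity region contribute angle $0$; pairs outside $B(\vec 0, 2\delta_1)$ inherit $\tw(f) < \theta$; pairs straddling the modification region are handled by Lemma \ref{lemma-ball}, since $\|g(\vec x) - f(\vec x)\| = O(\delta_1)$ uniformly so the angle perturbation relative to $f$'s value is small whenever $\|f(\vec x_1) - f(\vec x_2)\|$ exceeds the modification scale. The delicate point is within the inner transition annulus $[\delta_2/2,\delta_2]$, where
\[
J_g(\vec x) = M_{\phi_2(r)} + r\phi_2'(r)\, \dot M_{\phi_2(r)}\,(\vec x / r)(\vec x / r)^T \quad (r = \|\vec x\|),
\]
contains a rank-one correction whose magnitude is \emph{not} small a priori (one has $r\phi_2'(r) = O(1)$). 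One must therefore choose the path $M_t$ and the cutoff $\phi_2$ carefully so that this correction preserves both nonsingularity of $J_g$ and the twist bound $\tw(J_g) < \theta$; note that the perturbation vanishes in directions tangent to the sphere (where $J_g$ reduces to $M_{\phi_2(r)}$), so the issue is only along the radial direction, which can be controlled by selecting a path with $\|\dot M_t\|$ and radial speed matched to the cutoff. Global injectivity of $g$ then follows from the spherical structure of the radial deformation together with Lemma \ref{lemma-inj-surj}.
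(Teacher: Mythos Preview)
Your two-stage strategy matches the paper's: first linearize $f$ near $\vec 0$ (the paper's Lemma~\ref{lemma-near-point}), then morph the linear map $A = J_f(\vec 0)$ to the identity (Lemma~\ref{lemma-transl-special}). But there are two substantive gaps.

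\textbf{The path from $I$ to $M$.} You assert that the polar-decomposition interpolation $M_t = \exp(tA)\exp(tS)$ stays in $\{Y : \det Y > 0,\ \tw(Y) < \theta\}$, but this is exactly the connectedness question the paper flags as open for $n \ge 3$ (see the Question preceding Lemma~\ref{lemma-path}). Knowing $\tw(M) < \theta$ does not obviously bound $\tw(M_t)$ along your path; nothing in your argument rules out the orthogonal part $Q = \exp(A)$ itself having $\tw(Q) \ge \theta$, let alone the intermediate products. The paper sidesteps this entirely via Lemma~\ref{lemma-path}: since $M = J_f(\vec 0)$ for the \emph{given} $f \in \FF_\theta$, and $J_f(\vec x) = I$ for large $\|\vec x\|$, the path $t \mapsto J_f(t\vec a + (1-t)\vec d)$ (for a far-away $\vec d$) runs from $I$ to $M$ while staying inside $\NN^n_\theta$ automatically. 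You need this idea, or else a proof of your polar-decomposition claim.

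\textbf{The cutoff and the global twist.} In the inner annulus you correctly observe that $r\phi_2'(r) = O(1)$ is not small, but you then only check that $\tw(J_g(\vec x)) < \theta$ pointwise. Pointwise Jacobian twist does not give the global two-point bound $\tw(g) < \theta$; your case split omits pairs with both points in the morphing annulus, or one there and one just outside. The paper's fix is twofold: use the logarithmic-type cutoff of Lemma~\ref{lemma-smooth-log}, which satisfies $\varphi((1+\sigma)r) - \varphi(r) \le \zeta\sigma$ for all $\sigma, r > 0$ (this multiplicative Lipschitz condition is the right ``slow'' scale, not a standard bump with $|\phi_2'| = O(1/\delta_2)$), and then carry out the explicit two-point estimate of Lemma~\ref{lemma-morph} via Lemma~\ref{lemma-ball}. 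The hypothesis $(\ast)$ there, $\|A((1+\sigma)r) - A(r)\| < (\varepsilon\sigma)/(\pi M)$, is exactly what makes both injectivity and the bound $\tw(h) < \theta + \varepsilon/2$ work globally; your ``radial speed matched to the cutoff'' is pointing at this, but the actual mechanism is missing. (Incidentally, your claim in Stage~1 that $J_{f_1}$ is $O(\eta)$-close to $M$ on all of $\RRR^n$ is wrong as stated---outside $B(\vec 0, 2\delta_1)$ you have $J_{f_1} = J_f$---though the intended statement that $J_{f_1}$ is $O(\eta)$-close to $J_f$ is fine.)
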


So, $g$ is close to $f$, but equals a simple translation near $\vec d$. 
A rough idea of the proof: 
By translating the domain and range,
we may assume that $\vec d = \vec e = \vec 0$;
then we need to get $g(\vec x) = \vec x$ for $\vec x$ near $\vec 0$.
We first modify $f$ slightly to get a function $h$
such that $h(\vec x) = A \vec x$ near $\vec 0$,
where $A = J_f(\vec 0)$. 
We then get $g$ by ``morphing'' $A$ to $I$ near $\vec 0$.
This ``morphing'' requires some further discussion of matrices:

\begin{definition}
For $n \ge 1$, $\MM^n$ denotes the space of all $n \times n$ real matrices;
this has the topology
of $\RRR^{n^2}$.  Then, for $\theta > 0$, define
$\NN^n_\theta = \{A \in \MM^n : \det A  > 0 \ \&\ \tw(A) < \theta\}$. 
\end{definition}

Some easy closure properties:

\begin{lemma}
\label{lemma-closure-NN}
$A \in   \NN^n_\theta \leftrightarrow A\iv \in \NN^n_\theta \leftrightarrow 
cA \in \NN^n_\theta \leftrightarrow  O\iv A O \in\NN^n_\theta$ whenever
$c > 0$ and $O$ is an orthogonal matrix.
\end{lemma}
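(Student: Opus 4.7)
My plan is to prove each of the three equivalences by separately verifying that (i) the sign of the determinant is preserved and (ii) the twist is preserved (in fact, that $\twist(A)$ itself is unchanged) under each of the operations $A \mapsto A\iv$, $A \mapsto cA$ (for $c>0$), and $A \mapsto O\iv A O$ (for $O$ orthogonal). The determinant half is immediate from standard multiplicativity: $\det(A\iv) = 1/\det(A)$, $\det(cA) = c^n \det(A)$ (and $c^n > 0$ since $c > 0$), and $\det(O\iv A O) = \det(A)$. So in each case $\det A > 0$ iff the transformed matrix has positive determinant. There is nothing to do here beyond noting these identities.

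For the twist, the plan is to prove the stronger equalities $\twist(A) = \twist(A\iv) = \twist(cA) = \twist(O\iv A O)$, after which taking suprema yields $\tw(A) < \theta$ iff the transformed matrix has twist less than $\theta$. The equality $\twist(A) = \twist(A\iv)$ is already noted in the excerpt just after the definition of $\twist(Y)$; if needed, I would reprove it by substituting $\vec w = A\vec v$, which ranges over $\RRR^n \setminus \{\vec 0\}$ as $\vec v$ does, and using the symmetry of $\angle$: $\angle(\vec v, A\vec v) = \angle(A\iv \vec w, \vec w) = \angle(\vec w, A\iv \vec w)$.

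For $cA$ with $c > 0$, the vector $cA\vec v = c(A\vec v)$ is a positive scalar multiple of $A\vec v$, hence points in the same direction, so $\angle(\vec v, cA\vec v) = \angle(\vec v, A\vec v)$ for each $\vec v \ne \vec 0$, giving $\twist(cA) = \twist(A)$. For $O\iv A O$ with $O$ orthogonal, the substitution $\vec w = O\vec v$ is a bijection of $S^{n-1}$ onto itself (since orthogonal maps are isometries), and because orthogonal maps preserve angles we have
\[
\angle(\vec v, O\iv A O\vec v) = \angle(O \vec v, O(O\iv A O\vec v)) = \angle(O\vec v, A O\vec v) = \angle(\vec w, A\vec w).
\]
Thus $\twist(O\iv A O) = \twist(A)$.

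No step is hard; the lemma is essentially a bookkeeping statement about the two defining conditions of $\NN^n_\theta$. The only mild care needed is to use that orthogonal matrices preserve both norms and inner products (hence angles), and that the scalar $c$ is positive so that $c^n > 0$ and $cA\vec v$ has the same direction as $A\vec v$; were $c$ allowed to be negative, both the determinant condition and the twist condition could fail.
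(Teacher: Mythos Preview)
Your proof is correct. The paper does not actually supply a proof of this lemma; it merely introduces it with the phrase ``Some easy closure properties'' and states the result without argument. Your write-up fills in precisely the routine verifications the author is implicitly leaving to the reader: multiplicativity of the determinant for the first defining condition of $\NN^n_\theta$, and the invariance of $\twist(\cdot)$ under each of the three operations for the second. The identity $\twist(Y) = \twist(Y\iv)$ that you reprove is in fact already remarked in the paper immediately after the definition of $\twist(Y)$, so you could simply cite that observation there if you wished.
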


$\NN^n_\theta$ is clearly open in $\MM^n$, and $I \in \NN^n_\theta$.  But:

\begin{question}
Is $\NN^n_\theta$ connected when $0 < \theta < \pi$?
\end{question}

The answer is trivially ``yes'' for $n = 1$.  It is also ``yes'' for $n = 2$,
as can be
proved by direct computation, using
Lemma \ref{lemma-closure-NN} to
simplify the form of the matrix. 
The following observation makes this question irrelevant 
for our work here:

\begin{lemma}
\label{lemma-path}
If $f \in \FF_\theta$ and $\vec a \in \RRR^n$ and $A = J_f(\vec a)$,
then $A \in \NN^n_\theta$ and there is a $C^\infty$ path
$\Gamma : [0,1] \to \NN^n_\theta$ such that $\Gamma(0) = I$
and $\Gamma(1) = A$.
\end{lemma}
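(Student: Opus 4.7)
The plan has three steps: verify $A \in \NN^n_\theta$ as an immediate consequence of facts already recorded in this section; produce a continuous path from $I$ to $A$ in $\NN^n_\theta$ by pulling back along $J_f$; and finally upgrade that path to a $C^\infty$ path by standard smoothing.

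For the first step, I would invoke the two observations already in place: the lemma immediately preceding states that $\det J_f(\vec x) > 0$ for all $\vec x$, and the remark just after the definition of $\tw(Y)$ for matrices says $\tw(J_f(\vec x)) < \theta$ for every $\vec x$. Evaluating at $\vec x = \vec a$ gives $A \in \NN^n_\theta$.

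For the continuous path, I would exploit clause (3) of Definition \ref{def-FF}. Fix $\vec b$ with $\|\vec b\| \ge r$, so that $f$ coincides with the translation $\vec x \mapsto \vec x + \vec c$ on a neighborhood of $\vec b$, whence $J_f(\vec b) = I$. Define
\[
\tilde\Gamma(t) \ = \ J_f\bigl((1-t)\vec b + t\vec a\bigr) \ \ .
\]
Since $f$ is $C^1$, $\tilde\Gamma : [0,1] \to \MM^n$ is continuous; by construction $\tilde\Gamma(0) = I$ and $\tilde\Gamma(1) = A$; and each $\tilde\Gamma(t)$ lies in $\NN^n_\theta$ by the same two observations used in step one.

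For the smoothing step, note that $\NN^n_\theta$ is open in $\MM^n$: $\det > 0$ is an open condition, and $\tw(Y) < \theta$ is open because, on the set of non-singular matrices, $\tw(Y)$ is continuous in $Y$ (the supremum in its definition is a maximum over the compact sphere $S^{n-1}$). The compact image $\tilde\Gamma([0,1])$ therefore has positive distance to $\MM^n \backslash \NN^n_\theta$; fix $\varepsilon > 0$ with the closed $3\varepsilon$-neighborhood of $\tilde\Gamma([0,1])$ contained in $\NN^n_\theta$. By Weierstrass applied entrywise, choose a matrix-valued polynomial $P(t)$ with $\sup_t \|P(t) - \tilde\Gamma(t)\| < \varepsilon$, and set
\[
\Gamma(t) \ = \ P(t) - (1-t)\bigl(P(0) - I\bigr) - t\bigl(P(1) - A\bigr) \ \ .
\]
Then $\Gamma$ is $C^\infty$, $\Gamma(0) = I$, $\Gamma(1) = A$, and $\|\Gamma(t) - \tilde\Gamma(t)\| \le 3\varepsilon$ for all $t$, so $\Gamma([0,1]) \subseteq \NN^n_\theta$. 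There is no substantive obstacle; the only point requiring any care is that $f$ is merely $C^1$, so that $\tilde\Gamma$ is merely continuous and the smoothing step is genuinely necessary, but openness of $\NN^n_\theta$ makes that step routine.
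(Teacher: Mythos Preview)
Your proof is correct and follows essentially the same approach as the paper: both pull back along $t \mapsto J_f$ of a straight-line segment from a far-out point (where $J_f = I$) to $\vec a$, and then invoke openness of $\NN^n_\theta$ to upgrade continuity to $C^\infty$. The paper simply asserts the smoothing step as a general fact about open subsets of $\MM^n$, whereas you carry it out explicitly via Weierstrass with endpoint correction; one cosmetic slip is that you should take $\|\vec b\| > r$ (not $\ge r$) to guarantee $f$ is a translation on a full neighborhood of $\vec b$, though by continuity of $J_f$ the conclusion $J_f(\vec b) = I$ holds either way.
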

\begin{proof}
To get a continuous $\Gamma$, fix
$r,\vec c$ as in (3) of Definition \ref{def-FF},
and then fix $\vec d$ with $\|\vec d\| > r$.
Then let $\Gamma(t) = J_f(t \vec a + (1 - t) \vec d)$.
Then, observe that (just because
$\NN^n_\theta$ is open in $\MM^n$), whenever $A,B$ lie in the same connected 
component of $\NN^n_\theta$, they are connected by 
a  $C^\infty$ path lying in $\NN^n_\theta$.
\end{proof}

The following lemma expresses the basic matrix morphing:

\begin{lemma}
\label{lemma-morph}
Assume that $h(\vec v) = A(\|\vec v\|) \vec v$, where 
$A : [0,\infty) \to \MM^n$ and for each $r \in [0,\infty)$,
$A(r)$ is non-singular and $\tw(A(r)) < \theta$.
Assume that $M := \sup\{ \| A(r)\iv \| : r \in [0,\infty) \} < \infty$.
Fix $\varepsilon \in (0 , \pi / 2)$ and assume that:
\[
\| A( (1 + \sigma) r )  - A(r) \| < (\varepsilon \sigma) / (\pi M)
\qquad (\forall \sigma, r > 0) \ \ .  
\tag{$\ast$}
\]
Then $h$ is 1-1 and $\tw(h) < \theta + \varepsilon/2$.
Furthermore,
\[
\|h(\vec v_1) - h(\vec v_0)\| \ge \|\vec v_1 - \vec v_0\| / (2 M) \tag{\dag}
\]
for all $\vec v_0, \vec v_1$.
\end{lemma}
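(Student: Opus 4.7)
The plan is to derive both (\dag) and the twist bound from the decomposition
\[
h(\vec v_1) - h(\vec v_0) \;=\; A(r_1)(\vec v_1 - \vec v_0) \;+\; [A(r_1) - A(r_0)]\,\vec v_0,
\]
where $r_i = \|\vec v_i\|$ and WLOG $0 < r_0 \le r_1$ (the degenerate case $r_0 = 0$ collapses to $\|A(r_1)\vec v_1\| \ge \|\vec v_1\|/M$ and $\angle(\vec v_1, A(r_1)\vec v_1) < \theta$, handling both conclusions at once). The essential design choice is to pair $A(r_1) - A(r_0)$ with the \emph{smaller} vector $\vec v_0$; the resulting factor $r_0$ exactly cancels the $r$ in the denominator of $(\ast)$ when applied with $\sigma = (r_1-r_0)/r_0$. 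The alternative pairing with $\vec v_1$ would instead leave an unbounded factor of $r_1/r_0$.

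From $(\ast)$ I get the clean error estimate
\[
\|[A(r_1) - A(r_0)]\vec v_0\| \;<\; \frac{\varepsilon (r_1-r_0)}{\pi M} \;\le\; \frac{\varepsilon \|\vec v_1 - \vec v_0\|}{\pi M},
\]
using the reverse triangle inequality $|r_1 - r_0| \le \|\vec v_1 - \vec v_0\|$ in the last step. The main term satisfies $\|A(r_1)(\vec v_1-\vec v_0)\| \ge \|\vec v_1 - \vec v_0\|/M$, directly from $\|A(r_1)\iv\| \le M$. Subtraction gives $\|h(\vec v_1) - h(\vec v_0)\| \ge \|\vec v_1 - \vec v_0\|(1 - \varepsilon/\pi)/M > \|\vec v_1 - \vec v_0\|/(2M)$, where the strict inequality uses $\varepsilon < \pi/2$. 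This is (\dag), and injectivity of $h$ falls out immediately.

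For the twist, the main-term angle $\angle(\vec v_1 - \vec v_0, A(r_1)(\vec v_1 - \vec v_0))$ is at most $\tw(A(r_1)) < \theta$. The ratio of perturbation to main-term norm is at most $\varepsilon/\pi < 1$, so the elementary bound $\sin(\angle(\vec a, \vec a + \vec b)) \le \|\vec b\|/\|\vec a\|$ (valid when $\|\vec b\| \le \|\vec a\|$) yields a sine bound of $\varepsilon/\pi$. The same inequality $\sin(x) \ge (2/\pi)x$ on $[0, \pi/2]$ used in Lemma \ref{lemma-ball} then converts this to an angular perturbation of at most $\varepsilon/2$, and one last triangle inequality for angles delivers $\tw(h) < \theta + \varepsilon/2$.

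I do not expect a serious obstacle here: the argument is a careful perturbation estimate, and the hypothesis $(\ast)$ has been pre-packaged precisely so that the error term admits a bound proportional to $\|\vec v_1 - \vec v_0\|$ rather than to $\|\vec v_1\|$. The only subtlety worth flagging is the choice of decomposition above; the symmetric-looking alternative produces an error that blows up as $\vec v_0$ is taken close to $\vec 0$ along a ray to a far-away $\vec v_1$.
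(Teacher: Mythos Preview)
Your proof is correct and follows essentially the same route as the paper: the identical decomposition $h(\vec v_1)-h(\vec v_0)=A(r_1)(\vec v_1-\vec v_0)+[A(r_1)-A(r_0)]\vec v_0$, the same error bound $\varepsilon\|\vec v_1-\vec v_0\|/(\pi M)$ via $(\ast)$ with $\sigma=(r_1-r_0)/r_0$, and the same angular perturbation estimate (the paper packages your $\sin$-inversion step as a direct citation of Lemma~\ref{lemma-ball}, but the content is identical).
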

\begin{proof}
First, we establish (\dag), which implies that $h$ is 1-1.
Let $A_i = A(\vec v_i)$ for $i = 0,1$.  Observe:
\[
\begin{array}{ll}
\|A_1 (\vec v_1 - \vec v_0)\| \ge
\|\vec v_1 - \vec v_0\| / \| A_1\iv\| \ge
\|\vec v_1 - \vec v_0\| / M  \qquad\qquad\qquad\qquad     &(1)  
\end{array}
\]
Since $\dag$ is clear from (1) when $\|\vec v_1\| = \| \vec v_0\|$
or $\vec v_0 = \vec 0$,
we may assume that $\| \vec v_0 \| = r$ and 
$\| \vec v_1 \| = (1 + \sigma)r$, where $\sigma, r > 0$.
Then (\dag) follows using (2)(1)(3):
\[
\begin{array}{ll}
\|h(\vec v_1) - h(\vec v_0)\| = 
\|A_1 \vec v_1 - A_0 \vec v_0\| = 
\|A_1 (\vec v_1 - \vec v_0) + (A_1 - A_0) \vec v_0\| &(2) \\
\|(A_1 - A_0) \vec v_0\| \le
(\varepsilon \sigma r) / (\pi M) \le
\|\vec v_1 - \vec v_0\| \varepsilon  / (\pi M) \le
\|\vec v_1 - \vec v_0\|  / (2M) &(3)  
\end{array}
\]

For $\tw(h) \ < \theta + \varepsilon/2$, we
must show that $\angle( \vec v_1 - \vec v_0, h(\vec v_1) - h(\vec v_0) )
< \theta + \varepsilon/2 $ whenever $\vec v_1 \ne \vec v_0$.
This is clear if $\| \vec v_1\|  = \| \vec v_0\| $ or
if one of $\vec v_1 , \vec v_0$ is $\vec 0$, so we may assume that
$\vec v_0, \vec v_1, A_0, A_1, r, \sigma$ are as above,
and we must show that
\[
\angle( \vec v_1 - \vec v_0, A_1 \vec v_1  - A_0 \vec v_0) 
< \theta + \varepsilon/2
\]
Now, using $\tw(A(r)) < \theta$, we know that
$ \angle( \vec v_1 - \vec v_0, A_1 \vec v_1  - A_1 \vec v_0) < \theta $,
so we now use Lemma \ref{lemma-ball} to show that
\[
\beta := 
\angle(  A_1 \vec v_1  - A_1 \vec v_0  ,  A_1 \vec v_1  - A_0 \vec v_0)
\le \varepsilon/2 \ \ .
\]
The ``distance'' is
$T = \| A_1 \vec v_1  - A_1 \vec v_0 \| \ge
\| \vec v_1  -  \vec v_0\| / M \ge \sigma r / M$, using (1),
and the two ``radii'' are $0$ and
$\|  A_1 \vec v_0 - A_0 \vec v_0 \| \le r \cdot (\varepsilon \sigma) / (\pi M) $
by $(\ast)$, so that $\beta \le 
\pi \cdot   r \cdot (\varepsilon \sigma) / (\pi M) \div
2 \sigma r / M  = \varepsilon / 2 $.
\end{proof}

We shall obtain the $A(r)$ using a path in 
$\NN^n_\theta$, with the aid of the following:

\begin{lemma}
\label{lemma-smooth-log}
Given $P,Q, \zeta > 0$, with $P < Q e^{-1/\zeta}$, there is 
a non-decreasing $C^\infty$ function $\varphi: \RRR \to [0,1]$ such that
$\varphi(x) = 0$ whenever $x \le P$, and
$\varphi(x) = 1$ whenever $x \ge Q$, and
$\varphi( (1 + \sigma) x) - \varphi(x) \le \zeta \sigma$
whenever $\sigma, x > 0$.
\end{lemma}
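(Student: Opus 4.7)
The key idea is the logarithmic change of variable $u = \log x$, which converts the multiplicative condition $(3)$ into an ordinary Lipschitz condition. The plan is to set $\varphi(x) = \psi(\log x)$ for $x > 0$ and $\varphi(x) = 0$ for $x \le 0$, where $\psi : \RRR \to [0,1]$ is a $C^\infty$, non-decreasing, $\zeta$-Lipschitz function satisfying $\psi(u) = 0$ for $u \le \log P$ and $\psi(u) = 1$ for $u \ge \log Q$.

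Granted such a $\psi$, the verification of $(3)$ is immediate: for $x, \sigma > 0$,
\[
\varphi((1 + \sigma) x) - \varphi(x) = \psi(\log x + \log(1 + \sigma)) - \psi(\log x) \le \zeta \log(1 + \sigma) \le \zeta \sigma,
\]
using $\log(1 + \sigma) \le \sigma$. The support conditions transfer directly: for $x \in (0, P]$, $\log x \le \log P$, so $\varphi(x) = 0$, while for $x \ge Q$, $\log x \ge \log Q$ gives $\varphi(x) = 1$. Smoothness at $x = 0$ is automatic because $\varphi$ vanishes identically on the open set $(-\infty, P)$.

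To construct $\psi$, observe that the hypothesis $P < Q e^{-1/\zeta}$ is equivalent to $\log Q - \log P > 1/\zeta$, leaving slack for a $\zeta$-Lipschitz transition from $0$ to $1$. Choose $\lambda \in (0, \zeta)$ and $\epsilon > 0$ small enough that $1/\lambda + 2\epsilon \le \log Q - \log P$, set $a = \log P + \epsilon$ and $b = a + 1/\lambda$, and let $\tilde\psi$ be the continuous piecewise-linear function that is $0$ on $(-\infty, a]$, has constant slope $\lambda$ on $[a, b]$, and equals $1$ on $[b, \infty)$. Convolve $\tilde\psi$ with a non-negative $C^\infty$ mollifier $\eta$ of total mass $1$ supported in $[-\epsilon, \epsilon]$ to obtain $\psi := \tilde\psi * \eta$. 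The convolution is $C^\infty$, non-decreasing, $\lambda$-Lipschitz (hence $\zeta$-Lipschitz), takes values in $[0,1]$, and since $\tilde\psi$ vanishes on $(-\infty, a]$ and equals $1$ on $[b, \infty)$, the convolved function vanishes on $(-\infty, a - \epsilon] = (-\infty, \log P]$ and equals $1$ on $[b + \epsilon, \infty) \subseteq [\log Q, \infty)$.

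There is no serious obstacle here: the logarithmic substitution reduces the problem to a routine mollification of a piecewise-linear ramp, and the hypothesis $P < Q e^{-1/\zeta}$ provides exactly the room needed to absorb both the transition and the mollification width under the $\zeta$-Lipschitz bound.
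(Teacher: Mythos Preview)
Your proof is correct and uses the same two ingredients as the paper --- the logarithm and a mollification --- but in a different and cleaner order. The paper first builds the continuous function $\psi(x) = \zeta' \log(x/P')$ on $[P',Q']$ directly in the $x$-variable and then convolves with a bump supported in $[-a,a]$; because the convolution is taken in the original variable, the multiplicative inequality $\varphi((1+\sigma)x)-\varphi(x)\le\zeta\sigma$ is \emph{not} automatically preserved, and the paper spends most of the proof on a careful case analysis (bounding $(y-x)/(x-t)$ by $(y-x)/x$ up to the factor $\zeta'/\zeta$, which forces the extra smallness condition $a < (\zeta-\zeta')P/(1+\zeta)$). You instead substitute $u=\log x$ first, which turns the multiplicative condition into an ordinary Lipschitz bound; since convolution with a probability measure preserves a Lipschitz constant, the verification becomes a one-liner. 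The price is negligible (you must note that $\varphi$ is smooth across $x=0$, which you do), and the gain is that the delicate estimate in the paper disappears entirely.
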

\begin{proof}
Fix $P', Q', \zeta'$ such that
$P < P' < Q' = e^{1/\zeta'} P' < Q$
and $0 < \zeta' < \zeta$.
Now, let $\psi(x)$ be 
$0$ when $x \le P' $, $1$ when $x \ge Q'$, and
$\zeta' \log(x/P')$ when $P' \le x \le Q'$.
Then
$\psi( (1 + \sigma) x) - \varphi(x) \le \zeta' \sigma$
whenever $\sigma, x > 0$, but $\psi$ does not
satisfy the lemma because, although it is continuous, it is not $C^1$.

To obtain a $C^\infty$ function,
fix $a > 0$ such that $a < Q - Q'$ and $a < P' - P$ and
$a < (\zeta -\zeta') P / (1 + \zeta)$,
and convolve $\psi$ with a smooth function supported on $[-a,a]$.
Let $\delta: \RRR \to [0,1]$ be a $C^\infty$ function such
that $\delta(t) = 0$ whenever $|t| \ge a$ and 
$\delta(t) = \delta(-t)$ for all $t$ and
$\int^\infty_{-\infty} \delta(t) \, dt = 1$.
Then let 
\[
\varphi(x) = 
\int^\infty_{-\infty} \delta(t) \psi(x - t) \, dt = 
\int^\infty_{-\infty} \delta(x - u) \psi(u) \, du \ \ .
\]
Then $\varphi$ satisfies everything required except possibly
for $\varphi( (1 + \sigma) x) - \varphi(x) \le \zeta \sigma$
whenever $\sigma, x > 0$.  Rewrite this as the equivalent
\[
0 < x < y \to
\varphi(y) - \varphi(x)  \le \zeta (y - x) / x  \ \ .\tag{$\ast$}
\]
This is clear when $y \le P$ (since then $\varphi(y) - \varphi(x) = 0$),
so assume always that $y > P$.  
Also, $(\ast)$ is clear when $\zeta (y - x) / x \ge 1$, which
is equivalent to $\zeta y \ge (1 + \zeta) x$.
Using $y > P$, we may assume now also that 
$\zeta P < (1 + \zeta) x$.
This implies that $x - a > 0$ (using our third assumption on $a$),
which justifies the following, using
$0 < u < v \to \psi(v) - \psi(u)  \le \zeta' (v - u) / u $:
\[
\varphi(y) - \varphi(x)  =
\int^\infty_{-\infty} \delta(t) [\psi(y - t) - \psi(x - t)] \, dt \le
\zeta' \int^\infty_{-\infty} \delta(t) [(y - x) / (x - t)] \, dt  \ \ .
\]
This will give us $(\ast)$ \emph{if} we know that
\[
\forall t \in [-a, a] \; \;
\big( \zeta' [(y - x) / (x - t)] \le \zeta (y - x) / x \big) \ \ .
\tag{\dag}
\]
But $(\dag)$ is equivalent to
$\zeta' / \zeta \le   \min\{ (x - t ) / x : t \in [-a,a]\}$,
and this $\min$ is just $1 - a/x $, so we shall have $(\dag)$
if $a/x \le 1 - \zeta' / \zeta = (\zeta - \zeta')/\zeta$.
Since we are assuming that $x > \zeta P / (1 + \zeta) $,
we just need $a \le (\zeta - \zeta') P / (1 + \zeta) $,
which follows from our third assumption on $a$.
\end{proof}

\begin{lemma}
\label{lemma-transl-special}
Lemma \ref{lemma-transl-near-0} holds in the special case that
$\vec d = \vec e = \vec 0$ and 
$f(\vec x) = A \vec x$ in some neighborhood of $\vec 0$.
\end{lemma}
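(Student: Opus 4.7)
The plan is to leave $f$ unchanged outside a small ball $B(\vec 0, Q_0)$ and, inside the ball, replace $f(\vec x) = A \vec x$ (with $A := J_f(\vec 0)$) by a radial deformation $h(\vec x) = A(\|\vec x\|) \vec x$, where the matrix $A(r)$ interpolates smoothly from $A(0) = I$ (so that $h$ is the identity near $\vec 0$) to $A(r) = A$ for $r$ near and above $Q_0$ (so the interior and exterior pieces glue smoothly). The interpolation will be built by reparametrizing the $C^\infty$ path $\Gamma : [0,1] \to \NN^n_\theta$ from Lemma \ref{lemma-path} with $\Gamma(0) = I$ and $\Gamma(1) = A$, using the logarithmic cutoff of Lemma \ref{lemma-smooth-log} to enforce the Lipschitz hypothesis $(\ast)$ of Lemma \ref{lemma-morph}.

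Concretely, fix $\rho > 0$ with $f(\vec x) = A \vec x$ on $\|\vec x\| \le \rho$; since $\Gamma([0,1])$ is compact in the open set $\NN^n_\theta$, the quantities $\theta_0 := \sup_t \tw(\Gamma(t))$, $M := \sup_t \|\Gamma(t)\iv\|$ and $L := \sup_t \|\Gamma'(t)\|$ are finite with $\theta_0 < \theta$. Pick $\varepsilon' \in (0,\pi/2)$ with $\theta_0 + \varepsilon'/2 < \theta$, let $\zeta := \varepsilon'/(2\pi M L)$, and apply Lemma \ref{lemma-smooth-log} with $P < Q_0 < \rho$ satisfying $P < Q_0 e^{-1/\zeta}$ to obtain a smooth $\varphi : \RRR \to [0,1]$ that vanishes on $(-\infty, P]$, equals $1$ on $[Q_0, \infty)$, and obeys $\varphi((1+\sigma)r) - \varphi(r) \le \zeta\sigma$. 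Set $A(r) := \Gamma(\varphi(r))$ and $h(\vec x) := A(\|\vec x\|)\vec x$; the Lipschitz property of $\Gamma$ turns the bound on $\varphi$ into $\|A((1+\sigma)r) - A(r)\| \le L\zeta\sigma < (\varepsilon'\sigma)/(\pi M)$, so Lemma \ref{lemma-morph} yields that $h$ is globally 1-1 on $\RRR^n$ with $\tw(h) < \theta_0 + \varepsilon'/2 < \theta$. Define
\[
g(\vec x) := \begin{cases} h(\vec x), & \|\vec x\| \le Q_0, \\ f(\vec x), & \|\vec x\| \ge Q_0; \end{cases}
\]
since $\varphi \equiv 1$ on $[Q_0, \infty)$ and $f = A\cdot$ on $B(\vec 0, \rho)$, both pieces agree with $\vec x \mapsto A\vec x$ in a neighborhood of $\|\vec x\| = Q_0$, so $g$ is $C^1$ at the seam.

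The easy verifications are: $g(\vec 0) = \vec 0$, $g$ is the identity on $B(\vec 0, P)$, and $g = f$ outside $B(\vec 0, Q_0)$; choosing $Q_0 < \min(\varepsilon, \varepsilon/\|A\|)$ also gives the ``support'' condition and makes $d(f,g) \le 2(\sup_t \|\Gamma(t)\|) Q_0 < \varepsilon$. For $g \in \FF_\theta$ I still need a nonvanishing Jacobian: a product-rule calculation shows $\det J_h(\vec v)$ equals $\det A(\|\vec v\|)$ times a rank-one correction factor $1 + (\vec v^T A(\|\vec v\|)\iv A'(\|\vec v\|) \vec v)/\|\vec v\|$, which lies in $(1/2, 3/2)$ because $\|\vec v\|\,|\varphi'(\|\vec v\|)| \le \zeta$ and $ML\zeta = \varepsilon'/(2\pi) < 1/2$; thus $\det J_g > 0$, and combined with global injectivity via Lemma \ref{lemma-inj-surj} (whose hypothesis (3) is inherited from $f$), $g$ is a $C^1$ diffeomorphism. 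The main obstacle is showing $\tw(g) < \theta$ \emph{globally}, not just within each piece. Pairs $(\vec x_0, \vec x_1)$ both inside $B(\vec 0, Q_0)$ are controlled by $\tw(h)$ and both outside by $\tw(f)$; a mixed pair with $\|\vec x_1\| \le \rho$ reduces to the $h$-case because $f(\vec x_1) = A\vec x_1 = h(\vec x_1)$. The genuinely new case is $\|\vec x_0\| < Q_0$ and $\|\vec x_1\| > \rho$: here $f(\vec x_0) = A\vec x_0$, so the discrepancy $\|f(\vec x_0) - h(\vec x_0)\| \le 2(\sup_t \|\Gamma(t)\|) Q_0$ is $O(Q_0)$, while $\|f(\vec x_1) - f(\vec x_0)\|$ is bounded below by $\delta_0/2$, where $\delta_0 := \inf\{\|f(\vec x)\| : \|\vec x\| \ge \rho\} > 0$ (positive because $f$ is a homeomorphism with $f(\vec 0) = \vec 0$ and $f(\vec x) = \vec c + \vec x$ at infinity). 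Lemma \ref{lemma-ball} then bounds the angular perturbation from replacing $f(\vec x_0)$ by $h(\vec x_0)$ by $O(Q_0)$, so shrinking $Q_0$ makes it less than $\eta/2$, where $\eta := \theta - \tw(f) > 0$; added to $\tw(f)$ the total angle stays below $\theta$. The delicate part is coordinating the four small parameters $\varepsilon', \zeta, P, Q_0$ so that all estimates hold simultaneously.
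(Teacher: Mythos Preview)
Your approach is the paper's: build the path $\Gamma$ via Lemma~\ref{lemma-path}, reparametrize it with the logarithmic cutoff of Lemma~\ref{lemma-smooth-log}, feed $A(r)=\Gamma(\varphi(r))$ into Lemma~\ref{lemma-morph} to get the radial deformation $h$, glue $h$ to $f$ across the annulus where both equal $A\cdot$, and handle the cross-piece twist with Lemma~\ref{lemma-ball}. One cosmetic difference: the paper shows $J_h$ is nonsingular by quoting the bi-Lipschitz estimate (\dag) from Lemma~\ref{lemma-morph} (so $h\iv$ is Lipschitz), whereas you compute $\det J_h$ directly via the matrix determinant lemma; both are fine.

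There is one real slip. The phrase ``global injectivity via Lemma~\ref{lemma-inj-surj}'' is backwards: that lemma upgrades an injection to a bijection, it does not supply injectivity. You have $h$ injective and $f$ injective, but this does not make $g$ injective in the mixed case $\|\vec x_0\|<Q_0$, $\|\vec x_1\|>\rho$, where $g(\vec x_0)=h(\vec x_0)$ and $g(\vec x_1)=f(\vec x_1)$ come from different maps. The fix is already implicit in your twist argument: for $Q_0$ small, $\|g(\vec x_0)\|\le(\sup_t\|\Gamma(t)\|)\,Q_0<\delta_0\le\|g(\vec x_1)\|$, so $g(\vec x_0)\ne g(\vec x_1)$. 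The paper makes exactly this estimate explicit (its condition (c), $JQ<C$, is your $JQ_0<\delta_0$). You should also tighten the constraint on $Q_0$: to get $d(f,g)\le 2JQ_0<\varepsilon$ you need $Q_0<\varepsilon/(2J)$, which is not implied by $Q_0<\min(\varepsilon,\varepsilon/\|A\|)$.
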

\begin{proof}
Fix $\hat \theta \in (0, \theta)$ such that $f \in \FF_{\hat \theta}$;
make sure that $\theta - \hat \theta < \pi/2$.
Then, applying Lemma \ref{lemma-path}, let
$\Gamma: [0,1] \to \NN^n_{\hat\theta}$ be a $C^\infty$
path in $\NN^n_{\hat\theta}$ with $\Gamma(0) = I$ and $\Gamma(1) = A$.
Note that a smooth path is also Lipschitz, so fix $K > 0$
such that $\|\Gamma(t_0) - \Gamma(t_1)\| \le K |t_0 - t_1|$
for all $t_0,t_1 \in [0,1]$.  Also fix $R > 0$ such that
$f(\vec v) = A \vec v$ whenever $\|\vec v \| \le R$.
Let $M = \sup\{ \| (\Gamma(t)) \iv \| : t  \in [0,1] \}$.
Let $C = \inf\{ \|f(\vec v)  \| : \|\vec v\| \ge R \}$.
Let $J = \sup\{ \| \Gamma(t) \| :  t \in [0,1] \}$.
Then, choose $Q, \zeta$ satisfying:
\begin{itemizz}
\item[a.] $0 < \zeta <  (\theta - \hat \theta) / (\pi M K) \le 1/(2KM)$.
\item[b.] $0 < Q < R$ 
\item[c.] $J Q < C$ and
$JQ / ( C - \|A\| Q) < (\theta - \hat\theta)/\pi$.
\item[d.] $Q < \varepsilon/2$.
\item[e.] $JQ  < \varepsilon/2$ and $\forall \vec x \,
[ \| \vec x \| \le Q \to \| f(\vec x) \| < \varepsilon/2 ]$.
\end{itemizz}

Fix $P \in (0, Q e^{-1/\zeta})$, and then 
fix $\varphi$ as in Lemma \ref{lemma-smooth-log}.
Let $A(r) = \Gamma(\varphi(r))$.  Then $A(r) = I$ for $r \le P$
and $A(r) = A$ for $r \ge Q$.
Define $h(\vec v) = A( \| \vec v \| ) \vec v$.
By Lemma \ref{lemma-morph},
$\tw(h) < \theta$ and $h$ is 1-1 if we can show:
\[
\| \Gamma( \varphi( ( 1 + \sigma) r)) - \Gamma(\varphi(r)) \| <
\frac{  (\theta - \hat \theta) } {\pi  M} \sigma
\qquad (\forall \sigma, r > 0) \ \ .  
\]
But this follows from (a) above, using
$\varphi( (1 + \sigma) r) - \varphi(r) \le \zeta \sigma$ and 
our Lipschitz constant $K$, which implies that
$ \| \Gamma( \varphi( ( 1 + \sigma) r)) - \Gamma(\varphi(r)) \| \le
\zeta K \sigma$.

Note that $h(\vec v) = f(\vec v)$ whenever $Q \le \vec v \le R$.
Let $g(\vec v)$ be
$h(\vec v)$ when $\| \vec v \| \le R$ and
$f(\vec v)$ when $\| \vec v \| \ge Q$.

To show that $g$ is 1-1:  
fix $v_0, v_1$ with $v_0 \ne v_1$; we must show that $g(v_0) \ne g(v_1)$.
Let $r_i = \| \vec v_i\|$.
We may assume that $r_0 \le r_1$.  But also,
$g(v_0) \ne g(v_1)$ is clear whenever
$g \res \{ \vec v_0 , \vec v_1 \}$ equals either
$f \res \{ \vec v_0 , \vec v_1 \}$ or
$h \res \{ \vec v_0 , \vec v_1 \}$, so we may assume that
$r_0 < Q$ and $r_1 > R$.
Then $\|g(v_0)\| = \| A(r_0) v_0 \| \le JQ$ and
$\|g(v_1)\| = \| f(v_1) \| \ge C$, so $g(v_0) \ne g(v_1)$ because $JQ < C$.

To prove that $\tw(g) < \theta$, 
fix $v_0, v_1, r_0, r_1$ as above with
$v_0 \ne v_1$; we must show that 
that $\angle(\vec v_1 - \vec v_0, g(\vec v_1) - g(\vec v_0) ) < \theta$.
By the same reasoning, we may assume that $r_0 < Q$ and $r_1 > R$. 

Now, we have
$\angle(\vec v_1 - \vec v_0, f(\vec v_1) - f(\vec v_0) ) < \hat\theta$,
and shall use Lemma \ref{lemma-ball} to
conclude that $\angle(\vec v_1 - \vec v_0, g(\vec v_1) - g(\vec v_0) )$
by verifying that
\[
\beta := 
\angle( f(\vec v_1) - f(\vec v_0) , g(\vec v_1) - g(\vec v_0))  \le
\theta - \hat\theta  \ \ .
\]
Note that $ g(\vec v_1) = f(\vec v_1) $, while
$ g(\vec v_0) = h(\vec v_0) = A(r_0) \vec v_0$ and
$ f(\vec v_0) = A \vec v_0$.
Then the ``distance'' is $T = \|  f(\vec v_1) -  f(\vec v_0) \| \ge
C - \|A\| Q$,
and the two ``radii'' are
$\| f(\vec v_1) - g(\vec v_1) \| = 0$ and
$\| f(\vec v_0) - g(\vec v_0) \| =  \| (A - A(r_0) ) \vec v_0 \| \le 2JQ$,
so $\beta \le \pi \cdot  2JQ \div 2( C - \|A\| Q ) \le \theta - \hat\theta $
by (c).

To prove that 
$g(\vec x) = f(\vec x)$ whenever $\|\vec x \| \ge \varepsilon$
or $\|f(\vec x)  \| \ge \varepsilon$:
For $\|\vec x \| \ge \varepsilon$, just use $Q < \varepsilon$, by (d).
For $\|f(\vec x)  \| \ge \varepsilon$, use (e), which implies
that $\|f(\vec x)  \| \ge \varepsilon \to \|\vec x\| \ge Q 
\to g(\vec x) = f(\vec x)$.

To prove that $\| g - f \| < \varepsilon$, use (e) to show
that $\|\vec x \| \le Q$ implies that
$\| g(x) - f(x)\| \le \| g(\vec x) \| + \| f(\vec x) \|
\le JQ +  \| f(\vec x) \| < \varepsilon/2 + \varepsilon/2 $.

To prove that $\| g\iv - f\iv \| < \varepsilon$:
We want $f(\vec x) = g(\vec z) \to \| \vec x - \vec z \| < \varepsilon$.
Since $f, g$ are both 1-1, this is trivial unless
$f(\vec x) \ne g(\vec x)$ and $f(\vec z) \ne g(\vec z)$.
Then $\|\vec x\|, \|\vec z\| < Q$, so apply the fact that $Q < \varepsilon / 2$.

Finally, we must prove that
$g\iv$ is $C^1$.  Since $f\iv$ is $C^1$, it is sufficient to prove that
$h\iv$ is $C^1$.
Since $h$ is a $C^1$ bijection, it is sufficient to prove
that $J_h$  is everywhere non-singular, which follows if
we show that $h\iv$ is Lipschitz; but this is clear from
Lemma \ref{lemma-morph}.
\end{proof}

Next, we need to show that every function in $\FF_\theta$
is close to some $f \in \FF_\theta$ 
such that $f(\vec x) = A \vec x$ in some neighborhood of $\vec 0$.
We first show that every ``small modification'' of a function in 
$\FF_\theta$ also lies in $\FF_\theta$.

\begin{lemma}
\label{lemma-small-change}
Fix $f \in \FF_\theta$, and fix $\hat \theta \in (\tw(f), \theta)$ 
with $\theta - \hat \theta < \pi/2$.
Let $g: \RRR^n \to \RRR^n$ be a $C^1$ function such that 
$\exists r \; \forall \vec x\; [\|\vec x\| \ge r \to g(\vec x) =  \vec 0]$.
Assume also
\[
\| g(\vec v_1) - g(\vec v_0) \| \le \frac  2 \pi
(\theta -  \hat \theta) \| f(\vec v_1) - f(\vec v_0) \|
\qquad (\forall \vec v_0, \vec v_1 \in \RRR^n) \ \ .  
\tag{\ding{"50}}
\]
Then $f + g \in \FF_\theta$.
Furthermore, $d(f, f+g) \le \|g\| \cdot \max(1, \| J_{f\iv} \|)$.
\end{lemma}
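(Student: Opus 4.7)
The plan is to verify properties (1)--(4) of Definition \ref{def-FF} for $f+g$ and then estimate $d(f,f+g)$. Property (3) is immediate: if $f(\vec x) = \vec c_f + \vec x$ for $\|\vec x\| \ge r_f$ and $g$ vanishes outside the ball of radius $r_g$, then $(f+g)(\vec x) = \vec c_f + \vec x$ for $\|\vec x\| \ge \max(r_f, r_g)$.

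For (1), hypothesis (\ding{"50}) combined with $\theta - \hat\theta < \pi/2$ gives $\frac{2}{\pi}(\theta - \hat\theta) < 1$, so $\|g(\vec v_1) - g(\vec v_0)\| < \|f(\vec v_1) - f(\vec v_0)\|$ whenever $\vec v_0 \ne \vec v_1$ (the right side is positive since $f$ is injective). This prevents $(f+g)(\vec v_0) = (f+g)(\vec v_1)$, yielding injectivity; surjectivity then follows from Lemma \ref{lemma-inj-surj}, since $f+g$ is continuous and already satisfies (3). I expect the twist bound (4) to be the main hurdle. Fix distinct $\vec v_0, \vec v_1$; by the triangle inequality for angles (spherical distance on the unit sphere),
\[
\angle(\vec v_1 - \vec v_0,\ (f+g)(\vec v_1) - (f+g)(\vec v_0))
\;\le\;
\angle(\vec v_1 - \vec v_0,\ f(\vec v_1) - f(\vec v_0)) + \beta,
\]
where $\beta := \angle(f(\vec v_1) - f(\vec v_0),\ (f+g)(\vec v_1) - (f+g)(\vec v_0))$. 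The first summand is at most $\tw(f) < \hat\theta$. To bound $\beta$, apply Lemma \ref{lemma-ball} with distance $T = \|f(\vec v_1) - f(\vec v_0)\|$ and radii $0$ and $s = \|g(\vec v_1) - g(\vec v_0)\|$; hypothesis (\ding{"50}) gives $s \le \frac{2}{\pi}(\theta - \hat\theta) T < T$, so the premise $T \ge r+s$ holds, and the lemma yields $\beta \le \pi s/(2T) \le \theta - \hat\theta$. Summing, the total angle is strictly less than $\hat\theta + (\theta - \hat\theta) = \theta$.

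For (2), $f+g$ is $C^1$ by construction, so it suffices (via the inverse function theorem, since $f+g$ is already a bijection) to show that $J_{f+g}(\vec x)$ is non-singular for every $\vec x$. Passing to derivatives in (\ding{"50}) along a direction $\vec u$ (take $\vec v_1 = \vec v_0 + \varepsilon \vec u$, divide by $\varepsilon$, and let $\varepsilon \to 0$) gives $\|J_g(\vec x)\vec u\| \le \frac{2}{\pi}(\theta - \hat\theta)\|J_f(\vec x)\vec u\|$, whence $\|J_{f+g}(\vec x)\vec u\| \ge (1 - \frac{2}{\pi}(\theta - \hat\theta)) \|J_f(\vec x)\vec u\| > 0$ for $\vec u \ne \vec 0$, since $J_f(\vec x)$ is invertible.

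Finally, for the distance estimate, $\|f - (f+g)\| = \|g\|$ trivially. For the other coordinate, given $\vec y$ set $\vec x = (f+g)\iv(\vec y)$, so that $f(\vec x) = \vec y - g(\vec x)$; then $f\iv(\vec y) - (f+g)\iv(\vec y) = f\iv(f(\vec x) + g(\vec x)) - f\iv(f(\vec x))$, and Lemma \ref{lemma-lip} applied to $f\iv$ gives $\|f\iv(\vec y) - (f+g)\iv(\vec y)\| \le \|J_{f\iv}\| \cdot \|g\|$. Taking the max of the two bounds yields $d(f, f+g) \le \|g\| \cdot \max(1, \|J_{f\iv}\|)$.
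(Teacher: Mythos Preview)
Your proof is correct and follows essentially the same route as the paper: injectivity from the strict contraction in (\ding{"50}), surjectivity via Lemma~\ref{lemma-inj-surj}, the twist bound via Lemma~\ref{lemma-ball} with $T = \|f(\vec v_1)-f(\vec v_0)\|$ and radii $0$ and $\|g(\vec v_1)-g(\vec v_0)\|$, non-singularity of $J_{f+g}$ by differentiating (\ding{"50}), and the inverse-distance bound via the Lipschitz estimate for $f\iv$. The only cosmetic difference is that the paper names $f\iv(y)=x$ and $(f+g)\iv(y)=z$ and bounds $\|x-z\|$ directly, whereas you rewrite the same quantity as $f\iv(f(\vec x)+g(\vec x))-f\iv(f(\vec x))$ before applying Lemma~\ref{lemma-lip}; the computations are identical.
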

\begin{proof}
Let $h = f+g$.
It is clear that $h$ is $C^1$ and 1-1 and satisfies (3)
of Definition \ref{def-FF}.
It follows from Lemma \ref{lemma-inj-surj} that $h$ is a bijection.
It is easy to see from (\ding{"50}) that $J_h(\vec x)$ must be 
non-singular, so that $h\iv$ is also $C^1$.

To prove that $\tw(h) < \theta$,
we must show that 
$\angle(\vec v_1 - \vec v_0, h(\vec v_1) - h(\vec v_0) ) < \theta$.
whenever $\vec v_0 \ne \vec v_1$.  Now
$\angle(\vec v_1 - \vec v_0, f(\vec v_1) - f(\vec v_0) ) < \hat\theta$,
so we apply Lemma \ref{lemma-ball} to show that
\[
\beta := 
\angle( f(\vec v_1) - f(\vec v_0) , h(\vec v_1) - h(\vec v_0))  \le
\theta - \hat\theta  \ \ .
\]
Here, $h = f+g$, so $\beta =  \angle( f(\vec v_1) - f(\vec v_0) ,
 f(\vec v_1)  - [ f(\vec v_0) +  g(\vec v_0) -  g(\vec v_1) ] )$.
Then the ``distance'' is $T = \| f(\vec v_1) - f(\vec v_0)\| $ and
the two radii are $0$ and $\| g(\vec v_1) - g(\vec v_0)\| $,
so $\beta \le \pi \cdot  \| g(\vec v_1) - g(\vec v_0)\| \div
2 \| f(\vec v_1) - f(\vec v_0)\| \le \theta - \hat\theta$,
using (\ding{"50}).

Regarding $d(f, f+g)$ and referring to Definition \ref{def-dist}:
It is obvious that $\| f - (f + g) \| = \|g\|$, but 
to bound $\| f\iv - (f + g)\iv \|$:  
say $f\iv(y) = x$ and $(f + g)\iv (y) = z$.
Then $f(x) = y = f(z) + g(z)$.  Now
\[
\|g\| \ge \| f(z) - (f(z) + g(z)) \| =
\| f(z) - f(x) \| \ge \|x - z\| / \| J_{f\iv} \| \ \ ,
\]
so $\|x - z\| \le  \|g\| \cdot  \| J_{f\iv} \|$.
\end{proof}

To get such a $g$ that makes $f+g$ linear near a given point, use:

\begin{lemma}
\label{lemma-near-point}
Fix $f \in \FF_\theta$, and assume that $f(\vec 0) = \vec 0$.
Let $A = J_f(\vec 0)$.
Fix any
$\varepsilon > 0$.
Then there is a $C^1$ function $g$
and a $\hat \theta \in (\tw(f), \theta)$ 
such that:
$\| g\| \le \varepsilon$,
and \textup(\ding{"50}\textup) of Lemma \ref{lemma-small-change} holds,
and
$\forall \vec x\; [\|\vec x\| \ge \varepsilon/2 \to g(\vec x) =  \vec 0]$,
and $f(\vec x) + g(\vec x) = A\vec x$ holds in some neighborhood of $\vec 0$,
and $d(f, f+g) \le \varepsilon$.
\end{lemma}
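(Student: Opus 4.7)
The plan is to set $g := \varphi \cdot E$, where $E(\vec x) := A\vec x - f(\vec x)$ is the first-order error of $f$ at $\vec 0$ and $\varphi \in C^\infty(\RRR^n,[0,1])$ is a radial bump equal to $1$ on $B(\vec 0, r/2)$, vanishing outside $B(\vec 0, r)$, with $\|\nabla\varphi\| \le K/r$ for some absolute constant $K$; here $r\in(0,\varepsilon/2]$ is a scale to be fixed. Then $(f+g)(\vec x)=A\vec x$ on $B(\vec 0, r/2)$, and $g$ is supported in $B(\vec 0, r) \subseteq B(\vec 0,\varepsilon/2)$, so with $r\le\varepsilon/2$ the support requirement and the linearity of $f+g$ near $\vec 0$ are handled at once.

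Pick any $\hat\theta \in (\tw(f),\theta)$ with $\theta-\hat\theta<\pi/2$, for instance the midpoint (shrunk if necessary). Since $f\iv \in \FF_\theta$ has $J_{f\iv}$ continuous and equal to $I$ off a bounded set, $N := \|J_{f\iv}\|<\infty$. Set $L := 2(\theta-\hat\theta)/(\pi N)$. If we can arrange $\|J_g\|\le L$ everywhere, then $g$ is $L$-Lipschitz, and Lemma \ref{lemma-lip} applied to $f\iv$ gives $\|\vec v_1-\vec v_0\|\le N\|f(\vec v_1)-f(\vec v_0)\|$, so
\[
\|g(\vec v_1)-g(\vec v_0)\| \le L\|\vec v_1-\vec v_0\| \le LN\|f(\vec v_1)-f(\vec v_0)\| = \tfrac{2}{\pi}(\theta-\hat\theta)\|f(\vec v_1)-f(\vec v_0)\|,
\]
which is exactly (\ding{"50}).

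To bound $J_g$, differentiate coordinatewise to get $J_g(\vec x) = E(\vec x)(\nabla\varphi(\vec x))^T + \varphi(\vec x)J_E(\vec x)$, where $J_E = A - J_f$. Because $E(\vec 0)=\vec 0$ and $J_E(\vec 0)=0$, continuity of $J_f$ lets us pick $r$ small enough that $\|J_E(\vec x)\|<\eta$ on $B(\vec 0,r)$ for any prescribed $\eta>0$; integrating this bound along segments from $\vec 0$ gives $\|E(\vec x)\|\le \eta\|\vec x\|\le \eta r$ there. Hence $\|J_g(\vec x)\|\le \eta r\cdot (K/r) + \eta = (K+1)\eta$ on $B(\vec 0,r)$ and $J_g=0$ outside. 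Choose $\eta \le L/(K+1)$ and further shrink $r$ so that both $\eta r \le \varepsilon/\max(1,N)$ and $r\le\varepsilon/2$. Then $\|g\|\le \eta r \le \varepsilon$, and Lemma \ref{lemma-small-change} (which now applies, since (\ding{"50}) holds) yields $d(f,f+g)\le \|g\|\max(1,N)\le \varepsilon$.

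The main obstacle is condition (\ding{"50}), which ties $g$ quantitatively to $f$; the key trick is to convert it into a global Lipschitz bound on $g$ through the Lipschitz constant $N$ of $f\iv$, and then exploit $J_E(\vec 0)=0$ so that both the $\|E\|\cdot\|\nabla\varphi\|\sim\eta r\cdot(1/r)$ term and the $\|J_E\|$ term in $J_g$ remain simultaneously small despite the $1/r$ blow-up in $\nabla\varphi$ as the cutoff is localized.
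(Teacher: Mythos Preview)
Your argument is correct. The overall shape matches the paper's proof: both define $g(\vec x)=\psi(\|\vec x\|)\,(A\vec x - f(\vec x))$ for a radial cutoff $\psi$, both reduce (\ding{"50}) to an ordinary Lipschitz bound on $g$ by inserting the bi-Lipschitz estimate $\|\vec v_1-\vec v_0\|\le \|J_{f^{-1}}\|\,\|f(\vec v_1)-f(\vec v_0)\|$, and both then appeal to Lemma~\ref{lemma-small-change} for the conclusion.

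Where you diverge is in the choice of cutoff and in how you verify the Lipschitz bound. The paper uses the special ``logarithmic'' cutoff of Lemma~\ref{lemma-smooth-log} (with its multiplicative increment control $\psi(x)-\psi((1+\sigma)x)\le\zeta\sigma$) and then checks $\|g(\vec v_1)-g(\vec v_0)\|\le(\varepsilon/L)\|\vec v_1-\vec v_0\|$ by a case analysis on the relative positions of $\vec v_0,\vec v_1$. You instead take a standard bump with $\|\nabla\varphi\|\le K/r$ and bound $\|J_g\|$ directly via the product rule, exploiting that $\|E(\vec x)\|\le\eta r$ cancels the $1/r$ in $\|\nabla\varphi\|$. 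This is more elementary: it bypasses Lemma~\ref{lemma-smooth-log} entirely and replaces the case analysis by a one-line Jacobian estimate. The paper's logarithmic cutoff is genuinely needed elsewhere (e.g.\ in Lemma~\ref{lemma-morph}, where the matrix path has no analogue of $J_E(\vec 0)=0$ to absorb the $1/r$), but for this particular lemma your simpler bump suffices.
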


\begin{proof}
Let $M = \max(1, \| J_{f} \|)$ and $L = \max(1, \| J_{f\iv} \|)$.
Then, by Lemma \ref{lemma-lip}, 
$\|f(\vec x_1) - f(\vec x_0) \| \le M \|  \vec x_0 - \vec x_1 \|$
and
$\|f\iv(\vec y_1) - f\iv(\vec y_0) \| \le L \|  \vec y_0 - \vec y_1 \|$
holds for all $ \vec y_0, \vec y_1, \vec x_0, \vec x_1$.
Fix $\hat \theta \in (\tw(f), \theta)$, 
with $\theta - \hat \theta < \pi/2$.  Shrinking $\varepsilon$ if necessary,
we may assume that $\varepsilon \le 2 (\theta -  \hat \theta) /\pi$;
then (\ding{"50}) will follow from:
\[
\| g(\vec v_1) - g(\vec v_0) \| \le (\varepsilon/L) \| \vec v_1 - \vec v_0 \|
\qquad (\forall \vec v_1, \vec v_0 \in \RRR^n) \ \ .  
\tag{\ding{"52}}
\]
Also, $d(f, f+g) \le \|g\| L$ by Lemma \ref{lemma-small-change},
and we shall in fact get $\|g\| \le \varepsilon/L$.

Choose $P,Q,R,\zeta$  with $0 < P < Q < R$ and
$\zeta > 0$, and choose $\psi : \RRR \to [0,1]$ to satisfy:
\begin{itemizz}
\item[a.] $\zeta \le \varepsilon / (2L)$ and $\zeta < 1$.  
\item[b.] $R \le \varepsilon/2$; and
$\| A \vec x - f(\vec x) \| \le \varepsilon/L$ 
and $\| J_f(\vec x) - A \| \le \zeta$ whenever $\|\vec x\| \le R$. 
\item[c.] $Q \le R/2$, and
$\| A \vec x\| + \|f(\vec x) \| \le (\varepsilon/L) (R/2)  $
whenever $\| \vec x \| \le Q$, and
$\zeta + \zeta^2 Q \le \varepsilon / L$.
\item[d.] $\psi$ is $C^\infty$ and non-increasing, and
$\psi(t) = 1$ for all $t \le P$, and
$\psi(t) = 0$ for all $t \ge Q$, and
$ \psi(x) - \psi((1 + \sigma) x)  \le \zeta \sigma$
whenever $\sigma, x > 0$.
\end{itemizz}
There are such $P,\psi$ as in (d) by Lemma \ref{lemma-smooth-log}.
Let $g(\vec x) = \psi(\|\vec x\|) (A \vec x - f(\vec x) )$.
Then $\| g\| \le \varepsilon/L$ by (b).
So, we are done if we verify (\ding{"52}).

Let $r_i = \| \vec v_i \|$.  We may assume that $r_0 \le r_1$.
We may also assume that $r_0 \le Q$, since otherwise (\ding{"52})
is trivial.

If $r_1 \ge R$, then $g(\vec v_1) = \vec 0$ and 
$\| \vec v_0 - \vec v_1 \| \ge (R - Q)$, so it is sufficient to verify
\[
\|  g(\vec v_0) \| \le (\varepsilon/L) (R-Q)  \ \ ,
\]
which follows from (c) above.

From now on, assume that $r_1 \le R$.
Define $\vec w( \vec v_0, \vec v_1 )$ by:
\[
\vec w( \vec v_0, \vec v_1 ) = 
f(\vec v_1) - f(\vec v_0) - A (\vec v_1 - \vec v_0)
= k(\vec v_1) - k(\vec v_0)
\;\; ; \;\;
k(\vec v) = f(\vec v) - A\vec v \ \ .
\]
Note that $J_k = J_f - A$.  Then,
$\| \vec w( \vec v_0, \vec v_1 ) \| \le \zeta \| \vec v_1 - \vec v_0 \|$
whenever $\| \vec v_1\| , \|\vec v_0 \| \le R$; to see this, use
(b) above and Lemma \ref{lemma-lip}.  
Now,
\[
 g(\vec v_1) - g(\vec v_0)  = 
\psi( r_1) (A \vec v_1 - f(\vec v_1) )   - 
\psi( r_0) (A \vec v_0 - f(\vec v_0) )  \ \ .
\]
Let $r = r_0$ and $r_1 = ( (1 + \sigma) r)$.  If $\sigma = 0$,
so $r_0 = r_1 = r$,
then
\[
\begin{array}{ll}
 \| g(\vec v_1) - g(\vec v_0) \| 
&= 
  \psi(r) \| A ( \vec v_1 - v_0 ) - f(\vec v_1) + f(\vec v_0) \| \\
&=
\psi(r) \| \vec w( \vec v_0, \vec v_1 ) \| \le
\zeta \| \vec v_1 - \vec v_0 \| \ \ ,
\end{array}
\]
so (\ding{"52}) holds by (a).
From now on, assume that $\sigma > 0$.  Now
$ g(\vec v_1) - g(\vec v_0)  = $
\[
\psi( r_1)
\big[   
A ( \vec v_1 - \vec v_0 ) - f(\vec v_1) + f(\vec v_0) \big]   +
(\psi(r_1) - \psi( r_0)) (A \vec v_0 - f(\vec v_0) )    
\]
Now $\| A \vec v_0 - f(\vec v_0) \| = \| \vec w ( \vec v_0, \vec 0) \| \le
\zeta Q$ and
$| \psi(r_1) - \psi( r_0) |  = | \psi( (1 + \sigma) r) - \psi( r) |  \le
\zeta \sigma \le \zeta \| \vec v_1 - \vec v_0 \|$, so by the above
argument, 
\[
\| g(\vec v_1) - g(\vec v_0) \|  \le
(\zeta + \zeta^2 Q) \| \vec v_1 - \vec v_0 \| \ \ .
\]
So, we are done by (c) above.
\end{proof}

\begin{proofof}{Lemma \ref{lemma-transl-near-0}}
First, replacing $f$ by $\vec x \mapsto f(\vec x + \vec d) - \vec e$,
it is sufficient to prove the lemma in the case that
$\vec d = \vec e = \vec 0$.  Now, apply
Lemma \ref{lemma-near-point}
and then Lemma \ref{lemma-transl-special}
(both with $\varepsilon/2$ instead of $\varepsilon$).
\end{proofof}

\begin{proofof}{Lemma \ref{lemma-dense-de}}
We show that 
$W := \{p :  \vec d \in \dom(\sigma_p) \}$  is dense.
Fix $p = (\sigma, h, \varkappa, \Ups) \in \PPP^\theta$
with $d \notin \dom(\sigma)$; we shall find
a $q = (\sigma_q, h_q, \varkappa_q, \Ups_q) \le p$ with $q \in W$.
Fix $\ell \in \omega$ such that
$\xi := \hgt(\vec d) + \ell \ne \hgt(\vec z)$ for all
$\vec z \in \dom(\sigma) \cup \ran(\sigma)$.
Let $E_\xi = \{\vec e \in E : \hgt(\vec e) = \xi\}$.

Let $\vec c = h(\vec d)$. 
Fix $\hat \theta \in (\tw(h), \theta)$, 
with $\theta - \hat \theta < \pi/2$.
Let $M = \max(1, \| J_{h} \|)$ and $L = \max(1, \| J_{h\iv} \|)$.
Fix $Q, P, \psi, \varepsilon, \vec e, \vec a$ so that:

\begin{itemizz}
\item[a.]
$Q < \min\{\| \vec d - \vec d' \| : d' \in \dom(\sigma) \}$ and
$\mu(B(\vec 0, MQ)) < \zeta_p$ (see Lemma \ref{lemma-smallchange}).
\item[b.]
$0 < P < Q$ and $\psi: \RRR \to [0,1]$ is a
$C^\infty$ non-decreasing function, and
$\psi(t) = 1$ for all $t \le P$, and
$\psi(t) = 0$ for all $t \ge Q$.
\item[c.]
$0 < \varepsilon <   2 (\theta -  \hat \theta) /(  \pi L \| \psi' \| )$,
and $\varepsilon < \varkappa_p / L$.
\item[d.]
$\vec e = \vec c + \vec a \in E_\xi$ and $\| \vec a\| < \varepsilon$.
\end{itemizz}
Let $h_q ( \vec d + \vec v) = h ( \vec d + \vec v) + \psi(\|\vec v\|) \vec a$;
$h_q \supset \sigma$ by (a).
Let $\sigma_q = \sigma_p \cup \{ (\vec d, \vec e) \}$.
Then $h_q \supset \sigma_q \supset \sigma_p$.
Now apply Lemma \ref{lemma-small-change}, with $f = h$ and
$g( \vec d + \vec v) = \psi(\|\vec v\|) \vec a$.
This yields $h_q \in \FF_\theta$ and
$d(h, h_q) \le L \|g\| \le L \| \vec a \| < L \varepsilon$
(using (d)).

But to see that Lemma \ref{lemma-small-change} applies here,
we need to verify (\ding{"50}); that is,
$\|g(\vec v_1) - g(\vec v_0) \| \le (2 (\theta -  \hat \theta) /\pi)\,
\| h(\vec v_1) - h(\vec v_0) \|$.
Let $r_i = \|v_i\|$; we may assume that $r_0 \le r_1$.
Then $\|g(\vec v_1) - g(\vec v_0) \| \le  \varepsilon \| \psi' \| (r_1 - r_0)$
and
$\| h(\vec v_1) - h(\vec v_0) \| \ge \| \vec v_1 - \vec v_0 | / L \ge
(r_1 - r_0) /L $, so (\ding{"50}) holds by (c).

We obtain $\varkappa_q$ and $\Ups_q$ by using
Lemma \ref{lemma-smallchange}.   This lemma requires both
$d(h,h_q) < \varkappa_p$ (which holds by (c)) and
$\mu(\overline S), \mu(\overline T) \le \zeta_p$.
For this second inequality, apply (a) and note that 
$S \subseteq B(\vec d, Q)$ and
$T \subseteq h(S) \subseteq B(\vec c, MQ)$.

Observe that $\sigma_q \in P^\theta_0$:  (P2) holds
because $\sigma_q \subset h_q$, and
(P3)(P4) hold by (d) and our choice of $\xi$.
\end{proofof}

\begin{proofof}{Lemma \ref{lemma-nice-are-dense}}
If $p \in \PPP^\theta$ and $m = | \sigma_p|$, then we use
Lemma \ref{lemma-transl-near-0} $m$ times to construct
$p = q_0 \ge q_1  \ge q_2 \cdots \ge q_m$, where $q_m$ is nice.
All $q_i$ have the same $\sigma_{q_i} = \sigma_p$,
but $h_{q_i}$ will be a translation in some neighborhood of
$i$ many of the $(\vec d, \vec e) \in \sigma_p$.
Given $q_{i}$, we use Lemma \ref{lemma-transl-near-0} to
construct $h_{q_{i+1}}$ from $h_{q_{i}}$.
But we also make sure that 
$h_{q_{i+1}}$ and $h_{q_{i}}$ are close enough
to be able to use
Lemma \ref{lemma-smallchange} to build an appropriate
$\varkappa_{q_{i+1}}$ and $\Ups_{q_{i+1}}$.
\end{proofof}

The following consequence of Lemma \ref{lemma-small-change} will be useful:

\begin{lemma}
\label{lemma-average}
Fix $\theta \in (0, \pi)$.  To each $f \in \FF_\theta$,
one can assign positive rationals
$\varepsilon_f$ and $\delta_f$ and $M_f$ such that:

Whenever $f,g \in \FF_\theta$ with 
$\delta_f = \delta_g = \delta$ and
$\varepsilon_f = \varepsilon_g = \varepsilon$
and $M_f = M_g = M$:
If $\| f - g \| < \varepsilon$ and
$\| J_f - J_g \| < \delta$ then $(f + g)/2 \in \FF_\theta$.
Furthermore, $d(f, (f + g)/2) \le \|g - f\| \cdot \max(1, \| J_{f\iv} \|)/2$.
\end{lemma}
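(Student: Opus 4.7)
The plan is to apply Lemma~\ref{lemma-small-change} to deduce $h := (f+g)/2 \in \FF_\theta$, using a translate of $f$ as the base and a compactly-supported perturbation. For each $f \in \FF_\theta$, I would set $M_f$ to be a rational strictly exceeding $\max(1, \|J_f\|, \|J_{f\iv}\|)$; choose a positive rational $\delta_f$ satisfying $\pi M_f \delta_f / 4 < \min(\theta - \tw(f),\, \pi/2)$, which is possible because $\tw(f) < \theta$; and pick any positive rational $\varepsilon_f$. The value of $\varepsilon_f$ plays no substantive role in the proof of this lemma, but is included so that the assignment fits naturally into the forcing arguments of later sections.

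Assume $\delta_f = \delta_g = \delta$, $M_f = M_g = M$, $\varepsilon_f = \varepsilon_g = \varepsilon$, $\|f - g\| < \varepsilon$, and $\|J_f - J_g\| < \delta$. First, condition~(3) of Definition~\ref{def-FF} for $h$ is immediate: if $f(\vec x) = \vec x + \vec c_f$ and $g(\vec x) = \vec x + \vec c_g$ for $\|\vec x\|$ large, then $h(\vec x) = \vec x + (\vec c_f + \vec c_g)/2$ there. To invoke Lemma~\ref{lemma-small-change} literally, put $\vec c := (\vec c_g - \vec c_f)/2$, $\tilde f := f + \vec c \in \FF_\theta$ (a rigid translate of $f$, preserving Jacobian and twist), and $g_1 := (g - f)/2 - \vec c$; then $g_1$ is $C^1$, vanishes outside a ball, and $h = \tilde f + g_1$.

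Now pick $\hat \theta \in (\tw(f),\, \theta)$ with both $\theta - \hat\theta > \delta M \pi/4$ and $\theta - \hat\theta < \pi/2$; the first is compatible with $\hat\theta > \tw(f)$ because $\delta M \pi/4 < \theta - \tw(f)$, while the second follows from $\delta M \pi/4 < \pi/2$. Hypothesis~(\ding{"50}) of Lemma~\ref{lemma-small-change} applied to $(\tilde f, g_1)$ reads $\|g_1(\vec v_1) - g_1(\vec v_0)\| \le (2/\pi)(\theta - \hat\theta)\,\|\tilde f(\vec v_1) - \tilde f(\vec v_0)\|$. Lemma~\ref{lemma-lip} applied to $g - f$ bounds the left side by $(\delta/2)\|\vec v_1 - \vec v_0\|$ (using $\|J_{g-f}\| \le \delta$), while Lemma~\ref{lemma-lip} applied to $f\iv$ bounds $\|\tilde f(\vec v_1) - \tilde f(\vec v_0)\| = \|f(\vec v_1) - f(\vec v_0)\|$ below by $\|\vec v_1 - \vec v_0\|/M$. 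The required inequality therefore reduces to $\delta M \pi/4 \le \theta - \hat\theta$, which was arranged. Lemma~\ref{lemma-small-change} then yields $h \in \FF_\theta$.

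For the furthermore bound I would compute directly: $\|h - f\| = \|g - f\|/2$; and if $h(\vec x) = f(\vec z)$, then $f(\vec z) - f(\vec x) = (g - f)(\vec x)/2$, so Lemma~\ref{lemma-lip} applied to $f\iv$ delivers $\|\vec x - \vec z\| \le \|J_{f\iv}\| \cdot \|g - f\|/2$. Taking the maximum of these two bounds yields the claimed estimate. The main technical wrinkle is the splitting $g - f = 2 g_1 + 2\vec c$ needed to invoke Lemma~\ref{lemma-small-change} as stated (since the lemma formally requires the perturbation to vanish at infinity); the remainder is a quantitative check of the twist-versus-Jacobian tradeoff encoded in the defining constraint on $\delta_f$.
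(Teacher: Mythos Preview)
Your argument is correct and follows the paper's approach: apply Lemma~\ref{lemma-small-change} with perturbation $(g-f)/2$, bounding its Lipschitz constant via $\|J_f - J_g\| < \delta$ and bounding $\|f(\vec v_1)-f(\vec v_0)\|$ below via $\|J_{f\iv}\| \le M$. Two small differences are worth recording. First, you correctly patch a point the paper glosses over: Lemma~\ref{lemma-small-change} formally requires the perturbation to vanish at infinity, so you split off the constant $\vec c = (\vec c_g - \vec c_f)/2$ and work with the translate $\tilde f = f + \vec c$; the paper simply writes $(f+g)/2 = f + (g-f)/2$ and applies the lemma directly. Second, the paper \emph{does} use $\varepsilon_f$ substantively, but only as a bookkeeping device: it sets $\varepsilon_f < 2(\theta - \tw(f))/\pi$ and then demands $\delta_f < \varepsilon_f / M_f$, so $\varepsilon_f$ carries the twist-gap information into the constraint on $\delta_f$. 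You encode the same constraint directly as $\pi M_f \delta_f/4 < \theta - \tw(f)$, and your observation that the hypothesis $\|f-g\| < \varepsilon$ is never actually invoked is correct for either argument.
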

\begin{proof}
First, let $M_f \ge \max( \|  J_f \| , \| J_{f\iv} | )$.
Then $M_f \ge 1$.  Assume always that $\delta_f < 1/(4M_f)$.

Now use Lemma \ref{lemma-small-change}.
So, $(f + g)/2  = f + h$, where $h = (g - f)/2$.
Choose $\varepsilon_f < 2(\theta - \tw(f))/\pi$.  Then 
(\ding{"50}) from Lemma \ref{lemma-small-change} is satisfied if
$ \| h(\vec v_1) - h(\vec v_0) \| \le
\varepsilon \| f(\vec v_0) - f(\vec v_1) \| $
for all $\vec v_0, \vec v_1 $.

Since $ \| f(\vec v_1) - f(\vec v_0) \| \ge \| \vec v_1 -\vec v_0 \|/ M $,
it is sufficient to ensure that
$ \| h(\vec v_1) - h(\vec v_0) \| \le
(\varepsilon / M) \| \vec v_1 -\vec v_0 \| $.
Let $N = \| J_f - J_g \| = \sup_x \| J_f(x) - J_g(x) \| = 2 \sup_x \| J_h\|$.
Then $ \| h(\vec v_1) - h(\vec v_0) \| \le (N/2) \| \vec v_1 -\vec v_0 \|$,
so just demand that $N < \varepsilon / M$;
that is $\delta_f < \varepsilon_f / M_f$.
\end{proof}

\section{Proof of ccc}
\label{sec-ccc}
Our proof imitates the ccc proof in Lemma \ref{lemma-main-1}.
We start with $p_\alpha$ for $\alpha < \omega_1$ and prove that two
of them are compatible.
By Lemma \ref{lemma-nice-are-dense}, we may assume that
all the $p_\alpha$ are nice.
We now apply some preliminary thinning.
Since there are only 
$\aleph_0$ possibilities for $\varkappa_p$ and $\Ups_p$,
we may assume that each
$p_\alpha = (\sigma_\alpha, h_\alpha, \varkappa, \Ups)$.
We may assume that $| \sigma_\alpha | = t$ for all $\alpha$,
so $\sigma_\alpha = \{
(d_\alpha^0, e_\alpha^0 ), \ldots, (d_\alpha^{t-1} , e_\alpha^{t-1}) \}$.
We may also assume that there is a fixed $\hat \theta \in (\pi/2, \theta)$
such that all $p_\alpha \in \PPP^{\hat \theta}$.

By niceness, WLOG there is a fixed $r > 0$ such that each
$h_\alpha$ is a translation on each $B( d_\alpha^i, r)$;
so $h_\alpha(x) = x + e_\alpha^i - d_\alpha^i $ whenever
$\| x - d_\alpha^i \| \le r$; hence also
$h\iv_\alpha(y) = y + d_\alpha^i - e_\alpha^i $ whenever
$\| y - e_\alpha^i \| \le r$.
We choose our $r$ small enough so that also
$ \|d_\alpha^i - d_\alpha^j\| \gg r$ and
$ \|e_\alpha^i - e_\alpha^j\| \gg r$ whenever $i \ne j$.
WLOG,  the $\sigma_\alpha$ are close to some common
condensation point $\{ (d^0, e^0 ), \ldots, (d^{t-1} , e^{t-1} ) \}$,
so that $\| d_\alpha^i - d^i \| \ll r$ and $\| e_\alpha^i - e^i \| \ll r$,
and hence also $ \|d^i - d^j\| \gg r$ and
$ \|e^i - e^j\| \gg r$ whenever $i \ne j$.

Also, WLOG the $h_\alpha$ are sufficiently close to each other
that Lemma \ref{lemma-average} applies to show that each
$(h_\alpha + h_\beta)/2 \in \FF_{\hat \theta}$.

After a bit more thinning, we apply Lemma \ref{lemma-elem-ccc} to fix
$\alpha\ne \beta$ such that
$\sigma_\alpha$ and $\sigma_\beta$ are compatible in 
$\PP_0^{\hat \theta}$.  Then 
$\sigma := \sigma_\alpha \cup \sigma_\beta \in \PP_0^{\hat \theta}$.
We now construct a $q \in P^\theta$ with $q \le p_\alpha$ and $q \le p_\beta$.
Let $\sigma_q = \sigma$.  Let $\hat h = (h_\alpha + h_\beta)/2$.
Although $\hat h \in \FF_{\hat \theta}$, we cannot let
$\hat h_p = \hat h$ because $\hat h$ need not extend $\sigma$; but it is
``close enough'' to $\sigma$ that we may vary it slightly
to obtain our $h_q \supset \sigma$ with $h_q \in \FF_\theta$.
Finally, we make sure that our $r$ was chosen to be small enough
that the argument of Lemma \ref{lemma-smallchange} can be applied to choose
$\varkappa_q$ and $\Ups_q$.

The hardest part of the argument is modifying $\hat h$ to obtain $h_q$.
We shall have $h_q(x) = \hat h(x)$ unless $x$ is near some
$d_\alpha^i, d_\beta^i$.  More specifically, let
$\hat d^i = (d_\alpha^i + d_\beta^i)/2 $ and
$\hat e^i = (e_\alpha^i + e_\beta^i)/2 $.  Using
$\| d_\alpha^i - d_\beta^i \| \ll r$, we have
\[
\hat h(x) = 
 (x + e_\alpha^i - d_\alpha^i  + x + e_\beta^i - d_\beta^i)/2 =
x + \hat e^i - \hat d^i
\]
when $\|x - d_\beta^i\| \le r/2$, and likewise
$\hat h\iv( y) = y + \hat d^i - \hat e^i$ when $\|y - e_\beta^i\| \le r/2$.
In particular, $\hat h(\hat d^i) =  \hat e^i$.
Then we shall have $h_q(x) = \hat h(x)$ unless 
$\|x - \hat d^i\| \le r/2$ for some $i$,
so the changes are only within the various $B(\hat d^i, r/2)$.
We need to make sure that we can make these changes
without bringing $\tw(h_q)$ above $\theta$.
Using $ \|d^i - d^j\| \gg r$, the changes to $\hat h$ inside the
various $B(\hat d^i, r/2)$ will not interfere with each other.

Focusing on one $i$: if $d_\alpha^i = d_\beta^i$, then 
let $h_q \res B(\hat d^i, r/2) = \hat h \res B(\hat d^i, r/2)$.
Now, assume that $d_\alpha^i \ne d_\beta^i$ and hence
$e_\alpha^i \ne e_\beta^i$;
we need to get $h_q( d_\alpha^i) = e_\alpha^i $
and $h_q( d_\beta^i) = e_\beta^i $.  
Since $\hat h(\hat d^i) = \hat e^i$,
we can temporarily change coordinates in the domain and
range and assume that $\hat d^i = \hat e^i = \vec 0$,
so that now $\hat h(x) = x$ for $x \in B(\vec 0, r/2)$.
Then, let $d = d_\alpha^i$ and $e = e_\alpha^i$,
so $ d_\beta^i = -d$ and $ e_\beta^i = -e$,
and we need to get $h_q(d) = e$ and $h_q(-d) = -e$.
WLOG $K := \|e\| / \| d \| \ge 1$; otherwise, we can
interchange $d,e$ and $\hat h, \hat h\iv$ in the argument.
We remark that there is no a priori upper bound to $K$ in this argument.

In changing $\hat h$ to $h_q$ within $B(\vec 0, r/2)$ we have two tasks:
\emph{expand} and \emph{rotate}:  That is, we must rotate
$d$ by angle $\angle(d,e)$ so that it points in direction $e$;
note that $\angle(d,e) =
\angle( d_\alpha^i - d_\beta^i,  e_\alpha^i - e_\beta^i) \le
\tw(\sigma) < \hat \theta $.
At the same time, we must expand $d$ by 
a factor of $K$ so that it has length $\| e \|$.

The following lemma involves a pure rotation, without expansion:

\begin{lemma}
\label{lemma-step-2}
Given $\pi/2 \le \hat \theta < \theta < \pi$ and $0 < r_0 < r_1$
with $r_1/r_0 > e^{ 5 / (\theta - \hat\theta)}$,
and given $\vec d , \vec e$ with $0 < \| \vec d \| = \| \vec e \| < r_0$
and $\angle (\vec d , \vec e) < \hat \theta$:

There is an $f \in \FF_\theta$ such that $f(\vec d) = \vec e$ and
$f(\vec x) = A( \| \vec x \|) \, \vec x$ for all
$\vec x$, where  $A : \RRR \to \SO(n)$ is a $C^\infty$ function,
with $A(r) = I$ whenever $r \ge r_1$ and
$A(r) = A(0)$ whenever $r \le r_0$.
\end{lemma}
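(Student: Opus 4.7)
Write $\alpha = \angle(\vec d, \vec e) \in [0, \hat\theta)$, and (if $\alpha > 0$) let $\Pi$ be the plane spanned by $\vec d, \vec e$. Let $R \in \SO(n)$ be the rotation that is the identity on $\Pi^\perp$ and rotates $\Pi$ by angle $\alpha$ in the direction sending $\vec d/\|\vec d\|$ to $\vec e/\|\vec e\|$; thus $R \vec d = \vec e$. For $t \in [0,1]$, let $R_t \in \SO(n)$ denote the rotation in $\Pi$ by angle $t\alpha$ in the same direction, so $R_0 = I$, $R_1 = R$, and $\tw(R_t) = t\alpha < \hat\theta$. The plan is to take $A(r) = R_{\varphi(r)}$, where $\varphi$ smoothly interpolates from $1$ near $0$ down to $0$ past $r_1$, and to set $f(\vec x) = A(\|\vec x\|)\vec x$.

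Pick a rational $\zeta$ satisfying $(\theta - \hat\theta)/5 < \zeta < 2(\theta - \hat\theta)/(\pi \hat\theta)$; such $\zeta$ exists because $\hat\theta < \pi$ forces $2/(\pi \hat\theta) > 2/\pi^2 > 1/5$ (using the marginal inequality $\pi^2 < 10$). Apply Lemma \ref{lemma-smooth-log} with parameters $r_0, r_1, \zeta$; its hypothesis $r_0 < r_1 e^{-1/\zeta}$ follows from $r_1/r_0 > e^{5/(\theta-\hat\theta)} > e^{1/\zeta}$. This yields a non-decreasing $C^\infty$ function $\tilde\varphi : \RRR \to [0,1]$ vanishing on $(-\infty, r_0]$, equal to $1$ on $[r_1, \infty)$, and satisfying $\tilde\varphi((1+\sigma)x) - \tilde\varphi(x) \le \zeta\sigma$ for $\sigma, x > 0$. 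Set $\varphi = 1 - \tilde\varphi$, $A(r) = R_{\varphi(r)}$, and $f(\vec x) = A(\|\vec x\|)\vec x$. Then $A : \RRR \to \SO(n)$ is $C^\infty$, with $A \equiv R = A(0)$ on $(-\infty, r_0]$ and $A \equiv I$ on $[r_1, \infty)$; consequently $f$ coincides with the linear map $\vec x \mapsto R\vec x$ on $B(\vec 0, r_0)$ and with the identity outside $B(\vec 0, r_1)$, so $f$ and $f^{-1}(\vec y) = A(\|\vec y\|)^{-1}\vec y$ are $C^\infty$ on $\RRR^n$. Because each $A(r)$ is orthogonal, $\|f(\vec x)\| = \|\vec x\|$ and $f$ acts on each sphere by the rotation $A(r)$, making $f$ a bijection; together with $f(\vec x) = \vec x$ for $\|\vec x\| \ge r_1$, conditions (1)--(3) of Definition \ref{def-FF} hold, and $f(\vec d) = R \vec d = \vec e$.

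The remaining (and main) step is the twist bound $\tw(f) < \theta$, which I obtain from Lemma \ref{lemma-morph} applied with that lemma's ``$\theta$'' set to $\hat\theta$ (valid since $\tw(A(r)) = \alpha\varphi(r) < \hat\theta$) and some $\varepsilon \in (0, \min(\pi/2, 2(\theta-\hat\theta)))$ chosen so that $\hat\theta\zeta < \varepsilon/\pi$; this is possible by the choice of $\zeta$. Since each $A(r) \in \SO(n)$ has $\|A(r)^{-1}\| = 1$, Lemma \ref{lemma-morph} applies with $M = 1$. The elementary estimate $\|R_{\beta_1} - R_{\beta_2}\| \le |\beta_1 - \beta_2|$ (which holds in $\RRR^n$ because $R_{\beta_1} - R_{\beta_2} = R_{\beta_2}(R_{\beta_1 - \beta_2} - I)$ vanishes on $\Pi^\perp$ and has operator norm $2|\sin((\beta_1-\beta_2)/2)|$ on $\Pi$), together with $|\varphi((1+\sigma)r) - \varphi(r)| \le \zeta\sigma$, yields
\[
\|A((1+\sigma)r) - A(r)\| \le \alpha\zeta\sigma \le \hat\theta\zeta\sigma < \varepsilon\sigma/\pi,
\]
which is exactly condition $(\ast)$ of Lemma \ref{lemma-morph}. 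That lemma then delivers $\tw(f) < \hat\theta + \varepsilon/2 < \theta$, completing the proof that $f \in \FF_\theta$. The delicate point is the tight numerical margin $1/5 < 2/\pi^2$: it is precisely this inequality that lets the exponent $5/(\theta - \hat\theta)$ in the hypothesis simultaneously accommodate the logarithmic growth rate demanded by Lemma \ref{lemma-smooth-log} and the bound on $\|A((1+\sigma)r) - A(r)\|$ demanded by Lemma \ref{lemma-morph}.
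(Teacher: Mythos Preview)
Your construction is exactly the paper's: set $A(r)$ equal to a planar rotation by angle $\varphi(r)\alpha$ with $\varphi$ supplied by Lemma~\ref{lemma-smooth-log}, and then bound $\tw(f)$. The only difference is packaging---the paper carries out the twist estimate directly via Lemma~\ref{lemma-ball}, while you invoke Lemma~\ref{lemma-morph}; the underlying computation is the same.

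One small point needs tightening. You assert that some $\varepsilon \in (0,\min(\pi/2, 2(\theta-\hat\theta)))$ with $\pi\hat\theta\zeta < \varepsilon$ exists ``by the choice of $\zeta$'', but your constraint $\zeta < 2(\theta-\hat\theta)/(\pi\hat\theta)$ only gives $\pi\hat\theta\zeta < 2(\theta-\hat\theta)$, not $\pi\hat\theta\zeta < \pi/2$; when $\theta-\hat\theta > \pi/4$ and $\zeta$ is taken near the top of your interval, no such $\varepsilon$ exists. The fix is painless: either add the further restriction $\zeta < 1/(2\hat\theta)$, which is compatible with $\zeta > (\theta-\hat\theta)/5$ since $\hat\theta(\theta-\hat\theta) < (\pi/2)(\pi/2) = \pi^2/4 < 5/2$; or observe that the hypothesis $\varepsilon < \pi/2$ in Lemma~\ref{lemma-morph} is used only to derive the Lipschitz bound~(\dag) (hence injectivity), which you have already established directly from orthogonality, while the twist conclusion needs only $\varepsilon < \pi$. (Minor aside: in your parenthetical, since $R_t$ rotates by $t\alpha$, the operator norm is $2|\sin((\beta_1-\beta_2)\alpha/2)| \le |\beta_1-\beta_2|\alpha$; your final bound $\alpha\zeta\sigma$ is correct regardless.)
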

\begin{proof}
Let $\varrho = \angle (\vec d , \vec e) < \hat \theta$.
We may assume that our coordinates are chosen so that
$\vec d, \vec e$ are in the $x_1, x_2$ plane, with $\vec e$
obtained by rotating by $\varrho$ in the positive direction.
Then $A(r) = R_{\psi(r)}$, where $R_\alpha$ is just rotation by $\alpha$
in the $x_1, x_2$ plane, and $\psi \in C^\infty(\RRR, [0, \varrho])$
is a non-increasing function with
$\psi(r) = \varrho$ when $r \le r_0$ and
$\psi(r) = 0$ when $r \ge r_1$.   Then we are done if we show
that we can choose $\psi$ so that $f \in \FF_\theta$.

Let $\zeta = \frac 2 \pi (\theta - \hat\theta) / \varrho$.
Let $\psi = \varrho \psi_0$, where
$\psi_0  \in C^\infty(\RRR, [0, 1])$ is chosen so that
$  \psi_0(r) - \psi_0( (1 + \sigma) r)  \le \zeta \sigma$
whenever $\sigma, r > 0$.  This is possible by
Lemma \ref{lemma-smooth-log} because $r_1 / r_0 > e^{1/\zeta}$.
Now 
$\psi(r) -\psi( (1 + \sigma) r) \le \frac 2 \pi (\theta - \hat\theta) \sigma$
whenever $\sigma, r > 0$.  

To prove that $\tw(f) < \theta$, we fix $\vec x_0 \ne  \vec x_1$,
with $\vec y_i = f(\vec x_i)$, and
show that
$\gamma := \angle ( \vec x_1 - \vec x_0, \vec y_1 - \vec y_0 ) \le
\varrho + (\theta - \hat\theta) $.
If $\| \vec x_0\| = \| \vec x_1\| $
or $\| \vec x_0\| = 0$ then $\gamma \le \varrho$, so
we may assume that
$0 < r = \| \vec x_0\| < (1 + \sigma) r =  \| \vec x_1\| $.
Then $\vec y_0 = R_{\psi(r)} \vec x_0$ and
$\vec y_1 = R_{\psi( (1 + \sigma)r)} \vec x_1$.  Let
$\vec y_0^* = R_{\psi( (1 + \sigma)r)} \vec x_0$.
Then $\angle ( \vec x_1 - \vec x_0, \vec y_1 - \vec y_0^* ) = 
\psi( (1 + \sigma)r) \le \varrho $.
Also, $\|\vec y_1 - \vec y_0^*\| = \sigma r$ and
$\|\vec y_0 - \vec y_0^*\| \le r [ \psi( (1 + \sigma) r) - \psi(r) ]$,
so
$\|\vec y_0 - \vec y_0^*\| / \|\vec y_1 - \vec y_0^*\|\le
\frac 2 \pi (\theta - \hat\theta) $.
Then we use Lemma \ref{lemma-ball} to conclude that
\[
\beta := 
\angle( \vec y_1 - \vec y_0^* , \vec y_1 - \vec y_0) \le
\theta - \hat\theta \ \ ,
\]
and hence that $\gamma \le \varrho + (\theta - \hat\theta) $.
Here, the ``distance'' is $T = \|\vec y_1 - \vec y_0^*\|  \ge \sigma r$ and
the two radii are $0$ and 
$\|\vec y_0 - \vec y_0^*\| \le r [ \psi( (1 + \sigma) r) - \psi(r) ] \le
r[ \frac 2 \pi (\theta - \hat\theta) \sigma ] $, so
$\beta \le \pi \cdot r[ \frac 2 \pi (\theta - \hat\theta) \sigma ] \div
2 \sigma r = \theta - \hat\theta$.
\end{proof}

We next consider the twist of a pure expansion, without rotation:

\begin{lemma}
\label{lemma-step-1}
Assume that $f(\vec x) = \nu( \| \vec x | ) \vec x$,
where $\nu : [0,\infty) \to [0,\infty) $
and the map $r \mapsto \nu(r) r$ is strictly increasing.
Then $\tw(f) \le \pi/2$.
\end{lemma}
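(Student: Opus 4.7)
The plan is to verify $\tw(f) \le \pi/2$ directly from the definition, which by the cosine formula is equivalent to showing that
\[
(\vec x_1 - \vec x_0) \cdot (f(\vec x_1) - f(\vec x_0)) \ge 0
\]
for all pairs $\vec x_0 \ne \vec x_1$ in $\RRR^n$ with $f(\vec x_0) \ne f(\vec x_1)$. Setting $r_i = \|\vec x_i\|$ and substituting $f(\vec x_i) = \nu(r_i)\vec x_i$, I would expand the dot product to
\[
(\vec x_1 - \vec x_0) \cdot (f(\vec x_1) - f(\vec x_0))
= \nu(r_1) r_1^2 + \nu(r_0) r_0^2 - (\nu(r_0) + \nu(r_1))(\vec x_0 \cdot \vec x_1).
\]

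Next, since $\nu \ge 0$ and $\vec x_0 \cdot \vec x_1 \le r_0 r_1$ by Cauchy--Schwarz, the subtracted term is bounded above by $(\nu(r_0) + \nu(r_1)) r_0 r_1$, so the right-hand side is at least
\[
\nu(r_1) r_1^2 + \nu(r_0) r_0^2 - (\nu(r_0) + \nu(r_1)) r_0 r_1 = (r_1 - r_0)\bigl(\nu(r_1) r_1 - \nu(r_0) r_0\bigr).
\]
Without loss of generality $r_0 \le r_1$, so the first factor is $\ge 0$, and by the hypothesis that $r \mapsto \nu(r) r$ is strictly increasing the second factor is also $\ge 0$. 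Hence the dot product is nonnegative, as required.

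There is no serious obstacle here: the lemma is a short algebraic identity once one notices that the only nontrivial estimate needed is Cauchy--Schwarz on $\vec x_0 \cdot \vec x_1$, which reduces the $n$--dimensional claim to the one-variable monotonicity of $r \mapsto \nu(r) r$ that is assumed.
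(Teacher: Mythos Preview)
Your proof is correct and is genuinely different from the paper's argument. The paper first disposes of the trivial cases $\|\vec x_0\| = \|\vec x_1\|$ and $\vec x_0 = \vec 0$, then reduces to the two-dimensional subspace spanned by $\vec x_0,\vec x_1$, identifies it with $\CCC$, and argues that after suitable normalization both displacement vectors $\vec x_1 - \vec x_0$ and $f(\vec x_1) - f(\vec x_0)$ lie in the same open quadrant of $\CCC$ (using that $r/s < 1$ and, from the monotonicity hypothesis, $\nu(r)r/\nu(s)s < 1$), so the angle between them is below $\pi/2$.

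Your dot-product computation is cleaner: it needs no case split and no reduction to a plane, and the factorization $(r_1 - r_0)(\nu(r_1)r_1 - \nu(r_0)r_0)$ makes the role of the monotonicity of $r \mapsto \nu(r)r$ completely transparent. The paper's complex-number picture, by contrast, offers a geometric reason the bound is exactly $\pi/2$ --- both displacements land in the half-plane $\{\mathrm{Re}\,z > 0\}$ --- but at the cost of a longer and more coordinate-dependent argument.
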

\begin{proof}
We prove that
$\gamma := \angle ( \vec x_1 - \vec x_0,
\nu( \| \vec x_1 \| ) \vec x_1 -\nu( \| \vec x_0 \| )  \vec x_0 ) \le \pi/2$
whenever $\vec x_0 \ne  \vec x_1$.
Since $\gamma = 0$ when 
$\| \vec x_0\| = \| \vec x_1\| $ or $\| \vec x_0\| = 0$,
so we may assume that
$0 < r = \| \vec x_0\| <  s =  \| \vec x_1\| $.
Now, we may work entirely in the plane of $\vec x_0, \vec x_1$,
which we identify with $\CCC$, and we may assume that $\vec x_1$ 
is on the positive $x$-axis.  We can now write
$\vec x_0 = r e^{i \delta}$ and $\vec x_1 = s$.  Then
$\nu( \| \vec x_0 \| )  \vec x_0 = r' e^{i \delta}$ and
$\nu( \| \vec x_1 \| )  \vec x_1 = s'$, where $r < r'$ and $s < s'$.
Then
\[
\gamma = \angle( s - r e^{i \delta}, s'- r'e^{i \delta}) =
\angle( 1 - u e^{i \delta} , 1 - v e^{i \delta}) \ \ ,
\]
where $u = r/s < 1$ and $v = r'/s' < 1$.
Then $\gamma < \pi/2$ because both
$ 1 - u e^{i \delta}$ and $1 - v e^{i \delta}$ lie in the same quadrant:
namely quadrant I if $\delta \in (\pi, 2\pi)$ and 
IV if $\delta \in (0, \pi)$.
If $\delta = 0$ or $\delta = \pi$, then $\gamma = 0$.
\end{proof}

Putting these two lemmas together:

\begin{lemma}
\label{lemma-step-1-2}
Given $\pi/2 \le \hat \theta < \theta < \pi$ and
$f \in \FF_{\hat\theta}$ and
$0 < r_0 < r_4$ with
$r_4/r_0 > e^{ 5 / (\theta - \hat\theta)}[2 \pi/ (\theta - \hat\theta)]$
and $f(\vec x) = \vec x$ whenever $\| \vec x\|  \le r_4$,
and given $\vec d , \vec e$ with $0 < \| \vec d \| , \| \vec e \| < r_0$
and $\angle (\vec d , \vec e) < \hat \theta$:

There is a $g \in \FF_\theta$ such that $g(\vec d) = \vec e$ and
$g(\vec x) = f(\vec x)$ whenever $\|\vec x \| \ge r_4$ and
$g(\vec x) = \nu(x) A( \| \vec x \|) \, \vec x$
whenever $\| \vec x \| \le r_4$, where
$A : \RRR \to \SO(n)$ and
$\nu : [0,\infty) \to [0,\infty) $ are $C^\infty$ functions,
and the map $r \mapsto \nu(r) r$ is strictly increasing.
\end{lemma}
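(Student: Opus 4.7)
The plan is to construct $g$ as the composition $\Phi \circ R$, where $R$ is a pure rotation from Lemma \ref{lemma-step-2} and $\Phi$ is a pure radial expansion from Lemma \ref{lemma-step-1}, arranged at nested radial scales so the rotation occurs in an inner spherical shell and the expansion in an outer shell. Since $R$ preserves norms ($\|R(\vec x)\| = \|\vec x\|$), we get $\Phi(R(\vec x)) = \nu(\|\vec x\|)A(\|\vec x\|)\vec x$, the required form. Writing $r_d = \|\vec d\|$, $r_e = \|\vec e\|$, $K = r_e/r_d$, and assuming WLOG $K\ge 1$ (else swap $\vec d, \vec e$ and pass to the inverse, as in the text before the lemma), I will choose radii $r_d < r_1 < r_3 \le r_4$ with $r_1 = r_d \cdot E_0$ for some $E_0 > e^{5/(\theta - \hat\theta')}$ where $\hat\theta' \in (\hat\theta,\theta)$ is chosen so that Lemma \ref{lemma-step-2} applies, and $r_3 > K r_1 = r_e E_0$ so that the radial map $\mu(r) := \nu(r)r$ can be made strictly increasing. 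The hypothesis $r_4/r_0 > e^{5/(\theta-\hat\theta)} \cdot 2\pi/(\theta-\hat\theta) = E\cdot T$ splits cleanly: the factor $E$ pays for the rotation ratio, while the extra factor $T$ gives the buffer needed for the cross-region twist estimates. Note $r_3 \le r_4$ holds because $r_3 \approx r_e E < r_0 E < r_4/T < r_4$.

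I would then apply Lemma \ref{lemma-step-2} to the vectors $\vec d$ and $(r_d/r_e)\vec e$ (both of norm $r_d$) to produce $R(\vec x) = A(\|\vec x\|)\vec x$ with $A(r) = A_0$ for $r \le r_d$, $A(r)=I$ for $r \ge r_1$, and $A_0$ the rotation in the $\vec d,\vec e$-plane by $\varrho := \angle(\vec d, \vec e) < \hat\theta$; this gives $R \in \FF_\theta$ and $R(\vec d) = (r_d/r_e)\vec e$. Next, using Lemma \ref{lemma-smooth-log} twice (once to build a smooth non-increasing transition between levels $K$ and $1$, once to ensure $\mu(r)=\nu(r)r$ is strictly increasing), I construct $\Phi(\vec x) = \nu(\|\vec x\|)\vec x$ with $\nu(r) = K$ for $r \le r_1$, $\nu(r) = 1$ for $r \ge r_3$. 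Lemma \ref{lemma-step-1} then gives $\tw(\Phi) \le \pi/2 < \theta$, so $\Phi \in \FF_\theta$. Composing, $g(\vec x) := \Phi(R(\vec x))$ satisfies $g(\vec d) = \Phi((r_d/r_e)\vec e) = K \cdot (r_d/r_e)\vec e = \vec e$, has the prescribed form, and since $R$ and $\Phi$ are identity beyond radius $r_3$, I can extend by $g(\vec x) = f(\vec x) = \vec x$ on $\|\vec x\| \ge r_4$ without breaking $C^1$-ness.

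The main obstacle is verifying $\tw(g) < \theta$. I case-split on where the pair $\vec x_0, \vec x_1$ falls. When both lie in $B(\vec 0, r_1)$, $g = K\cdot R$ in that ball and scaling does not affect angles, so $\tw < \theta$ directly from Lemma \ref{lemma-step-2}. When both lie in the annulus $r \in [r_1, r_3]$, $g = \Phi$ (pure radial expansion) and $\tw \le \pi/2 < \theta$ by Lemma \ref{lemma-step-1}. When both lie in $r \ge r_3$, $g$ is the identity so the twist is $0$. The cross-region cases are handled by Lemma \ref{lemma-ball}: for $\vec x_0$ in the inner region and $\vec x_1$ outside, the difference $g(\vec x_1) - g(\vec x_0)$ equals $\vec x_1 - KA_0\vec x_0$ up to a correction parallel to $\vec x_1$, and the bound $\pi(r+s)/(2T)$ of Lemma \ref{lemma-ball} becomes $\lesssim \pi/(E_0 - 1)$. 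This is strictly less than $\theta - \hat\theta$ precisely because $E_0 > E = e^{5/(\theta-\hat\theta)}$ dominates $\pi/(\theta-\hat\theta)$ for all $\theta - \hat\theta \in (0, \pi/2)$.

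The most delicate subcase is when $\vec x_0 \in B(\vec 0, r_1)$ (so $g(\vec x_0) = KA(\|\vec x_0\|)\vec x_0$ has norm up to $r_e$) and $\vec x_1$ falls in the middle of the expansion annulus where $\nu(\|\vec x_1\|)$ is intermediate between $1$ and $K$; here I would decompose $g(\vec x_1) - g(\vec x_0)$ as the ``boundary'' part $(\vec x_1 - KA_0\vec x_0)$ plus a radial perturbation $(\nu - 1)\vec x_1$ aligned with $\vec x_1$, apply Lemma \ref{lemma-ball} to each and combine via the triangle inequality for angles, exploiting that the direction-aligned part only shifts $g(\vec x_1) - g(\vec x_0)$ along $\vec x_1$. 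The buffer factor $T$ in the hypothesis is precisely what ensures this combined estimate stays below $\theta$.
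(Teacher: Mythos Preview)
Your overall plan --- rotate first, then expand radially --- is exactly the paper's, and since $R$ is norm-preserving your composition $\Phi\circ R$ and the paper's direct formula $\nu(\|\vec x\|)A(\|\vec x\|)\vec x$ are literally the same function.  The difference is in the radial layout, and that difference breaks your twist estimate.

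You place the rotation shell $[r_d,r_1]$ and the expansion shell $[r_1,r_3]$ with a shared boundary.  Your case analysis then never cleanly covers the situation $r_d<\|\vec x_0\|<r_1<\|\vec x_1\|<r_3$: in your ``most delicate subcase'' you write $g(\vec x_0)=KA_0\vec x_0$ and claim $\|g(\vec x_0)\|\le r_e$, but that is only true when $\|\vec x_0\|\le r_d$; for $\|\vec x_0\|$ near $r_1$ you have $A(\|\vec x_0\|)\ne A_0$ and $\|g(\vec x_0\|)=K\|\vec x_0\|$ close to $Kr_1$, not $r_e$.  In this regime both ratios $\|\vec x_0\|/\|\vec x_1\|$ and $\|g(\vec x_0)\|/\|g(\vec x_1)\|$ can be arbitrarily close to $1$, so Lemma~\ref{lemma-ball} gives nothing, and the na\"ive composition bound $\tw(R)+\tw(\Phi)<\theta+\pi/2$ is useless.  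Your hand-wave about ``the buffer factor $T$'' does not address this: you have already spent all of $r_4/r_0$ on $E_0$ and on $r_4/r_3$.

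The paper's cure is to insert a genuine gap.  It splits the hypothesis $r_4/r_0>e^{5/(\theta-\hat\theta)}\cdot 2\cdot \pi/(\theta-\hat\theta)$ into \emph{three} factors: $r_1/r_0>e^{5/(\theta-\hat\theta)}$ for the rotation, $r_2/r_1>2$ for a separating annulus, and $r_4/r_3>\pi/(\theta-\hat\theta)$ for patching to $f$.  It then sets $s_i=r_i/K$ and runs the rotation on $[s_0,s_1]$ and the expansion on $[s_2,r_3]$.  The point of dividing by $K$ is that it forces $\|g(\vec x_0)\|\le Ks_1=r_1$ whenever $\|\vec x_0\|\le s_1$, while $\|g(\vec x_1)\|\ge Ks_2=r_2$ whenever $\|\vec x_1\|\ge s_2$; thus both the domain ratio $s_1/s_2$ and the range ratio $r_1/r_2$ are below $1/2$, and two applications of Lemma~\ref{lemma-ball} plus $\angle(\vec x_1,g(\vec x_1))=0$ give $\gamma<\pi/2<\theta$ in the cross case.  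Your argument can be repaired by inserting exactly this gap; without it the ``most delicate subcase'' is a real hole.
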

\begin{proof}
If $\| \vec d \|  = \| \vec e \| = 0$ then we can let $g = f$,
and the lemma is symmetric in $f, f\iv$, so WLOG
$0 < \| \vec d \| \le  \| \vec e \| < r_0$.  Let $K = 
\| \vec e \| /  \| \vec d \| \in [1, \infty)$.
We remark that it is important for the ccc proof that we 
are given no upper bound to $K$ in this lemma.

Choose $r_1, r_2, r_3$ with 
$r_0 < r_1 < r_2  < r_3 < r_4$  and
$r_1/r_0 > e^{ 5 / (\theta - \hat\theta)}$ and
$r_2/r_1 > 2$ and
$r_4/r_3  > \pi/ (\theta - \hat\theta)$.
Define $s_i = r_i / K$ for $i = 0,1,2$.

Choose $\nu$ so that $\nu(r) = K$ for $r \le s_2$ and
$\nu(r) = 1$ for $r \ge r_3$.  We can make 
$r \mapsto \nu(r) r$ strictly increasing because
$K \cdot  s_2  = r_2 < 1 \cdot r_3$.

As in the proof of Lemma \ref{lemma-step-2},
let $A(r) = R_{\psi(r)}$, where $R_\alpha$ is rotation by angle $\alpha$,
and $\psi \in C^\infty(\RRR, [0, \varrho])$,
where $\varrho = \angle (\vec d , \vec e) < \hat \theta$.
Again, $\psi$ is a non-increasing function; but now 
$\psi(r) = \varrho$ when $r \le s_0$ and
$\psi(r) = 0$ when $r \ge s_1$, and
$\psi(r) -\psi( (1 + \sigma) r) \le \frac 2 \pi (\theta - \hat\theta) \sigma$
whenever $\sigma, r > 0$.  
There is such a $\psi$ by Lemma \ref{lemma-smooth-log} because
$s_1 / s_0 = r_1 / r_0   > e^{\pi \varrho / ( 2  (\theta - \hat\theta) )  }$.

This defines $g$.  To prove that $g \in \FF_\theta$, 
we fix $\vec x_0 \ne  \vec x_1$,
with $\vec y_i = g(\vec x_i)$, and
show that
$\gamma := \angle ( \vec x_1 - \vec x_0, \vec y_1 - \vec y_0 ) < \theta$.
We may assume that $\|\vec x_0\| \le \|\vec x_1\|$, and we consider various
cases for the values of $\|\vec x_0\| , \|\vec x_1\|$:

If $\|\vec x_1\| \ge r_4$ and $\|\vec x_0\| \ge r_3$, then
$\gamma < \hat \theta$ because $\tw(f) < \hat \theta$ and
each $\vec y_i = g(\vec x_i) = f(\vec x_i)$.

If $\|\vec x_1\| \ge r_4$ and $\|\vec x_0\| \le r_3$:
Then $\|\vec y_1\| \ge r_4$ and $\|\vec y_0\| \le r_3$ 
and also
$\angle ( \vec x_1 - \vec 0, \vec y_1 - \vec 0 ) =
\angle ( \vec x_1 - \vec 0, f(\vec x_1)  - f(\vec 0) ) < \hat \theta$ because
$\tw(f) < \hat \theta$.
We shall show that
\[
\angle ( \vec x_1 - \vec 0, \vec x_1 - \vec x_0 ) < ( \theta - \hat \theta)/2
\quad \text{and} \quad
\angle ( \vec y_1 - \vec 0, \vec y_1 - \vec y_0 ) < ( \theta - \hat \theta)/2
\ \ .
\]
Applying Lemma \ref{lemma-ball},
the ``distance'' $T$ is either
$\|  \vec x_1 - \vec 0 \|$ or $\|  \vec y_1 - \vec 0 \|$,
so $T \ge r_4$, and
the two radii are $0$ and one of $\|\vec x_0\|$,$\|\vec y_0 \|$,
so each of the two angles is bounded by
$\pi \cdot r_3 \div 2 r_4 <  ( \theta - \hat \theta)/2$ because
$r_3/r_4  <  (\theta - \hat\theta) / \pi$.

In the remaining cases, $\|\vec x_0\| \le \|\vec x_1\| \le r_4$.

If $s_1 \le \|\vec x_0\| \le \|\vec x_1\| \le r_4$, then
$\gamma \le \pi/2 < \theta$ by Lemma \ref{lemma-step-1}.

If $0 \le \|\vec x_0\| \le \|\vec x_1\| \le s_2$, then
$\gamma  < \theta$ by Lemma \ref{lemma-step-2}.

All that remains is the case that $0 \le \|\vec x_0\| \le s_1$
and $s_2  \le \|\vec x_1\| \le r_4$:
Then $\angle ( \vec x_1 - \vec 0, \vec y_1 - \vec 0 ) = 0$.
Also, $\angle ( \vec x_1 - \vec 0 , \vec x_1 - \vec x_0)  \le
(\pi/2) (s_1/s_2)$ and
$\angle ( \vec y_1 - \vec 0 , \vec y_1 - \vec 0) \le
(\pi/2) (r_1/r_2)$, so $\gamma < \pi/2$
because $s_2/s_1 = r_2/r_1 > 2$.

Note that our argument requires no lower bound to $r_3/r_2$;
we just need $r_2 < r_3$.  If $r_2 \approx r_3$ then 
$\| J_{f\iv}(y) \| \gg 1$ for some $y$ with $r_2 < y < r_3$,
but our proof does not maintain any upper bound on
$\| J_f\|$ and $\| J_{f\iv} \|$ anyway.
\end{proof}

Before we choose $\varkappa_q$ and $\Ups_q$, we need
some more preliminaries:

\begin{definition}
\label{def-dist-Delta}
For $f,g \in \FF_\theta$, let \\
$\Delta(f,g) = 
\max( \| f - g \|  , \| J_f - J_g \| , 
\| f\iv - g\iv \|  , \| J_{f\iv} - J_{g\iv} \| )$.
Let 
$B_\Delta(f,\varepsilon) = \{g \in \FF_\theta :
d_\Delta( f , g ) < \varepsilon\}$, and
$ B(f,\varepsilon) =  B_d(f,\varepsilon) =
\{g \in \FF_\theta : d( f , g ) < \varepsilon\}$.
\end{definition}

Note that $\Delta(f,g) \ge d(f,g)$ (see Definition \ref{def-dist}),
and $\Delta(f,g) = \Delta(f\iv, g\iv)$,
and $d(f,g) = d(f\iv, g\iv)$
Also, both  $(\FF_\theta, d)$ and $(\FF_\theta, \Delta)$
are separable metric spaces, and neither is complete.
Although $\Delta$ might seem more ``natural'' then $d$ as
a metric on our space $\FF_\theta$ of $C^1$ functions,
our generic function $f$ cannot be $C^1$, and is a limit of
$\langle h_p : p \in G \rangle$ only with respect to $d$, not $\Delta$.

\begin{lemma}
\label{lemma-continuity}
Give $\FF_\theta$ the topology induced by $\Delta$,
and $\MM^n$ the topology induced by the operator norm.
Then the map $f \mapsto f\iv$ is a homeomorphism (and isometry) of $\FF_\theta$,
and the maps $f,x \mapsto J_f(x)$ and $f,x \mapsto J_{f\iv}(x)$ are
continuous $\FF_\theta \times \RRR^n \to \MM^n$.

If $0 < a < b < \infty$, let $\MM^n_{a,b} =
\{Y \in \MM^n : a \le \det Y \le b \}$.
This  $\MM^n_{a,b}$ is closed in $\MM^n$ but not compact.
But, the map
$Y \mapsto Y\iv$ is a uniformly continuous bijection from
$\MM^n_{a,b}$ onto $\MM^n_{1/b,1/a}$.
\end{lemma}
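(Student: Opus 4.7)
The plan is to verify the four assertions in the lemma separately. For the isometry claim, observe that $\Delta(f,g)$ is the maximum of four quantities, and the operation $(f,g) \mapsto (f\iv, g\iv)$ permutes the pair $\{\|f-g\|, \|f\iv - g\iv\|\}$ and independently the pair $\{\|J_f - J_g\|, \|J_{f\iv} - J_{g\iv}\|\}$; hence $\Delta(f,g) = \Delta(f\iv, g\iv)$, which gives both the isometry and the homeomorphism assertions.

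For continuity of $(f, \vec x) \mapsto J_f(\vec x)$, fix a base point $(f_0, \vec x_0)$. By clause (3) of Definition \ref{def-FF}, $f_0$ agrees with a translation outside some ball, so $J_{f_0}$ equals $I$ there; combined with clause (2), $J_{f_0}$ is continuous on $\RRR^n$ and constant outside a compact set, hence uniformly continuous. For $(f, \vec x)$ near $(f_0, \vec x_0)$, split
\[
\|J_f(\vec x) - J_{f_0}(\vec x_0)\| \le \|J_f(\vec x) - J_{f_0}(\vec x)\| + \|J_{f_0}(\vec x) - J_{f_0}(\vec x_0)\|:
\]
the first summand is dominated by $\|J_f - J_{f_0}\| \le \Delta(f, f_0)$ (Definition \ref{def-dist-Delta}), and the second by the modulus of continuity of $J_{f_0}$. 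The map $(f, \vec x) \mapsto J_{f\iv}(\vec x)$ is handled identically, using the $\|J_{f\iv} - J_{f_0\iv}\|$ summand in $\Delta$ together with the isometry just established.

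For the facts about $\MM^n_{a,b}$: closedness is immediate, since this is the preimage of $[a,b]$ under the continuous map $\det$. Non-compactness follows from the sequence $Y_k = \mathrm{diag}(k, 1/k, 1, \ldots, 1)$, which satisfies $\det Y_k = 1 \in [a,b]$ (rescale the sequence otherwise) but $\|Y_k\| = k \to \infty$. That $Y \mapsto Y\iv$ is a bijection onto $\MM^n_{1/b,1/a}$ follows because every $Y \in \MM^n_{a,b}$ is invertible with $\det(Y\iv) = 1/\det Y$, and $Z \mapsto Z\iv$ provides a two-sided inverse.

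The substantive point is the uniform continuity of inversion. The naive identity $Y\iv - Y'\iv = Y\iv (Y' - Y) Y'\iv$ only yields a local Lipschitz constant of order $\|Y\iv\|^2$, which is \emph{not} uniformly bounded on $\MM^n_{a,b}$ (the $Y_k$ above already have $\|Y_k\iv\| = k$). The plan is therefore to use the Cramer/adjugate representation: each entry of $Y\iv$ is a cofactor polynomial of degree $n-1$ in the entries of $Y$ divided by $\det Y \in [a,b]$, giving
\[
(Y\iv)_{ij} - (Y'\iv)_{ij} = \frac{M_{ji}(Y) \det Y' - M_{ji}(Y') \det Y}{\det Y \cdot \det Y'}
\]
with denominator bounded below by $a^2$. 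The main obstacle I expect is extracting uniform control of the numerator as a function of $\|Y - Y'\|$ alone; the cofactor polynomials have gradients that grow with $\|Y\|$, so this step must exploit the specific algebraic cancellation provided by the determinant constraint rather than any naive norm bound.
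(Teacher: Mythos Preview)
Your approach matches the paper's exactly: the paper's entire proof is the one-line remark that $Y\iv$ is given by the adjugate formula (cofactor polynomials divided by $\det Y$), which is precisely the Cramer representation you wrote down. So on the parts of the lemma you did complete, there is nothing to add.

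Your hesitation about the numerator, however, is not an obstacle awaiting a clever trick---it is a symptom of the fact that the uniform-continuity claim is \emph{false} as stated. For $n=2$ and any $a<1/2<1<b$, take
\[
Y_k=\begin{pmatrix}1&k\\0&1\end{pmatrix},\qquad
Y'_k=\begin{pmatrix}1&k\\ \tfrac{1}{2k}&1\end{pmatrix}.
\]
Then $\det Y_k=1$, $\det Y'_k=1/2$, so both lie in $\MM^2_{a,b}$, and $\|Y_k-Y'_k\|=\tfrac{1}{2k}\to 0$. But
\[
Y_k\iv-(Y'_k)\iv=\begin{pmatrix}1&-k\\0&1\end{pmatrix}-\begin{pmatrix}2&-2k\\-1/k&2\end{pmatrix}=\begin{pmatrix}-1&k\\1/k&-1\end{pmatrix},
\]
which has operator norm at least $k$. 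So there is no cancellation to exploit; the adjugate argument the paper invokes does not yield uniform continuity on the unbounded set $\MM^n_{a,b}$.

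This does not damage the paper's main results. The only place this clause is used is in the proof of Lemma~\ref{lemma-bound-inv}, where $f\in\FF_\theta$ is fixed in advance; the matrices $J_f(z)$ then range over a compact set (since $J_f\equiv I$ outside a ball), and the perturbed matrices $J_k(z)$ stay within a fixed neighborhood of it. On any \emph{bounded} subset of $\MM^n_{a,b}$ your adjugate computation goes through cleanly, because the cofactor polynomials are then uniformly Lipschitz. So the correct repair is to weaken the statement to uniform continuity on bounded (equivalently, compact) subsets of $\MM^n_{a,b}$; your write-up already contains everything needed for that version.
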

\begin{proof}
For the last statement, use by the standard formula for $Y\iv$ as
a polynomial in the entries of $Y$ divided by $\det(Y)$.
\end{proof}

\begin{definition}
Fix $f \in \FF_\theta$, and let $K \subset (1, \infty)$
be closed in $\RRR$.  Then
$Z^f_K = \{x \in \RRR^n :   \det J_f(x)   \in K \}$.
\end{definition}

Note that $Z^f_K $ is compact because $J_f(x) = I$ outside a bounded set.
Also,
$Z^f_\ell = Z^f_{[\ell - 1, \ell]  }$ ($\ell \ge 3$) and
$W^f_\ell = Z^f_{[\ell,  \infty)  }$ ($\ell \ge 2$)
(see Definition \ref{def-WZ}).

\begin{lemma} 
\label{lemma-basic-Z}
Fix  $K \subset (1, \infty)$ such that $K$ is closed in $\RRR$:

1. For all $\zeta > 0$, there is an open $U \supset K$ such that
$\mu( Z^f_{\overline U} ) \le \mu( Z^f_K ) + \zeta $.

2. $\forall f \in \FF_\theta \ \forall \zeta > 0 \,
\exists \varepsilon > 0 \, \forall g \in \FF_\theta
\; [ \Delta(f,g) < \varepsilon \to  \\
\mu( Z^g_K ) < \mu( Z^f_K) + \zeta \;\wedge\;
\mu( Z^{g\iv}_K ) < \mu( Z^{f\iv}_K) + \zeta]$.
\end{lemma}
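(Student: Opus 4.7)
My plan is to prove Part 1 by thickening $K$ to open neighborhoods and invoking continuity of measure, then reduce Part 2 to Part 1 via a uniform determinant estimate that follows from boundedness of the Jacobians involved.

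For Part 1, I would set $U_n = \{t \in \RRR : \mathrm{dist}(t, K) < 1/n\}$; then $\overline{U_n}$ is the closed $1/n$-neighborhood of $K$, and since $K$ is closed, $\bigcap_n \overline{U_n} = K$, so $Z^f_{\overline{U_n}} \downarrow Z^f_K$. Because $K \subseteq (1,\infty)$ is closed in $\RRR$ we have $\mathrm{dist}(1, K) > 0$, so $1 \notin \overline{U_n}$ for all large $n$; combined with $\det J_f \equiv 1$ outside a bounded set (condition (3) of Definition \ref{def-FF}), each such $Z^f_{\overline{U_n}}$ is bounded, hence compact, and of finite measure. Continuity of $\mu$ from above then gives $\mu(Z^f_{\overline{U_n}}) \downarrow \mu(Z^f_K)$, so some $U = U_n$ works.

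For Part 2, I would first apply Part 1 with $\zeta/2$ to obtain $U \supset K$ with $\mu(Z^f_{\overline U}) \le \mu(Z^f_K) + \zeta/2$, taking $U = U_n$ explicitly so that $\overline U$ contains the closed $\eta$-neighborhood $K_\eta$ of $K$ for $\eta = 1/n$; I would do the same for $f\iv$, producing $V$ and $\eta'$, and set $\eta^* = \min(\eta, \eta')$. It then suffices to find $\varepsilon > 0$ such that $\Delta(f,g) < \varepsilon$ forces $|\det J_g(x) - \det J_f(x)| < \eta^*$ for every $x$, and likewise for the inverses. For then $\det J_g(x) \in K$ implies $\det J_f(x) \in K_{\eta^*} \subseteq \overline U$, whence $Z^g_K \subseteq Z^f_{\overline U}$ and $\mu(Z^g_K) \le \mu(Z^f_{\overline U}) < \mu(Z^f_K) + \zeta$; the inverse inequality is symmetric.

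The main obstacle is this uniform-in-$x$ determinant estimate. The key observation is that $\det : \MM^n \to \RRR$ is a polynomial in the matrix entries, hence Lipschitz on every bounded subset of $\MM^n$. Since $f \in \FF_\theta$ gives $J_f \equiv I$ outside a bounded set, $\|J_f\|$ is finite and $J_f(\RRR^n)$ is contained in a bounded $B \subseteq \MM^n$; whenever $\|J_f - J_g\| < 1$, also $J_g(\RRR^n)$ lies in the slightly larger bounded set $B + \overline{B(0,1)}$. Choosing $\varepsilon$ smaller than $\eta^*$ divided by the Lipschitz constant of $\det$ on this enlarged bounded set yields the required estimate in the forward direction; the bound $\|J_{f\iv} - J_{g\iv}\|$ built into $\Delta$ handles the inverses identically. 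A mild subtlety is that $K$ itself may be unbounded, but this is harmless because $\det J_f$ and $\det J_g$ each take values in a compact subset of $\RRR$, so only a compact portion of $K$ is ever relevant.
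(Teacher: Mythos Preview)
Your proof is correct, and Part~1 is essentially identical to the paper's argument (decreasing closed neighborhoods of $K$ plus continuity of measure from above, using that $Z^f_{\overline U}$ has finite measure once $1 \notin \overline U$).

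For Part~2 your route genuinely differs from the paper's.  You obtain the inclusion $Z^g_K \subseteq Z^f_{\overline U}$ by a direct quantitative estimate: since $\|J_f - J_g\| \le \Delta(f,g)$ and $\det$ is Lipschitz on bounded subsets of $\MM^n$, a small enough $\varepsilon$ forces $|\det J_g(x) - \det J_f(x)| < \eta^*$ uniformly in $x$, and then $\det J_g(x) \in K$ puts $\det J_f(x)$ into the $\eta^*$-thickening $\overline U$.  The paper instead argues by contradiction and compactness: it first localizes to the ball $\{\|x\| \le r\}$ on which $J_f \ne I$, then assumes no $\varepsilon$ works, extracts $g_m \to f$ and $x_m \in Z^{g_m}_K \setminus Z^f_{\overline U}$, passes to a convergent subsequence $x_m \to x$, and derives $\det J_f(x) \in K \setminus U$.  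Your argument is more elementary and yields an explicit $\varepsilon$ in terms of the Lipschitz constant of $\det$ on a fixed bounded set; the paper's sequential argument avoids naming any such constant and is perhaps closer in spirit to the qualitative use made of the lemma in the ccc proof, but neither approach has a real advantage here.
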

\begin{proof}
For (1):  Get open $U_m \supset K$ with all $\overline U_m \subset (1, \infty)$
and $\overline U_m \searrow K$.   Then
$\mu( Z^f_{\overline U_m} ) \searrow \mu( Z^f_K )$ because
the $\mu( Z^f_{\overline U_m} )$ are finite.

For (2):  By symmetry between $f,f\iv$, we need only consider
the ``$\mu( Z^g_K ) < \mu( Z^f_K) + \zeta$'' part of the conjunction.
Note that the ``$<$'' might be \emph{much} less;
for example, $K$ may be a singleton with $\mu( Z^f_K) > 0$;
but there will be $g$ arbitrarily close to $f$ with
$ Z^g_K = \emptyset$.

First fix an open $U$ with
$K \subseteq U  \subseteq \overline U \subseteq (1,\infty)$ and
$\mu( Z^f_{\overline U} ) < \mu( Z^f_K ) + \zeta $.
Then it is sufficient to choose $\varepsilon> 0$ so that
$ \forall g \in \FF_\theta \; [ \Delta(f,g) < \varepsilon \to
Z^g_K  \subseteq  Z^f_{\overline U} ] $.
First fix $r > 0$ such that $J_f(x) = I$ whenever $\| x \| \ge r$.
Then, since $K \subset (1,\infty)$
we can fix $\varepsilon_0 > 0$ such that
$J_g(x) \notin K$ whenever $\| x \| \ge r$ and $\Delta(f,g) < \varepsilon_0$.
Then we shall choose our desired $\varepsilon$
so that $\varepsilon \le \varepsilon_0$.
If there is no such $\varepsilon$, then get a sequence
$g_m \to f$ (wrt $\Delta$) and
$x_m \in  Z^{g_m}_K  \setminus Z^f_{\overline U}$
with all $\Delta(f, g_m) < \varepsilon_0$.
Then all $\| x_m \| < r$, so,
passing to a sub-sequence, we may assume that $x_m \to x$.
Then $\det J_{f}(x_m)   \to \det J_{f}(x)  \notin U$ (since $U$ is open)
and $\det J_{g_m}(x_m)  \to  \det J_{f}(x)  \in K$
(since $K$ is closed), which contradicts $K \subseteq U$.
\end{proof}

Note that $K$ need not be bounded here; in particular,
it could be some $[c, \infty)$ with $c > 1$, so this lemma
applies to the $W^f_\ell$.

$\FF_\theta$ is not closed under $+$, since
for $f,g \in \FF_\theta$, $f + g$ need not even be 1-1.
Of course, $f \in \FF_\theta$ implies $c f \in \FF_\theta$ for any $c > 0$.
Also note the following related to Lemmas
\ref{lemma-small-change} and \ref{lemma-average}.
In both of them, we are starting with an $f \in \FF_\theta$
and we are constructing a new function $k \in \FF_\theta$,
and we easily verify that 
$\| f - k \|$, $\| f\iv - k\iv \|$, and $\| J_f - J_k \|$ are ``small'',
and we want to show that  $\| J_{f\iv} - J_{k\iv} \|$ is ``small'', so that
$\Delta(f,k)$ is small.
Applied to Lemma \ref{lemma-average}, $k = (f + g)/2 $, where
$g$ is ``near to'' $f$.

\begin{lemma}
\label{lemma-bound-inv}
For each $f \in \FF_\theta$,  and each $\varepsilon > 0$,
there is a $\delta \in (0, \varepsilon)$ such that:
For all $k \in \FF_\theta$, 
if $\| f - k \| < \delta$, $\| f\iv - k\iv \| < \delta$,
and $\| J_f - J_k \| < \delta$, then
$\| J_{f\iv} - J_{k\iv} \| < \varepsilon$, and hence
$\Delta(f,k) < \varepsilon$.
\end{lemma}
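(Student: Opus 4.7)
The plan is to exploit the identity $J_{f\iv}(y) = (J_f(f\iv(y)))\iv$, so that controlling $\|J_{f\iv} - J_{k\iv}\|$ reduces to a uniform-continuity statement for matrix inversion applied to matrices near the range of $J_f$.

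Fix $f \in \FF_\theta$ and $\varepsilon > 0$. Since $J_f = I$ outside a bounded set and is continuous, $J_f(\RRR^n)$ is a compact subset of $\MM^n$ on which $\det$ takes values in some $[a,b] \subset (0,\infty)$ with $a \le 1 \le b$; thus $J_f(\RRR^n) \subseteq \MM^n_{a,b}$, and $J_f$ is uniformly continuous. By Lemma~\ref{lemma-continuity}, inversion is uniformly continuous on $\MM^n_{a/2, 2b}$, so I would pick $\eta > 0$ such that $Y_1, Y_2 \in \MM^n_{a/2, 2b}$ with $\|Y_1 - Y_2\| < \eta$ imply $\|Y_1\iv - Y_2\iv\| < \varepsilon/2$.

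Next, choose $\delta \in (0,\varepsilon)$ small enough that (i) $\delta < \eta/4$; (ii) by uniform continuity of $J_f$, $\|x - x'\| < \delta$ implies $\|J_f(x) - J_f(x')\| < \eta/4$; and (iii) any matrix within $\delta$ of some $J_f(x')$ lies in $\MM^n_{a/2, 2b}$ (available because $\det$ is continuous and $J_f(x') \in \MM^n_{a,b}$). Then, given $k \in \FF_\theta$ satisfying the three hypotheses, for each $y \in \RRR^n$ set $x = f\iv(y)$ and $x' = k\iv(y)$, so $\|x - x'\| \le \|f\iv - k\iv\| < \delta$. The triangle inequality yields $\|J_f(x) - J_k(x')\| \le \|J_f(x) - J_f(x')\| + \|J_f(x') - J_k(x')\| < \eta/4 + \delta < \eta$. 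By (iii), both $J_f(x) \in \MM^n_{a,b}$ and $J_k(x')$ lie in $\MM^n_{a/2, 2b}$, so the choice of $\eta$ forces $\|J_{f\iv}(y) - J_{k\iv}(y)\| = \|(J_f(x))\iv - (J_k(x'))\iv\| < \varepsilon/2$. Taking the supremum over $y$ gives $\|J_{f\iv} - J_{k\iv}\| \le \varepsilon/2 < \varepsilon$, and since $\delta < \varepsilon$ the other three quantities making up $\Delta(f,k)$ are automatically $< \varepsilon$, so $\Delta(f,k) < \varepsilon$.

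The main obstacle is simply the careful bookkeeping needed to choose $\delta$ so that it simultaneously controls the displacement $\|f\iv(y) - k\iv(y)\|$, the oscillation of $J_f$ across that displacement, and the determinant drift keeping $J_k(x')$ inside the compact invertibility window $\MM^n_{a/2, 2b}$. All the genuine analytic content is already packaged in the uniform continuity of $J_f$ (from $f$ being $C^1$ and equal to a translation at infinity) and the uniform continuity of matrix inversion on determinant-bounded sets, which is Lemma~\ref{lemma-continuity}; no additional ideas are needed.
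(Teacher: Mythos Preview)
Your proof is correct and follows essentially the same approach as the paper: both use the identity $J_{f\iv}(y)=(J_f(f\iv(y)))\iv$, the uniform continuity of $J_f$ (from $f$ being $C^1$ and eventually a translation), and the uniform continuity of matrix inversion on determinant-bounded sets from Lemma~\ref{lemma-continuity}. The only cosmetic difference is that you apply the triangle inequality to $J_f(x)-J_k(x')$ and then invert once, whereas the paper inverts first and then splits $\|(J_f(x))\iv-(J_k(z))\iv\|$ through the intermediate term $(J_f(z))\iv$; the ingredients and the choice of $\delta$ are otherwise identical.
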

\begin{proof}
To bound $\| J_{f\iv} - J_{k\iv} \|$, fix $y$ and we bound
$\| J_{f\iv}(y) - J_{k\iv}(y) \|$.
Let $f\iv(y) = x$ and $k\iv(y) = z$, so $f(x) = k(z) = y$, and
$\| J_{f\iv}(y) - J_{k\iv}(y) \| = \| (J_f(x))\iv - (J_k(z))\iv \| \le
 \| (J_f(x))\iv - (J_f(z))\iv \| + \| (J_f(z))\iv - (J_k(z))\iv \|$.

For the first summand:  Given $f$, the maps
$x \mapsto J_f(x)$ and $x \mapsto (J_f(x))\iv$ are continuous on 
$\RRR^n$, and hence uniformly continuous
(since $J_f(x) = I$ outside a bounded set),
so choose $\delta > 0$ small enough that
$\|x - z\| < \delta \to \| (J_f(x))\iv - (J_f(z))\iv \| < \varepsilon / 2$.
Note that
$\|x - z\| = \|f\iv(y) - k\iv(y) \| \le \| f\iv - k\iv \| < \delta$.

For the second summand, let $2a$ be the \emph{smallest} value
of $\det(J_f(z))$.  Choosing $\delta$ small enough yields
$\| J_f - J_k \| < \delta \to  \det(J_k(z)) \ge a$.
Using this plus Lemma \ref{lemma-continuity},
we can choose $\delta$ so that $\| J_f - J_k \| < \delta \to 
\| (J_f(z))\iv - (J_k(z))\iv \| > \varepsilon / 2$.
\end{proof}

We remark that the proof for the first summand
is not uniform on $f$, and our $\delta$ really depends on $f$,
because our functions are only $C^1$, not $C^2$, so 
the maps $x \mapsto J_f(x)$ and $x \mapsto (J_f(x))\iv$ are continuous 
but necessarily Lipschitz.
But if we worked with $C^2$ functions, then our $\Delta$ would
need to use the second derivatives, so we would have the same problem
one level up.

\begin{proofof}{Lemma \ref{lemma-ccc}}
We begin with the details of the thinning argument.
We start with $p_\alpha =
(\sigma_\alpha, h_\alpha, \varkappa_\alpha, \Ups_\alpha)$,
for $\alpha < \omega_1$.  with $m_\alpha = \dom(\sigma_\alpha)$.  Then,

\begin{enum}
\item\label{thinA}
WLOG, all $m_\alpha \ge 4$, and all $\| \sigma_\alpha\| \ge 1$,
and all $p_\alpha$ are nice.
\item\label{thinB}
WLOG:  
all $\Ups_\alpha$  are the same $\Ups$;
and
all $\varkappa_\alpha$ are the same $\varkappa$;
so $p_\alpha = (\sigma_\alpha, h_\alpha, \varkappa, \Ups)$;
and all $|\sigma_\alpha| = t \ge 1$.
Let $m = \dom(\Ups) \ge 4$, and let
$\sigma_\alpha = \{(d_\alpha^i, e_\alpha^i) : i < t\}$.
\item\label{thinD}
$\hat \theta \in (\pi/2, \theta)$, and WLOG all $p_\alpha \in \PPP^{\hat \theta}$
and all $(h_\alpha + h_\beta)/2 \in \FF_{\hat \theta}$.
\item\label{thinE}
WLOG: there is a fixed $r > 0$ such that each
$h_\alpha$ is a translation on each $B( d_\alpha^i, r)$;
so $h_\alpha(x) = x + e_\alpha^i - d_\alpha^i $ whenever
$\| x - d_\alpha^i \| \le r$; hence also
$h\iv_\alpha(y) = y + d_\alpha^i - e_\alpha^i $ whenever
$\| y - e_\alpha^i \| \le r$.
\item\label{thinG}
WLOG:
there is some fixed rational $\varepsilon > 0$ such that
$\mu(Z^{h_\alpha}_\ell) < \Ups(\ell) - \varepsilon$ and
$\mu(Z^{h_\alpha\iv}_\ell) < \Ups(\ell) - \varepsilon$
holds for each $\alpha$ whenever $3 \le \ell  < m$,
and $\sum \{ \ell \Ups(\ell) :
3 \le \ell <  m \} < 1 -  \varepsilon$, and
$Z^{h_\alpha}_{[m - 1 -\varepsilon, \infty)} = 
Z^{h_\alpha\iv}_{[m - 1 -\varepsilon, \infty)} = \emptyset$.
\item\label{thinH}
$\sigma = \{ (d^0, e^0 ), \ldots, (d^{t-1} , e^{t-1} ) \}$
is a condensation point of $\{\sigma_\alpha : \alpha < \omega_1\}$
(considering these $\sigma_\alpha$ as points in $(\RRR^n)^{2t}$),
and $h$ is a condensation point of $\{h_\alpha : \alpha < \omega_1\}$
(with respect to the metric $\Delta$).
Also, $\sigma \in \PPP_0^{\hat \theta}$ and
$h \in \FF_{\hat \theta}$ and
$\mu(Z^{h}_\ell) < \Ups(\ell) - \varepsilon$ and
$\mu(Z^{h\iv}_\ell) < \Ups(\ell) - \varepsilon$
whenever $3 \le \ell  < m$, and
$Z^{h}_{[m - 1 -\varepsilon, \infty)} = 
Z^{h\iv}_{[m - 1 -\varepsilon, \infty)} = \emptyset$.
\item\label{thinF}
 WLOG:
$ \|d^i - d^j\| > 8\pi r / (\theta-\hat\theta)$ and
$ \|e^i - e^j\| > 8\pi r / (\theta-\hat\theta)$ whenever $i \ne j$,
and $\mu(B(\vec 0, r)) < \varepsilon/(2t)$.
Also, $r < \varkappa/8$.
\item\label{thinI}
$\nu$ is small enough so that
for all $g \in \FF_\theta$, if
$ \Delta(g,h) < \nu$ then
$ \mu( Z^g_\ell ) < \mu( Z^h_\ell) + \varepsilon/2$ and
$\mu( Z^{g\iv}_\ell ) < \mu( Z^{h\iv}_\ell) + \varepsilon/2$
whenever $3 \le \ell  < m$.
Also, for all such $g$,
$ Z^{g}_{[m - 1 -\varepsilon/2, \infty)}  = \emptyset$ and
$ Z^{g\iv}_{[m - 1 -\varepsilon/2, \infty)}  = \emptyset$.
Also, $\nu < \varkappa/8$.
\item\label{thinJ}
$\WW$ is an open neighborhood of $h$ in $\FF_{\hat \theta}$,
and $\forall f,g \in \WW\, [\Delta( h, (f + g) / 2 ) < \nu ] $, 
and WLOG all $h_\alpha \in \WW$.
\item\label{thinK}
Let $r_4 = r/2$, and choose $r_0 \in (0, r_4)$ so that
$r_4/r_0 > e^{ 5 / (\theta - \hat\theta)}[2 \pi/ (\theta - \hat\theta)]$.
WLOG,
$\| d_\alpha^i - d^i \| < r_0/8$ and $\| e_\alpha^i - e^i \| < r_0/8$
for all $\alpha, i$.
\end{enum}

To justify some of these steps:

For \pref{thinA}:  use the facts that $\{p : m_p \ge 4\}$ is dense
(Lemma \ref {lemma-dense-m}), and 
$\{p : |\sigma_p|  \ge 1\}$ is dense
(e.g., by Lemma \ref{lemma-dense-de}),
and the nice $p$ are dense (Lemma \ref{lemma-nice-are-dense}).

For \pref{thinG}: 
Note that $\sup_x \det J_{h_\alpha}(x) = \max_x \det J_{h_\alpha}(x) < m-1$,
using Definition \ref{def-poset} and the fact that
$\det J_{h_\alpha}(x) = 1$ outside a bounded set.

For \pref{thinH}: use separability of the spaces involved.
To ensure that $\sigma \in \PPP_0^{\hat \theta}$ and
$h \in \FF_{\hat \theta}$, etc., we may
take $\sigma$ to be one of the $\sigma_\alpha$
and take $h$ to be one of the $h_\alpha$.

For \pref{thinF}: shrink $r$ if necessary.

For \pref{thinI}, see Lemma \ref{lemma-basic-Z}, and
for \pref{thinJ}, see Lemma \ref{lemma-bound-inv}.
Regarding getting $ Z^{g}_{[m - 1 -\varepsilon/2, \infty)}  = \emptyset$:
we have $\forall x \, [\det J_h(x) < m -1 - \varepsilon]$, so if
$\|J_g -  J_h\|$ is small enough, we'll have
$\forall x \, [\det J_g(x) < m -1 - \varepsilon/2]$.

We remark that the $r_0, r_4$ in \pref{thinK} corresponds
to the $r_0, r_4$ in Lemma \ref{lemma-step-1-2}.

Now, to verify the ccc, 
fix $\alpha\ne \beta$ such that
$\sigma_\alpha$ and $\sigma_\beta$ are compatible in 
$\PPP_0^{\hat \theta}$.
Then $\sigma := \sigma_\alpha \cup \sigma_\beta \in \PPP_0^{\hat \theta}$.
We show that $p \compat q$ (in $\PPP^{\theta}$) by constructing
a $q \in \PPP^{\theta}$ such that $q \le p_\alpha$ and $q \le p_\beta$.
Let $\sigma_q = \sigma$.  Let $\hat h = (h_\alpha + h_\beta)/2$.
We must modify $\hat h$ to obtain $h_q$.
To do this, we apply Lemma \ref{lemma-step-1-2} $t$ times.

Let
$\hat d^i = (d_\alpha^i + d_\beta^i)/2 $ and
$\hat e^i = (e_\alpha^i + e_\beta^i)/2 $.  Then
$\hat h(\hat d^i) = \hat e^i$, and $\hat h$ is translation,
$\hat h(x) = x + \hat e^i - \hat d^i$, mapping
$B(\hat d^i, r_4)$ onto $B(\hat e^i, r_4)$.
Also, by
\pref{thinI}\pref{thinJ},
$ \Delta( h, \hat h ) < \nu $ and
$ \mu( Z^{\hat h}_\ell ) < \mu( Z^h_\ell) + \varepsilon/2$ and
$\mu( Z^{{\hat h}\iv}_\ell ) < \mu( Z^{h\iv}_\ell) + \varepsilon/2$
whenever $3 \le \ell  < m$.
Hence, 
$ \mu( Z^{\hat h}_\ell ) <  \Ups(\ell) - \varepsilon/2$ and
$\mu( Z^{{\hat h}\iv}_\ell ) <  \Ups(\ell) - \varepsilon/2$

We also have 
$\mu( Z^{\hat h}_{[m-\varepsilon, \infty)} ) < \varepsilon/2$ and
$\mu( Z^{\hat h\iv}_{[m-\varepsilon, \infty)} ) < \varepsilon/2$.

We let $h_q(x) = \hat h(x)$ for $x \notin \bigcup_i B(\hat d^i, r_4)$.
For each $i$, 
$h_q\res B(\hat d^i, r_4)$ is obtained from
$\hat h\res B(\hat d^i, r_4)$ by one application of Lemma \ref{lemma-step-1-2}
(temporarily
changing coordinates and assuming that $\hat d^i = \hat e^i = \vec 0$).
Now that we have $h_q$, we must verify that 
$\angle(  \vec x_1  - \vec x_0 ,  h_q(\vec x_1)  - h_q(\vec x_0)) < \theta$.
This can only be a problem if 
$\vec x_0  \in B(\hat d^i, r_4)$ and
$\vec x_1  \in B(\hat d^j, r_4)$ for $i \ne j$.
$\| d_\alpha^i - d^i \| < r_0/8$ and
$\| d_\beta^i - d^i \| < r_0/8$, so
$\| \hat d^i - d^i \| < r_0/8$.
Thus, 
$\| x_0 - d^i \| < r$, and likewise $\| x_1 - d^j \| < r$, while
$ \|d^i - d^j\| > 8\pi r / (\theta-\hat\theta)$.
Applying Lemma \ref{lemma-ball}, we conclude that
$\beta := \angle( d^j - d^i, x_1 - x_0) \le (\theta - \hat \theta) / 8$
(so we are done by using $\tw(\sigma)  < \hat \theta$).
Here, the ``distance'' $T = \|d^i - d^j\| > 8\pi r / (\theta-\hat\theta)$,
and the two radii are $< r$, so  Lemma \ref{lemma-ball} says that
$\beta \le \pi \cdot 2r \div  16\pi r / (\theta-\hat\theta)   $.

Finally, we choose $\varkappa_q$ and $\Ups_q$
using the method of proof of Lemma \ref{lemma-smallchange};
see also the corresponding argument in the proof of Lemma \ref{lemma-main-1}.

For $q \le p$, we need
$\varkappa_q \le \varkappa$
and
$B(h_q, \varkappa_q) \subseteq
B(h_\alpha, \varkappa) \cap B(h_\beta, \varkappa)$,
and these are satisfied if we choose
$\varkappa_q < \varkappa - \max( d(h_q, h_\alpha), d(h_q, h_\beta) )$;
this number is positive by \pref{thinF}\pref{thinI}\pref{thinJ}.

Also, for $q$ to be in $\PPP^\theta$, we are required
to choose $m_q \ge  m$ so that
$1/ (m_q -1) <   \det J_{h_q}(x) < ( m_q -1)$ for all $x$;
then, for $m \le \ell < m_q$, we need to choose $\Ups_q(\ell)$ to satisfy:
$\sum \{ \ell   \Ups_q(\ell) : \ell \ge 3 \ \&\ \ell < m_q \} < 1$,
as well as 
$\mu(Z^{h_{q}}_\ell) < \Ups_q(\ell)$ and
$\mu(Z^{h_{q}\iv}_\ell) < \Ups_q(\ell)$ 
whenever $3 \le \ell  < m_q$.
When $\ell < m$, this is guaranteed by \pref{thinI}.
When $m \le \ell < m_q$, we use \pref{thinI}\pref{thinJ} to bound
$\mu(Z^{\hat h}_\ell)$ and $\mu(Z^{\hat h\iv}_\ell)$,
and then we use \pref{thinF} plus the fact that
$\hat h$ and $h_q$ agree outside a set of measure
no more than $t \cdot \mu(B(\vec 0, r)) < \varepsilon/2$.
\end{proofof}

Observe that in building $h_q$ from $\hat h$, we lose any bound
that we had on the Jacobians; in particular, 
$d(h_q,  \hat h)$ is small but $\Delta(h_q,  \hat h)$ isn't.

\section{Examples and Remarks}
\label{sec-examples}
We provide here the examples mentioned in the previous sections.

The following shows that
the ``$\theta > \pi/2$'' in Proposition \ref{prop-intro}
cannot be replaced by ``$\theta \ge \pi/2$'':

\begin{example}
\label{ex-twistlimit}
There are $\aleph_1$--dense $D,E \subseteq \RRR^2$
such that no bijection $f : D \to E$ satisfies $\tw(f) \le \pi/2$.
\end{example}
\begin{proof}
Let $E = \widehat E \times \widehat E$, where
$\widehat E$ is an $\aleph_1$--dense subset of $\RRR$.
Let $D \subseteq \RRR^2$  be any 
$\aleph_1$--dense set of the form
$\bigcup_{n \in \omega} \widehat D_n \times \{y_n\}$,
where each $\widehat D_n \subseteq \RRR$.

Now, fix a 1-1 function  $f : D \to E$ with $\tw(f) \le \pi/2$, and we shall
show that $f$ is not onto.  For this, it is sufficient to show that for each
$n \in \omega$, there is a countable $A_n \subseteq \widehat E$
such that $| ( f( \widehat D_n \times \{y_n\})\, )_t |  \le 1$
for all $t \in \widehat E \backslash A_n$;
here, $(X)_t = \{ u : (t,u) \in X \}$.

Fix $n$.  For $x \in \widehat D_n$, let
$f(x,y_n) = (g_n(x), h_n(x))$, where $g_n, h_n : \widehat D_n \to \widehat E$.
Then $g_n : \widehat D_n \to \RRR$ is non-decreasing
(using $\tw(f) \le \pi/2$), so each $g_n\iv \{t\}$ is a convex
subset of $\widehat D_n$, so 
$A_n := \{t : | g_n\iv \{t\} | \ge 2 \}$ is countable.
If $t \in \widehat E \backslash A_n$, then there is at most
one $x$ such that $g_n(x) = t$,
which implies that
$| ( f( \widehat D_n \times \{y_n\})\, )_t |  \le 1$.
\end{proof}

\begin{example}
\label{ex-no-op}
In Example \ref{ex-twistlimit}, $D$ and $E$ can be taken so that
the two coordinate projections 
$\pi_0$ and $\pi_1$ are both 1-1 on $D$ and on $E$.
Note that no bijection $f : D \to E$ is order-preserving on each
coordinate \textup(i.e., $\pi_i(d') < \pi_i(d) $ iff
$\pi_i(f(d')) < \pi_i(f(d)) $ for $i = 0,1$\textup).
\end{example}
\begin{proof}
To get $D,E$, start with $D_0, E_0$ satisfying
Example \ref{ex-twistlimit}, and obtain $D,E$ by rotating $D,E$ by
some angle $\alpha$ chosen to make $\pi_0, \pi_1$ 1-1.  Such an 
$\alpha$ obviously exists under $\neg \CH$, but in any case, it is
easy to choose $\widehat E$ and the  $\widehat D_n$ and the
$y_n$ in the proof so that $\alpha = 40^\circ$ works.

For the ``note that'', observe that if 
$\angle(d' - d, e' - e) \ge \pi/2$, then
$d' - d$ and $e' - e$ lie in different quadrants.
\end{proof}

We next point out that 
Proposition \ref{prop-intro}, and hence also Theorem \ref{thm-main},
cannot be proved from $\MA + \cccc = \aleph_2$ alone:

\begin{example}
\label{ex-entangled}
It is consistent with $\MA + \cccc = \aleph_2$ that there
are $\aleph_1$--dense
$D,E \subset \RRR^2$ such that $\pi \in \twist(f)$ whenever
$f$ is a bijection from $D$ onto $E$.
\end{example}
\begin{proof}
Work in a model of $\MA + \cccc = \aleph_2$ in which there is
a 2-entangled subset of $\RRR$ of size $\aleph_1$
(see \cite{ARS,AS}), and partition this set into disjoint
pieces $A_q$ and $B_q$ for $q \in \QQQ$.  We may assume that all
$A_q$ and $B_q$ are $\aleph_1$--dense in $\RRR$.

Then, let $D = \bigcup_q A_q \times \{q\}$ and
$E = \bigcup_q B_q \times \{q\}$.  Say $f: D \to E$ is a bijection.
Then fix $q,r \in \QQQ$ and
$\widehat A \in [A_q]^{\aleph_1}$ and
$\widehat B \in [B_q]^{\aleph_1}$ and a bijection
$g : A \to B$ such that the map $(x,q) \mapsto (g(x), r)$
is a sub-function of $f$.
By entangledness, $g$ is not order-preserving, so choose $a < a'$ in $A$
such that $g(a) > g(a')$. 

If $d = (a,q)$ and $d' = (a',q)$ then
$\angle(d' - d, f(d') - f(d)) = \pi$.
\end{proof}

It is easy to modify Examples
\ref{ex-twistlimit}, \ref{ex-no-op}, and \ref{ex-entangled} to
replace $\RRR^2$ by $\RRR^n$ for any $n \ge 2$.

\begin{question}
Forcing with $\PPP^\theta_0$, with $\theta \in (\pi/2, \pi)$,
are 
$\{p : d \in \dom(p)\}$ and $\{p : e \in \ran(p)\}$ dense for
all $d \in D$ and $e \in E$?
\end{question}

If the answer is ``yes'', then we could dispense with the side conditions
in the proof of Proposition \ref{prop-intro}, resulting in a much simpler
proof, but we needed the side conditions anyway in the proof
of Theorem \ref{thm-main} to ensure that the
generic function is BAC.

The interest of this question for forcing is only when $\theta > 90^\circ$,
but a simple example in the plane shows that the answer is ``no''
with $\theta = 18^\circ$:
Let $p = \{(d_i, e_i) : i < 3\}$, where
$d_0 = (0,10)$,
$e_0 = (0,-9)$,
$d_1 = e_1 = (0,-10)$, and
$d_2 = e_2 = (0,11)$.
Then $\tw(p) = 0$, so $p \in \PPP^\theta$.
Let $d = (10,0)$ and suppose that
$p \cup \{(d,e)\} \in \PPP_0^{\theta}$.
Let $e = (x,y)$.
The requirements $\angle( d - d_0, e - e_0) \le 18^\circ$ and
$\angle( d - d_1, e - e_1) \le 18^\circ$ imply that
$0 \le x \le 1$ and $-10 \le y \le -9$.
But then we have
$\angle( d - d_2, e - e_2) \ge
\angle( (10,0) - (0,11),\; (1,-9) - (0,11)) \approx 39^\circ$.

\end{document}